\title{Representation stability\\ for the cohomology of the moduli space $\mathcal{M}_{g}^n$}
\author{Rita Jimenez Rolland}
\address{Department of Mathematics,
University of Chicago,
%5734 University Ave.
Chicago, IL 60637}
\email{atir83@math.uchicago.edu}
\urladdr{http://www.math.uchicago.edu/~atir83/}
\newtheorem{definition}{Definition}
\newtheorem{theo}{Theorem}[section]
\newtheorem*{rtheo1}{Theorem 1.1}
\newtheorem*{rtheo2}{Theorem 1.7}
\newtheorem*{rcor4}{Corollary 1.3}
\newtheorem{cor}[theo]{Corollary}
\newtheorem{lemma}[theo]{Lemma}
\newtheorem{prop}[theo]{Proposition}
\DeclareMathOperator{\Mod}{Mod}
\DeclareMathOperator{\PMod}{PMod}
\DeclareMathOperator{\Push}{Push}
\DeclareMathOperator{\Diff}{Diff}
\DeclareMathOperator{\PDiff}{PDiff}
\DeclareMathOperator{\Aut}{Aut}
\newcommand{\BPMod}{B\PMod}
\newcommand{\BMod}{B\Mod}
\newcommand{\BDiff}{B\Diff}
\newcommand{\BPDiff}{B\PDiff}
\begin{document}
\begin{abstract}
Let $\mathcal{M}_{g}^n$ be the moduli space of  Riemann surfaces of genus $g$ with $n$ labeled marked points. %In general, the sequence $\{\mathcal{M}_{g}^n\}_{n=1}^{\infty}$ does not satisfy (co)homological stability, even rationally. It is known for example that dim $H^2(\mathcal{M}_{g}^n;\mathbb{Q})=n+1$ for $g\geq 4$.  
We prove that, for $g\geq 2$, the cohomology groups $\{H^i(\mathcal{M}_{g}^n;\mathbb{Q})\}_{n=1}^{\infty}$ form a sequence of $S_n$-representations which is representation stable in the sense of Church-Farb \cite{CHURCH_FARB}. In particular this result applied to the trivial $S_n$-representation implies rational ``puncture homological stability'' for the mapping class group $\Mod_g^n$. % This has been recently proved  for $\Mod_{g,r}^n$ (with $r\geq1$)  by Hatcher-Wahl  \cite{HATCHER_WAHL} with integral coefficients.
We obtain representation stability for sequences $\{H^i(\PMod^n(M);\mathbb{Q})\}_{n=1}^{\infty}$, where $\PMod^n(M)$ is the mapping class group of many connected manifolds $M$ of dimension  $d\geq 3$ with centerless fundamental group; and for sequences $\{H^i\big(\BPDiff^n(M);\mathbb{Q}\big)\}_{n=1}^{\infty}$, where $\BPDiff^n(M)$ is the classifying space of the subgroup $\PDiff^n(M)$ of diffeomorphisms of $M$ that fix pointwise $n$ distinguished points in $M$. 
\end{abstract}
\maketitle
\section{Introduction}

\textbf{Notation:} 
Let $\Sigma_{g,r}$ be a compact orientable surface of genus $g\geq 0$ with $r\geq 0$ boundary components and let $p_1,\ldots, p_n$ be distinct points in the interior of $\Sigma_{g,r}$. The {\it mapping class group}  $\Mod_{g,r}^n$ is the group of isotopy classes of orientation-preserving self-diffeomorphisms of $\Sigma_{g,r}^n:= \Sigma_{g,r}-\{p_1,\ldots, p_n\}$ that restrict to the identity on the boundary components. The {\it pure mapping class group} $\PMod_{g,r}^n$ is defined analogously by asking that the punctures remain fixed pointwise. If $r=0$ or $n=0$, we omit it from the notation.%\bigskip
 
The homology groups of the pure mapping class group $\PMod_{g}^n$ are of interest (among other reasons) due to their relation with the topology of the moduli space $\mathcal{M}_{g}^n$  of genus $g$ Riemann surfaces with $n$ labeled marked points (i.e. $n$-pointed  non-singular projective curves of genus $g$). The space $\mathcal{M}_{g}^n$  is a rational model for the classifying space $\BPMod_{g}^n$ for $g\geq 2$. Hence
\begin{equation}\label{moduli}
H^*(\mathcal{M}_{g}^n;\mathbb{Q})\approx H^*(\PMod_{g}^n;\mathbb{Q}).
\end{equation}
 We refer the  reader to \cite{FARBMARG}, \cite{HAINLOO}, \cite{KIRWAN} and \cite{HARERmoduli} for more about the relation between $\mathcal{M}_{g}^n$ and $\PMod_g^n$.

One basic question is to understand how, for a fixed $i\geq 0$, the cohomology groups $H^i(\PMod_{g,r}^n;\mathbb{Q})$ change as we vary the parameters $g$, $r$ and $n$, in particular when the parameters are very large with respect to $i$. 
It is a classical result by Harer \cite{HARER} that the group $H^i(\PMod_{g,r}^n;\mathbb{Z})$ depends only on $n$ provided that $g$ is large enough. The major goal of this paper is to understand how the cohomology $H^i(\PMod_{g,r}^n;\mathbb{Q})$ changes  as we vary the number of punctures $n$.

\subsection*{Genus and puncture homological stability}\label{Motiv}
It is known that the groups $\PMod_{g,r}^n$ and  $\Mod_{g,r}^n$ satisfy ``genus homological stability'': 

\begin{center}
\textit{For fixed $i,n\geq 0$ the homology groups  $H_i(\PMod_{g,r}^n;\mathbb{Z})$ and $H_i(\Mod_{g,r}^n;\mathbb{Z})$ }
\textit{do not depend on the parameters $g$ and $r$, for $g\gg i$.}
\end{center}
This was first proved in the $1980$'s  by Harer \cite{HARER}  and the stable ranges have been improved since then by the work of several people (see Wahl's survey \cite{WAHL}).

An additional stabilization map can be defined by increasing the number of punctures. In the case of surfaces with non-empty boundary, we can consider a map $\Sigma_{g,r}^n\rightarrow \Sigma_{g,r}^{n+1}$ by gluing a punctured cylinder to one of the boundary components of $\Sigma_{g,r}^n$. This map gives a homomorphism $$\mu_n\colon \Mod_{g,r}^n\rightarrow \Mod_{g,r}^{n+1}.$$ In \cite[Proposition 1.5]{HATCHER_WAHL}, Hatcher and Wahl proved that the map $\mu_n$ induces an isomorphism in $H_i(-;\mathbb{Z})$ if $n\geq 2i+1$ (for fixed $g\geq 0$ and $r>0$). Puncture stability for closed surfaces follows, as it is known that 
\begin{equation*}
H_i(\Mod_{g,1}^n;\mathbb{Z})\approx H_i(\Mod_{g}^n;\mathbb{Z})\text{ for }g \geq \frac{3}{2}i
 \end{equation*}
(see \cite[Theorem 1.2]{WAHL}). Handbury proved  this ``puncture homological stability'' for non-orientable surfaces in \cite{HANDBURY} with techniques that can also be applied to the orientable case. When the surface is a punctured disk this is Arnold's classical stability theorem for the cohomology of braid groups $B_n$ \cite{ARNOLD}.
Together, puncture and genus stability imply that the homology of the mapping class group of an orientable surface stabilizes with respect to connected sum with any surface. 

On the other hand, for the pure mapping class groups, attaching a punctured cylinder to $\Sigma_{g,r}^n$ also induces homomorphisms $$\mu_n\colon \PMod_{g,r}^{n}\rightarrow \PMod_{g,r}^{n+1},$$ when $r>0$. Hence we can ask whether $\PMod_{g,r}^n$ satisfies or not puncture homological stability. 

The homology groups of $\PMod_{g,r}^n$ are largely unknown, apart from some low dimensional cases such as: 
\begin{equation*}
H_1(\PMod_{g,r}^n;\mathbb{Z})=0\text{ for } g\geq 3
\end{equation*}
(see \cite[Theorem 5.2]{FARBMARG} for a proof). Furthermore, 
\begin{equation*}
H_2(\PMod_{g,r}^n;\mathbb{Z})\approx H_2(\Mod_{g,r+n};\mathbb{Z})\oplus \mathbb{Z}^{n}\text{ for }g\geq 3
\end{equation*}
(this is \cite[Corollary 4.5]{KORKMAZ}, but the original computation for $g\geq 5$ is due to Harer \cite{HARERSecond}). 

Even if the case of the first homology group is not representative, we notice that the rank of $H_2(\PMod_{g,r}^n;\mathbb{Z})$ blows up as $n\rightarrow +\infty$. Moreover,  the pure braid groups $P_n\approx\PMod_{0,1}^n$ fail in each dimension $i\geq 1$ to satisfy homological stability \cite[Section 4]{CHURCH_FARB}.  This suggests to us the failure of puncture homological stability in the general case.

For large $g$,  B{\"o}digheimer and Tillmann  results in \cite{TILLMANN}, combined with Madsen-Weiss, give explicit calculations, although we do not discuss them in this paper.

\subsection*{Main result}
We want to compare $H^i(\PMod_{g,r}^{n};\mathbb{Q})$ as the number of punctures $n$ varies. The natural inclusion $\Sigma_{g,r}^{n+1}\hookrightarrow \Sigma_{g,r}^{n}$ induces the \textit{forgetful map} $$f_n\colon\PMod_{g,r}^{n+1}\rightarrow \PMod_{g,r}^{n}.$$
Notice that $f_n$ is a left inverse for the map $\mu_n$ above, when $r>0$, but can be defined even for surfaces without boundary. 
This map allows us to relate the corresponding cohomology groups:
$$f_n^*\colon H^*(\PMod_{g,r}^{n};\mathbb{Q})\rightarrow H^*(\PMod_{g,r}^{n+1};\mathbb{Q}).$$
Observe that $f_n^*$ is also induced by the \textit{forgetful morphism} between moduli spaces $\mathcal{M}_{g}^{n+1}\rightarrow \mathcal{M}_{g}^{n}$.

The key idea is to consider the natural action of the symmetric group $S_n$ on $\mathcal{M}_{g}^n$ given by permuting the $n$ labeled marked points. Thus we can regard $H^i(\mathcal{M}_{g}^n;\mathbb{Q})$ as rational $S_n$-representations and compare them through the maps $f_n^i$. Moreover, we notice that the map $f_n^i$ is equivariant with respect to the standard inclusion $S_n\hookrightarrow S_{n+1}$. In Section \ref{H_2} below we explicitly compute the $S_n$-representation $H^2(\PMod_{g}^n;\mathbb{Q})$ and its decomposition into irreducibles.

Roughly speaking, we say that a sequence of $S_n$-representations $\{V_n\}$ with linear maps $\phi_n\colon V_n\rightarrow V_{n+1}$ equivariant with respect to $S_n\hookrightarrow S_{n+1}$  is \textit{representation stable} if for sufficiently large $n$ the following conditions hold: the maps $\phi_n$ are injective; the image  $\phi_n(V_n)$ generates  $V_{n+1}$  as an $S_{n+1}$-module, and the decomposition of $V_n$ into irreducibles can be described independently of $n$.  This notion was introduced by Church-Farb in \cite{CHURCH_FARB}. The precise definition of representation stability is stated in Section \ref{Subsec1} below. 

 Hence, instead of asking if  $f_n^i$ is an isomorphism or not (puncture cohomological stability), we consider the question of whether the cohomology groups of the pure mapping class group satisfy representation stability. In \cite[Theorem 4.2]{CHURCH_FARB} Church-Farb prove that the sequence $\{H^i(P_n;\mathbb{Q}), f_n^i\}_{n=1}^{\infty}$ is representation stable.  Our main result shows that this is also the case for the pure mapping class group.

\begin{theo}\label{MAIN}
 For any $i\geq 0$ and $g\geq 2$ the sequence of cohomology groups $\{H^i(\PMod_g^n;\mathbb{Q})\}_{n=1}^{\infty}$ is monotone and uniformly representation stable with stable range $$n\geq \text{min }\{ 4i+2(4g-6)(4g-5), 2i^2+6i\}.$$
\end{theo}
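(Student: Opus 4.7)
The plan is to analyze the Birman fibration
\[
\text{PConf}_n(\Sigma_g) \longrightarrow B\PMod_g^n \longrightarrow B\PMod_g
\]
via its rational Leray--Serre spectral sequence
\[
E_2^{p,q}(n) = H^p\bigl(\PMod_g;\, H^q(\text{PConf}_n(\Sigma_g);\mathbb{Q})\bigr) \Longrightarrow H^{p+q}(\PMod_g^n;\mathbb{Q}).
\]
The forgetful map $f_n\colon \PMod_g^{n+1}\to \PMod_g^n$ is induced by a map of such fibrations covering the identity on the base and the natural projection $\text{PConf}_{n+1}(\Sigma_g)\to \text{PConf}_n(\Sigma_g)$, so every page of the spectral sequence carries a compatible $S_n$-action and the comparison maps between successive $n$'s are equivariant with respect to $S_n\hookrightarrow S_{n+1}$.

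The first ingredient is Church's theorem that for a connected orientable manifold $M$ of finite type with $\dim M \geq 2$, the sequence $\{H^q(\text{PConf}_n(M);\mathbb{Q})\}_n$ is monotone and uniformly representation stable with a linear-in-$q$ stable range; applied to $\Sigma_g$, this controls the fiber cohomology. The monodromy of the fibration is the action of $\PMod_g$ on $H^q(\text{PConf}_n(\Sigma_g);\mathbb{Q})$ coming from its action on $\Sigma_g$, and this commutes with the permutation action of $S_n$ on the labels. The next step is a functoriality principle: if $\{V_n\}$ is a monotone uniformly representation stable sequence of $\mathbb{Q}[G\times S_n]$-modules and $G$ has finite rational cohomological dimension $D$, then $\{H^p(G;V_n)\}_n$ is representation stable, with a range that degrades by an amount controlled by $D$ and $p$. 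One proves this by computing $H^p(G;V_n)$ via a fixed projective resolution of $\mathbb{Q}$ over $\mathbb{Q}[G]$, which reduces the statement to closure of representation stability under kernels and cokernels. For $G=\PMod_g$ one has $D\leq 4g-5$ by Harer's theorem, and this is the origin of the $(4g-5)$-factor in the stable range; the outcome is representation stability of the entire $E_2$-page.

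Passing from $E_2$ to $E_\infty$ then proceeds one page at a time: each differential $d_r$ is an $S_n$-equivariant map between representation stable sequences, so its kernel and cokernel stay representation stable, and the resulting extension problems for $E_\infty$ stay inside the class because representation stability is closed under extensions with stable ranges adding. The bound on the base's cohomological dimension limits both how many pages can support nonzero differentials touching total degree $i$ and how many filtration quotients appear, and the accumulated loss of range produces the linear bound $4i+2(4g-6)(4g-5)$, with the quadratic factor $(4g-6)(4g-5)$ reflecting loss in both spectral-sequence directions. The alternative bound $2i^2+6i$ comes from applying Church's range directly along the total-degree-$i$ strip, and is sharper once $g$ is large relative to $i$. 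Monotonicity of $\{H^i(\PMod_g^n;\mathbb{Q})\}_n$ is carried through the same machinery: monotonicity on $E_2$ is inherited from the monotone version of Church's theorem and from the monotone version of the functoriality lemma, and monotonicity is preserved under subquotients and extensions. The main obstacle is quantitative rather than conceptual: controlling the two sources of range degradation (the $G$-cohomology step and the spectral sequence filtration) tightly enough to obtain a bound that is \emph{linear} in $i$ requires a careful double induction on $r$ and on $p$, and this bookkeeping is where the delicate part of the argument lives.
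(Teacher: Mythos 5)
Your overall architecture is the paper's: the Birman exact sequence realizing $\pi_1(C_n(\Sigma_g))$ inside $\PMod_g^n$ with quotient $\Mod_g$, the associated spectral sequence with $E_2^{p,q}(n)=H^p\bigl(\Mod_g;H^q(\pi_1(C_n(\Sigma_g));\mathbb{Q})\bigr)$, Church's configuration-space theorem for the coefficients, a ``changing coefficients'' lemma to handle $H^p(\Mod_g;-)$, and an $S_n$-equivariant page-by-page induction using monotonicity, finishing with the filtration of $H^i$. However, your statement of the key coefficients lemma contains a genuine error, in both hypothesis and conclusion. The correct hypothesis is that $G$ be of type $FP_\infty$, not merely of finite rational cohomological dimension: the proof takes a free resolution of finite type and uses the isomorphism of $S_n$-representations $\mathcal{H}om_G(E_p,V_n)\cong V_n^{\oplus d_p}$, which requires each $E_p$ to be finitely generated free (finite cd alone does not even guarantee finite-dimensionality of $H^p(G;V_n)$). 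And the correct conclusion is that the stable range is preserved \emph{exactly} -- there is no degradation ``controlled by $D$ and $p$'' -- because kernels and images of consistent maps between monotone, uniformly stable sequences are stable with the same range.

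This matters quantitatively, because you attribute the $(4g-5)$-factor to that (nonexistent) degradation. In the actual argument the $E_2$-page is stable with exactly Church's range $n\geq 4q$, and the genus-dependent term $2(4g-6)(4g-5)$ arises solely from the spectral-sequence induction: passing from page $r$ to $r+1$ costs range because the incoming differential originates in row $q+r-1$, giving $n\geq 4q+2(r-1)(r-2)$ on page $r$, and Harer's bound $\mathrm{vcd}(\Mod_g)=4g-5$ forces degeneration at page $R=4g-4$, whence $2(R-1)(R-2)=2(4g-5)(4g-6)$; the alternative bound $2i^2+6i$ comes from the fact that terms of total degree $i$ are already stable at page $i+2$. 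If the $E_2$ step itself lost range as you claim, the bookkeeping would not produce the stated bounds. Two smaller points: when assembling the filtration, ranges combine by taking the maximum (via the monotonicity propositions), not by ``adding''; and monotonicity is not an afterthought but the engine of the whole induction, since representation stability alone is not closed under kernels, cokernels, or extensions with controlled range.
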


 Our arguments work for hyperbolic non-closed surfaces (Theorem \ref{BOUNDARY}). Hence Harer's homological stability and our main theorem imply that, as an $S_n$-representation, $H^i(\PMod_{g,r}^n;\mathbb{Q})$ is independent of $g$, $r$ and $n$, provided $n$ and $g$ are large enough.

By (\ref{moduli}), Theorem \ref{MAIN} can be restated as follows.

\begin{cor}[Representation stability for the cohomology of the moduli space $\mathcal{M}_{g}^{n}$]\label{MAINModuli}
 For any $i\geq 0$ and $g\geq 2$ the sequence of cohomology groups $\{H^i(\mathcal{M}_{g}^{n};\mathbb{Q})\}_{n=1}^{\infty}$ is monotone and uniformly representation stable with stable range $$n\geq \text{min }\{ 4i+2(4g-6)(4g-5), 2i^2+6i\}.$$
\end{cor}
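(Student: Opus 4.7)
The plan is to derive Corollary \ref{MAINModuli} as a direct consequence of Theorem \ref{MAIN} together with the comparison (\ref{moduli}) between the rational cohomology of $\mathcal{M}_g^n$ and that of $\PMod_g^n$. First I would recall why, for $g\geq 2$, the space $\mathcal{M}_g^n$ is a rational $K(\PMod_g^n,1)$: it arises as the orbifold quotient of the contractible Teichm\"uller space $\mathcal{T}_g^n$ by the properly discontinuous, virtually torsion-free action of $\PMod_g^n$, which supplies a natural rational cohomology isomorphism $H^i(\mathcal{M}_g^n;\mathbb{Q})\cong H^i(\PMod_g^n;\mathbb{Q})$ in every degree. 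This is precisely (\ref{moduli}).

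Next I would verify that this isomorphism respects all the structure that enters the definition of representation stability. The $S_n$-action on $\mathcal{M}_g^n$ permuting the labels of the marked points corresponds to the outer $S_n$-action on $\PMod_g^n$ coming from the short exact sequence $1\to \PMod_g^n\to \Mod_g^n\to S_n\to 1$, and this is the $S_n$-action used in Theorem \ref{MAIN}. Similarly, the algebraic forgetful morphism $\mathcal{M}_g^{n+1}\to \mathcal{M}_g^n$ (forgetting the last marked point) is equivariant for the standard inclusion $S_n\hookrightarrow S_{n+1}$ and induces on cohomology the same map as the group-theoretic forgetful homomorphism $f_n\colon \PMod_g^{n+1}\to \PMod_g^n$.

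With these compatibilities in place, the sequences $\{H^i(\mathcal{M}_g^n;\mathbb{Q}),f_n^*\}$ and $\{H^i(\PMod_g^n;\mathbb{Q}),f_n^*\}$ become isomorphic as sequences of $S_n$-representations with $S_n$-equivariant transition maps. Consequently every feature of representation stability — the injectivity of $f_n^*$, the $S_{n+1}$-span condition, the $n$-independent description of the irreducible decomposition, together with monotonicity, uniformity, and the explicit stable range $n\geq \min\{4i+2(4g-6)(4g-5),\,2i^2+6i\}$ — transfers verbatim from Theorem \ref{MAIN}. There is no genuine obstacle; the only mildly delicate point is the equivariance bookkeeping in the second paragraph, and this is standard material covered in the references already cited (Farb-Margalit, Hain-Looijenga).
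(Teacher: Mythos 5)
Your proposal is correct and follows the same route as the paper, which deduces the corollary immediately from Theorem \ref{MAIN} via the rational isomorphism (\ref{moduli}) between $H^*(\mathcal{M}_g^n;\mathbb{Q})$ and $H^*(\PMod_g^n;\mathbb{Q})$. The only difference is that you spell out the equivariance of this isomorphism with respect to the $S_n$-actions and the forgetful maps, a compatibility the paper treats as implicit (it notes in the introduction that $f_n^*$ is induced by the forgetful morphism $\mathcal{M}_g^{n+1}\rightarrow\mathcal{M}_g^n$ and that the $S_n$-action comes from permuting marked points).
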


\textbf{Remark:} In \cite[Theorem 1.1]{TILLMANN} B{\"o}digheimer and Tillmann proved that $$B(\PMod ^n_{\infty,r})^+\simeq \BMod_{\infty}^+\times(\Bbb CP^\infty)^{n}.$$
\noindent Together with Harer's homological stability theorem this implies that, in dimensions $*\leq g/2$, 
 $$H^*(\PMod ^n_{g,r};\mathbb{Q}) \approx H^* (\PMod_{g,r};\mathbb{Q}) \otimes \big(H^*(\Bbb CP^\infty;\mathbb{Q})\big)^{\otimes n}$$ $$ \hspace{20mm}\approx H^* (\PMod_{g,r};\mathbb{Q}) \otimes \mathbb{Q}[x_1,\ldots,x_n],$$
\noindent  where each $x_i$ has degree $2$. The action of the symmetric group $S_n$  on the left hand side corresponds to permuting the $n$ factors $\Bbb CP^\infty$. In other words, it is given by the action of $S_n$ on the polynomial ring in $n$ variables by permutation of the variables $x_i$. On the other hand, Church and Farb proved in \cite[Section 7]{CHURCH_FARB} that representation stability holds for the $S_n$-action on the polynomial ring in $n$ variables. Hence B{\"o}digheimer and Tillmann result implies that for $i\leq g/2$ representation stability holds for 
 $\{H^i(\PMod_{g,r}^n;\mathbb{Q})\}_{n=1}^{\infty}$. Notice that this only holds for large $g$ with respect to $i$. In contrast, our Theorem \ref{MAIN} and Theorem \ref{BOUNDARY} give uniform representation stability and monotonicy for arbitrary  $g\geq 0$ such that  $2g+r+s>2$ and large $n$. 

\subsection*{Puncture (co)homological stability for $\Mod_{g}^n$}
Our main result, Theorem \ref{MAIN}, implies cohomological stability for $\Mod_{g}^n$ with twisted rational coefficients (see Section \ref{Conseq}). For any  partition $\lambda$, we denote the corresponding irreducible $S_n$-representation by $V(\lambda)_n$, as we explain in Section \ref{Subsec1} below. A transfer argument gives the proof of the following corollary of Theorem \ref{MAIN}.

\begin{cor}\label{TWISTSTAB}
 For any  partition $\lambda$, the sequence $\{H^i\big(\Mod_g^n;V(\lambda)_n\big)\}_{n=1}^{\infty}$ of twisted cohomology groups satisfies classical cohomological stability: for fixed $i\geq 0$ and $g\geq 2$, there is an isomorphism  $$H^i\big(\Mod_g^n;V(\lambda)_n\big)\approx H^i\big(\Mod_g^{n+1};V(\lambda)_{n+1}\big),$$ if $n\geq\text{min }\{ 4i+2(4g-6)(4g-5), 2i^2+6i\}$.
\end{cor}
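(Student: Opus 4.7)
The strategy is a standard transfer argument reducing the statement to Theorem~\ref{MAIN}.

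Starting from the short exact sequence $1\to\PMod_g^n\to\Mod_g^n\to S_n\to 1$, I would apply the Lyndon--Hochschild--Serre spectral sequence with coefficients in $V(\lambda)_n$:
$$E_2^{p,q}=H^p\bigl(S_n;H^q(\PMod_g^n;V(\lambda)_n)\bigr)\Rightarrow H^{p+q}(\Mod_g^n;V(\lambda)_n).$$
Since $|S_n|$ is invertible in $\mathbb{Q}$, the group cohomology of the finite group $S_n$ vanishes in positive degrees with arbitrary $\mathbb{Q}[S_n]$-module coefficients, so the spectral sequence collapses to its $p=0$ column and yields the transfer isomorphism
$$H^i(\Mod_g^n;V(\lambda)_n)\cong H^i(\PMod_g^n;V(\lambda)_n)^{S_n}.$$

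Next, because the $\Mod_g^n$-action on $V(\lambda)_n$ is pulled back along the quotient $\Mod_g^n\twoheadrightarrow S_n$, it restricts to the trivial action on $\PMod_g^n$. By the universal coefficient theorem,
$$H^i(\PMod_g^n;V(\lambda)_n)\cong H^i(\PMod_g^n;\mathbb{Q})\otimes_{\mathbb{Q}} V(\lambda)_n$$
as $S_n$-representations, with diagonal action. Invoking self-duality of $\mathbb{Q}[S_n]$-irreducibles (their characters are real, hence $V(\lambda)_n\cong V(\lambda)_n^{*}$), Schur's lemma identifies
$$\dim\bigl(H^i(\PMod_g^n;\mathbb{Q})\otimes V(\lambda)_n\bigr)^{S_n}=\dim\Hom_{S_n}\bigl(V(\lambda)_n,H^i(\PMod_g^n;\mathbb{Q})\bigr),$$
which is exactly the multiplicity of $V(\lambda)_n$ in $H^i(\PMod_g^n;\mathbb{Q})$. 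By Theorem~\ref{MAIN}, this multiplicity is independent of $n$ in the claimed stable range, so $\dim H^i(\Mod_g^n;V(\lambda)_n)$ is eventually constant and the two sides of the asserted isomorphism are abstractly isomorphic as $\mathbb{Q}$-vector spaces.

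The only point requiring verification is that the $S_n$-action on $H^i(\PMod_g^n;\mathbb{Q})$ used in Theorem~\ref{MAIN} (coming from permutation of the labeled marked points of $\mathcal{M}_g^n$) agrees with the conjugation action of $\Mod_g^n/\PMod_g^n$ arising from the extension. Both are realized by the same self-diffeomorphisms of $\Sigma_g$ permuting $\{p_1,\ldots,p_n\}$, so this identification is immediate and not a genuine obstacle; no new ingredients beyond Theorem~\ref{MAIN} are needed.
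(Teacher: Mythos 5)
Your proposal is correct and follows essentially the same route as the paper: reduce to $S_n$-invariants of $H^i(\PMod_g^n;V(\lambda)_n)$ (the paper cites the transfer map directly, you derive the same isomorphism via collapse of the Lyndon--Hochschild--Serre spectral sequence over $\mathbb{Q}$, which is the same standard fact), then use the universal coefficient theorem and Schur's lemma/self-duality to identify the dimension with the multiplicity of $V(\lambda)_n$ in $H^i(\PMod_g^n;\mathbb{Q})$, which is constant in the stable range by Theorem~\ref{MAIN}.
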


In \cite[Proposition 1.5]{HATCHER_WAHL}, Hatcher-Wahl obtained integral puncture homological stability for for the mapping class group of surfaces with non-empty boundary and established a stable range linear in $i$. Plugging in the trivial representation $V(0)_n$ into Corollary \ref{TWISTSTAB}, we recover rational puncture homological stability for $\Mod_{g}^n$. The stable range that we obtain either depends on the genus $g$ of the surface or is quadratic in $i$ (see Corollary \ref{CLASSTAB}).  Nonetheless, our approach by representation stability is completely different from the classical techniques used in the proofs of homological stability. Furthermore, we believe that our proof gives yet another example of how the notion of representation stability can give meaningful answers where classical stability fails.

\subsection*{Pure mapping class groups for higher dimensional manifolds }
\textbf{Notation: }Let $M$ be a connected, smooth manifold and let $p_1,\ldots, p_n$ be distinct points in the interior of $M$. We define the \textit{mapping class group} to be the group $$\Mod^n(M):=\pi_0\big(\Diff^n (M)\big)$$
where $\Diff^n(M)$ is the subgroup of diffeomorphisms in $\Diff (M \text{ rel } \partial M)$ that leave invariant the set of  points $\{p_1,\ldots,p_n\}$. 
Similarly, we let $\PDiff^n(M)$ be the subgroup of diffeomorphims in $\Diff (M \text{ rel } \partial M)$ that fix the points $p_1,\ldots,p_n$ pointwise and  the \textit{pure mapping class group } is the group $$\PMod^n(M):= \pi_0(\PDiff^n(M)).$$

In Section \ref{Subsec5} we give a proof of representation stability for the sequence  $\{H^i(G^n;\mathbb{Q})\}_{n=1}^{\infty}$ for any group $G$. This is Proposition \ref{PRODUCT} below. We show how to use this result and the ideas developed in this paper to establish the analogue of Theorem \ref{MAIN} and Corollary \ref{TWISTSTAB} for the pure mapping class groups of some connected manifolds of higher dimension.

\begin{theo}\label{MAINDIM>3}
Let $M$ be a smooth connected manifold of dimension $d\geq 3$ such that $\pi_1(M)$ is of type $FP_{\infty}$ (e.g. $M$ compact). Suppose that $\pi_1(M)$ has trivial center or that $\Diff(M)$ is simply connected. If $\Mod(M)$ is a group of type $FP_{\infty}$, then for any $i\geq 0$ the sequence of cohomology groups $\{H^i\big(\PMod^n(M);\mathbb{Q}\big)\}_{n=1}^{\infty}$ is monotone and uniformly representation stable with stable range $n\geq 2i^2+4i$.
\end{theo}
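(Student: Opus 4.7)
The plan is to follow the same template used for the surface case, replacing surface-specific inputs by their higher-dimensional analogues. First I would set up a Birman-type extension for $\PMod^n(M)$ by means of the evaluation fibration $\PDiff^n(M)\to\Diff(M)\to F_n(M)$, where $F_n(M)$ is the ordered configuration space. Its long exact sequence of homotopy groups terminates as
\begin{equation*}
\pi_1(\Diff(M))\xrightarrow{\alpha}\pi_1(F_n(M))\to \PMod^n(M)\to \Mod(M)\to 1.
\end{equation*}
Because $d\geq 3$, the fat diagonal in $M^n$ has codimension $\geq 3$, so $\pi_1(F_n(M))\cong\pi_1(M^n)\cong\pi_1(M)^n$ canonically. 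The image of $\alpha$ is by construction a subgroup of the diagonal copy of $\pi_1(M)$ inside $\pi_1(M)^n$; under either hypothesis of the theorem — $\Diff(M)$ simply connected or $\pi_1(M)$ centerless — this image is trivial. Thus I obtain a short exact sequence of groups
\begin{equation*}
1\to\pi_1(M)^n\to\PMod^n(M)\to\Mod(M)\to 1,
\end{equation*}
on which the permutation action of $S_n$ on $\{p_1,\ldots,p_n\}$ acts trivially on the quotient and by factor permutation on the kernel, compatibly with the forgetful maps $f_n$ for the inclusions $S_n\hookrightarrow S_{n+1}$.

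The second step is to feed this extension into the Lyndon--Hochschild--Serre spectral sequence with rational coefficients,
\begin{equation*}
E_2^{p,q}(n)=H^p\bigl(\Mod(M);\,H^q(\pi_1(M)^n;\mathbb{Q})\bigr)\Longrightarrow H^{p+q}\bigl(\PMod^n(M);\mathbb{Q}\bigr).
\end{equation*}
This is a spectral sequence of $S_n$-representations with $S_n$-equivariant differentials and an $S_n$-stable filtration on the abutment, and the forgetful maps induce a morphism of spectral sequences equivariant for $S_n\hookrightarrow S_{n+1}$. The $FP_\infty$-assumptions on $\pi_1(M)$ and on $\Mod(M)$ guarantee that every $E_2^{p,q}(n)$ is a finite-dimensional $\mathbb{Q}[S_n]$-module.

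The third step applies Proposition \ref{PRODUCT} to $G=\pi_1(M)$ to obtain monotone uniform representation stability for $\{H^q(\pi_1(M)^n;\mathbb{Q})\}_n$, with an explicit stable range quadratic in $q$. Because the $\Mod(M)$-action on the kernel commutes with the $S_n$-action, and because $H^p(\Mod(M);-)$ is an additive exact functor on finite-dimensional $\mathbb{Q}[S_n]$-modules (preserving the decomposition into isotypic components), each row of the $E_2$-page is a monotone uniformly representation stable sequence with the same range. I would then invoke the closure of monotone uniform representation stability under kernels, images, cokernels, and extensions (from Church--Farb) to propagate stability page-by-page through the spectral sequence and onto the abutment. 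Maximizing over $p+q=i$ using Proposition \ref{PRODUCT}'s stable range yields the bound $n\geq 2i^2+4i$.

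The main obstacle is the propagation step: one must verify carefully that the closure properties of monotone uniform representation stability apply at every page of the spectral sequence and are compatible with the natural maps induced by $f_n$, and that the $FP_\infty$-hypotheses suffice to make the $E_2$-page finite-dimensional in every bidegree relevant for a fixed $i$. A secondary technical point is the precise identification of the image of $\pi_1(\Diff(M))\to\pi_1(M)^n$ under the two alternative hypotheses, which rests on the fact that evaluation loops lie in the diagonal copy of $\pi_1(M)$ and commute with everything in $\pi_1(M)^n$.
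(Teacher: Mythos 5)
Your overall skeleton is the paper's: the evaluation fibration $\Diff(M)\to C_n(M)$ with fiber $\PDiff^n(M)$ produces the Birman sequence $1\to\pi_1(C_n(M))\to\PMod^n(M)\to\Mod(M)\to 1$ (your Gottlieb-style remark that the image of $\pi_1(\Diff(M))$ consists of central elements of $\pi_1(C_n(M))\cong\pi_1(M)^n$, hence vanishes when the center is trivial, is equivalent to the injectivity of the boundary map proved in Lemma \ref{EXISTENCEBIR}), and then Proposition \ref{PRODUCT} plus page-by-page propagation through the Hochschild--Serre spectral sequence is exactly the paper's route. The step whose justification does not work as written is the stability of the rows of the $E_2$-page. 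The functor $H^p(\Mod(M);-)$ is not exact, and even granting that the commuting $S_n$- and $\Mod(M)$-actions split the coefficients into isotypic pieces $V_n[\lambda]\cong V(\lambda)_n\otimes W_{n,\lambda}$, the multiplicity of $V(\lambda)_n$ in $H^p\big(\Mod(M);V_n\big)$ is $\dim_{\mathbb{Q}}H^p\big(\Mod(M);W_{n,\lambda}\big)$, which is not determined by $\dim W_{n,\lambda}$ alone: one must show that the multiplicity spaces $W_{n,\lambda}$ stabilize as $\Mod(M)$-modules under the maps induced by $\phi_n$ (this uses injectivity, surjectivity \emph{and} monotonicity of $\{V_n\}$), and one must also establish monotonicity of the rows, which is needed to run the later pages. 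This is precisely the content of Theorem \ref{GSTAB}, whose proof does not argue by exactness but by choosing a free resolution of finite type for $\Mod(M)$ (this is where the $FP_{\infty}$ hypothesis enters, not merely for finite-dimensionality of the $E_2$-terms), so that the cochain groups become $V_n^{\oplus d_p}$ and Propositions \ref{SUB_QUO} and \ref{KER_IM} apply. Replacing your ``additive exact functor'' sentence by an appeal to Theorem \ref{GSTAB} (or by the resolution argument) closes the gap; the rest of your propagation step is then exactly the induction of Lemma \ref{Er}.

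A smaller correction concerns the bookkeeping of the stable range: Proposition \ref{PRODUCT} gives the linear range $n\geq 2q$, not a quadratic one, and Theorem \ref{GSTAB} preserves that range on the $E_2$-page. The quadratic term in $n\geq 2i^2+4i$ is created by the propagation itself: passing from $E_r$ to $E_{r+1}$ degrades the range because the incoming differential originates in row $q+r-1$, and for total degree $i$ one must iterate up to $E_{i+2}=E_{\infty}$; maximizing the resulting range over $q\leq i$ yields the stated bound, as in the paper's Section \ref{Mod_M}.
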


\begin{cor}\label{TWISTSTABDIM>3}
 Let $M$ be as in Theorem \ref{MAINDIM>3}. For any  partition $\lambda$, the sequence of twisted cohomology groups $\{H^i\big(\Mod^n(M);V(\lambda)_n\big)\}_{n=1}^{\infty}$  satisfies classical homological stability: for fixed $i\geq 0$, there is an isomorphism  $$H^i\big(\Mod^n(M);V(\lambda)_n\big)\approx H^i\big(\Mod^{n+1}(M);V(\lambda)_{n+1}\big)\text{ if } n\geq 2i^2+4i.$$
\end{cor}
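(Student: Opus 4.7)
The plan is to mirror the transfer argument used to derive Corollary~\ref{TWISTSTAB} from Theorem~\ref{MAIN}, now feeding in Theorem~\ref{MAINDIM>3} in place of Theorem~\ref{MAIN}. The whole argument is formal once representation stability of the untwisted rational cohomology of $\PMod^n(M)$ is available.

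First I would use the short exact sequence
$$1 \to \PMod^n(M) \to \Mod^n(M) \to S_n \to 1$$
together with the fact that the $\Mod^n(M)$-action on $V(\lambda)_n$ factors through $S_n$, so $\PMod^n(M)$ acts trivially. Because $S_n$ is finite and we work with rational coefficients, the Hochschild--Serre spectral sequence collapses (equivalently, one uses the transfer) to give
$$H^i\bigl(\Mod^n(M); V(\lambda)_n\bigr) \cong H^i\bigl(\PMod^n(M); V(\lambda)_n\bigr)^{S_n},$$
and the trivial action of $\PMod^n(M)$ on $V(\lambda)_n$ yields the $S_n$-equivariant identification
$$H^i\bigl(\PMod^n(M); V(\lambda)_n\bigr) \cong H^i\bigl(\PMod^n(M); \mathbb{Q}\bigr) \otimes V(\lambda)_n.$$

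Next, writing $H^i(\PMod^n(M); \mathbb{Q}) = \bigoplus_\mu c_{\mu,n}\, V(\mu)_n$, Schur's lemma and the self-duality of $S_n$-irreducibles over $\mathbb{Q}$ give $\bigl(V(\mu)_n \otimes V(\lambda)_n\bigr)^{S_n} \cong \mathbb{Q}\cdot \delta_{\lambda,\mu}$, whence $\dim H^i(\Mod^n(M); V(\lambda)_n) = c_{\lambda,n}$. By Theorem~\ref{MAINDIM>3} the sequence $\{H^i(\PMod^n(M); \mathbb{Q})\}$ is uniformly representation stable with stable range $n \geq 2i^2+4i$; by definition this means that $c_{\lambda,n}$ is independent of $n$ once $n \geq 2i^2+4i$. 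Consequently $H^i(\Mod^n(M); V(\lambda)_n)$ and $H^i(\Mod^{n+1}(M); V(\lambda)_{n+1})$ have the same finite $\mathbb{Q}$-dimension in this range, producing the required (abstract) isomorphism.

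I do not foresee a serious obstacle: the proof is entirely formal given Theorem~\ref{MAINDIM>3}. The one point requiring a moment of care is the $S_n$-equivariance of the two identifications above, so that passing to invariants actually computes the multiplicity of $V(\lambda)_n$; this is a standard consequence of the naturality of the transfer and of the universal coefficient theorem, together with the fact that $\PMod^n(M)$ is normal in $\Mod^n(M)$.
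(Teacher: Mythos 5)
Your proposal is correct and follows essentially the same route as the paper: the paper proves this corollary by repeating the transfer argument of Corollary 1.3 (transfer isomorphism onto $S_n$-invariants, the universal coefficient identification $H^i(\PMod^n(M);V(\lambda)_n)^{S_n}\approx\big(H^i(\PMod^n(M);\mathbb{Q})\otimes V(\lambda)_n\big)^{S_n}$, and Schur orthogonality identifying the dimension with the multiplicity of $V(\lambda)_n$), with constancy of that multiplicity for $n\geq 2i^2+4i$ supplied by Theorem 1.4. Your explicit use of the extension $1\to\PMod^n(M)\to\Mod^n(M)\to S_n\to 1$ is just the finite-index/normality input the paper's transfer argument also relies on, so there is no substantive difference.
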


Hatcher-Wahl proved integral puncture homological stability for mapping class group of connected manifolds with boundary of dimension $d\geq 2$ in  \cite[Proposition 1.5]{HATCHER_WAHL}. Our Corollary \ref{TWISTSTABDIM>3}, applied to the trivial representation, gives rational puncture homological stability for $\Mod^n(M)$ for manifolds $M$ that satisfy the hypothesis of Theorem \ref{MAINDIM>3}, even if the manifold has empty boundary.

\subsection*{Classifying spaces for diffeomorphism groups}
Ezra Getzler and Oscar Randal-Williams pointed out to me that the same ideas also give representation stability for the rational cohomology groups of the classifying space $\BPDiff^n(M)$ of the group $\PDiff^n(M)$ defined above.

\begin{theo}\label{MAINBDIFF>3}
Let $M$ be a smooth, compact and connected manifold of dimension $d\geq 3$ such that $\BDiff(M\text{ rel }\partial M)$ has the homotopy type of CW-complex with finitely many cells in each dimension. Then for any $i\geq 0$ the sequence of cohomology groups $\{H^i\big(\BPDiff^n(M);\mathbb{Q}\big)\}_{n=1}^{\infty}$ is monotone and uniformly representation stable with stable range $n\geq 2i^2+4i$.
\end{theo}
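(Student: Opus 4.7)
The plan is to realize $\BPDiff^n(M)$ as the total space of a Borel fibration over $\BDiff(M\text{ rel }\partial M)$ and to transfer representation stability through the associated Serre spectral sequence, in direct analogy with the proof of Theorem \ref{MAINDIM>3} but one level up in the Postnikov tower. Concretely, for a connected manifold of dimension $d\geq 2$ the group $\Diff(M\text{ rel }\partial M)$ acts transitively on the ordered configuration space $F_n(M)$ of $n$ interior points, with stabilizer $\PDiff^n(M)$, so the Borel construction produces a fibration
\begin{equation*}
F_n(M)\longrightarrow \BPDiff^n(M)\longrightarrow \BDiff(M\text{ rel }\partial M).
\end{equation*}
The $S_n$-action on $\BPDiff^n(M)$ that permutes the marked points is trivial on the base and acts by the standard permutation action on the fiber, so the Serre spectral sequence
\begin{equation*}
E_2^{p,q}=H^p\bigl(\BDiff(M\text{ rel }\partial M);\mathcal{H}^q(F_n(M);\mathbb{Q})\bigr)\Longrightarrow H^{p+q}\bigl(\BPDiff^n(M);\mathbb{Q}\bigr)
\end{equation*}
is $S_n$-equivariant, with local system given by the $\Mod(M)$-action on the rational cohomology of $F_n(M)$.

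The next step is to input Church's representation stability theorem for configuration spaces: for a connected manifold of dimension $\geq 2$, the sequence $\{H^q(F_n(M);\mathbb{Q})\}_n$ is monotone and uniformly representation stable with stable range $n\geq 2q$. The refinement I would need beyond bare stability is that this decomposition is compatible with the $\Mod(M)$-action, i.e.\ the multiplicity space $W_{\lambda,n,q}$ of each irreducible $V(\lambda)_n$ in $H^q(F_n(M);\mathbb{Q})$ is eventually independent of $n$ as a finite-dimensional $\Mod(M)$-representation; this follows from naturality under diffeomorphisms of the explicit stable generators constructed in Church's proof. Combined with the hypothesis that $\BDiff(M\text{ rel }\partial M)$ has a CW model with finitely many cells in each dimension (forcing $H^p(\BDiff(M\text{ rel }\partial M);W)$ to be finite-dimensional for every finite-dimensional $\Mod(M)$-module $W$), this yields a decomposition
\begin{equation*}
E_2^{p,q}\;\cong\;\bigoplus_{\lambda} H^p\bigl(\BDiff(M\text{ rel }\partial M);W_{\lambda,n,q}\bigr)\otimes V(\lambda)_n
\end{equation*}
in which the multiplicity of each $V(\lambda)_n$ is finite-dimensional and eventually constant in $n$, so that $\{E_2^{p,q}\}_n$ is itself monotone and uniformly representation stable.

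The differentials are $S_n$-equivariant and each $E_r$-page is finite-dimensional, so the closure properties of monotone and uniform representation stability under $S_n$-equivariant subquotients and extensions, established in \cite{CHURCH_FARB} and used throughout this paper, propagate stability from $E_2$ through each $E_\infty^{p,q}$ and then along the finite filtration of $H^i(\BPDiff^n(M);\mathbb{Q})$. Tracking the stable ranges over the finitely many $(p,q)$ with $p+q=i$, using Church's range $n\geq 2q$ together with the quadratic loss absorbed by repeated subquotients and extensions, delivers the stated bound $n\geq 2i^2+4i$.

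The hard part is the compatibility of Church's stable decomposition with the $\Mod(M)$-action on $H^q(F_n(M);\mathbb{Q})$: one must exhibit multiplicity spaces $W_{\lambda,n,q}$ that stabilize not just as vector spaces but as $\Mod(M)$-representations, so that the local coefficient system $\mathcal{H}^q(F_n(M);\mathbb{Q})$ admits a stable description as $n$ grows. Once this compatibility is in place the spectral sequence argument is essentially formal and mirrors the structure of the proof of Theorem \ref{MAINDIM>3}.
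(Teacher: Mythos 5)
Your setup is the same as the paper's: the fiber bundle $C_n(M)\to \BPDiff^n(M)\to \BDiff(M\text{ rel }\partial M)$ with fiber the ordered configuration space, the $S_n$-equivariant Leray--Serre spectral sequence with local coefficients $H^q(C_n(M);\mathbb{Q})$, Church's Theorem \ref{CHURCH_RS} as input, and an induction up the pages followed by the filtration argument. The gap is exactly at the step you flag as ``the hard part'': you need the multiplicity spaces $W_{\lambda,n,q}=\Hom_{S_n}\big(V(\lambda)_n,H^q(C_n(M);\mathbb{Q})\big)$ to become eventually constant as representations of $G=\pi_1\big(\BDiff(M\text{ rel }\partial M)\big)=\Mod(M)$, and you assert this ``follows from naturality of the explicit stable generators constructed in Church's proof.'' Church's theorem provides no such equivariant refinement: his proof runs through monotonicity and induced representations, not through a stable generating set that one could check is respected by the $\Mod(M)$-action, so this is an unproven lemma rather than a citation. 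Separately, your propagation through the spectral sequence uses that kernels, images and subquotients of uniformly representation stable sequences are again uniformly stable with the same range; that is false for bare uniform stability and requires monotonicity (Propositions \ref{SUB_QUO} and \ref{KER_IM}, from \cite{CHURCH} rather than \cite{CHURCH_FARB}), which you would also need to establish for your decomposed $E_2$-term, not merely for the coefficient sequence.

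The paper's Theorem \ref{GSTAB2} shows that the equivariant refinement you are missing is unnecessary, and this is precisely what the hypothesis that $\BDiff(M\text{ rel }\partial M)$ has a CW model with finitely many cells in each dimension is for. One works at the cochain level: the cellular chain group $C_p(\tilde X)$ of the universal cover of $X=\BDiff(M\text{ rel }\partial M)$ is a free $\mathbb{Z}G$-module of finite rank $d_p$, so $\mathcal{H}om_G\big(C_p(\tilde X),V_n\big)\cong V_n^{\oplus d_p}$ as $S_n$-representations, with the maps induced by $\phi_n$ acting as $\phi_n^{\oplus d_p}$. Monotonicity and uniform stability of $\{V_n\}=\{H^q(C_n(M);\mathbb{Q})\}$ therefore pass to the kernels and images of the consistent, $S_n$-equivariant differentials, hence to $H^p(X;V_n)$ with the same range $n\geq 2q$, and no knowledge of how $\Mod(M)$ interacts with the isotypic decomposition is ever required. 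If you insist on your decomposition route, the $G$-equivariant stabilization of the $W_{\lambda,n,q}$ would itself have to be extracted from monotonicity together with the fact that the $G$- and $S_n$-actions commute (Frobenius reciprocity does give natural $G$-maps $W_{\lambda,n,q}\to W_{\lambda,n+1,q}$), but carrying that out amounts to re-proving Theorem \ref{GSTAB} in a less economical form; as written, your argument has a genuine hole at its central step.
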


\noindent The details are described at the end of the paper in Section \ref{LAST}.

\subsection*{Outline of the proof of Theorem \ref{MAIN}}

 The proof of Theorem \ref{MAIN} is presented in Section \ref{SectionMain} and relies on the existence of the Birman exact sequence which realizes $\pi_1(C_n(\Sigma_{g}))$ as a subgroup of $\PMod_g^n$. Here $C_n(\Sigma_{g,r})$ denotes the configuration space of $n$ distinct ordered points in the interior of $\Sigma_{g,r}$. Then for each $n$ we can consider the associated Hochschild-Serre spectral sequence $E_*(n)$, which allows us to relate  $H^*(\PMod_{g}^n;\mathbb{Q})$ with $H^*\big(\pi_1(C_n(\Sigma_{g}));\mathbb{Q}\big)$. Following ideas of Church in \cite{CHURCH}, we use an inductive argument to show that the terms in each page of the spectral sequence are uniformly representation stable and thus we conclude the result in Theorem \ref{MAIN} from the $E_{\infty}$-page. 

The notion of \textit{monotonicity} for a sequence of $S_n$-representations introduced in \cite{CHURCH} is key in our inductive argument on the pages of the spectral sequence. The base of the induction is monotonicity and representation stability for the terms in the $E_2$-page of the Hochschild-Serre spectral sequence. In order to prove this, we introduce, in Section \ref{Coeff} below, the notion of a consistent sequence of rational $S_n$-representations \textit{compatible with $G$-actions} and prove the following general result which we hope will be useful in future computations.

\begin{theo}[Representation stability with changing coefficients]\label{GSTAB} Let $G$ be a group of type $FP_{\infty}$. Consider a consistent sequence $\{V_n,\phi_n\}_{n=1}^{\infty}$ of finite dimensional rational representations of  $S_n$ compatible with $G$-actions. If the sequence $\{V_n,\phi_n\}_{n=1}^{\infty}$ is monotone and uniformly representation stable with stable range $n\geq N$, then for any integer $p\geq 0$, the sequence $\{H^p(G;V_n), \phi_n^*\}_{n=1}^{\infty}$ is monotone and uniformly representation stable with the same stable range.
\end{theo}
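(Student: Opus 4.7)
The strategy is to pick a finitely generated projective resolution of the trivial module over $\mathbb{Q}G$ (which exists by the $FP_\infty$ hypothesis), apply $\operatorname{Hom}_{\mathbb{Q}G}(-, V_n)$ to obtain a cochain complex computing $H^*(G; V_n)$, and then show that each row of the resulting doubly-indexed family is monotone and uniformly representation stable with stable range $N$; representation stability of the cohomology will then follow from the hereditary properties of monotone uniform representation stability under $S_n$-equivariant maps, as established in \cite{CHURCH} and \cite{CHURCH_FARB}.

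In more detail, fix a projective resolution $P_\bullet \to \mathbb{Q}$ by finitely generated projective $\mathbb{Q}G$-modules and set $C^p_n := \operatorname{Hom}_{\mathbb{Q}G}(P_p, V_n)$, so that $H^p(G; V_n) = H^p(C^\bullet_n)$. Compatibility of the $S_n$- and $G$-actions on $V_n$ endows each $C^p_n$ with an $S_n$-action for which the differentials $d^p_n$ are $S_n$-equivariant, and since $\phi_n$ is $G$-equivariant and equivariant for $S_n \hookrightarrow S_{n+1}$, the induced maps $C^p_n \to C^p_{n+1}$ are chain maps with the same equivariance. By the finite generation of $P_p$ and finite-dimensionality of $V_n$, each $C^p_n$ is a finite-dimensional $\mathbb{Q}$-vector space.

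The key step is to establish that, for each fixed $p$, the consistent sequence $\{C^p_n\}_n$ is monotone and uniformly representation stable with stable range $N$. Choose $Q_p$ with $P_p \oplus Q_p \cong (\mathbb{Q}G)^{r_p}$ for some $r_p$. Applying $\operatorname{Hom}_{\mathbb{Q}G}(-, V_n)$ yields an $S_n$-equivariant splitting
\begin{equation*}
V_n^{r_p} \cong \operatorname{Hom}_{\mathbb{Q}G}(P_p, V_n) \oplus \operatorname{Hom}_{\mathbb{Q}G}(Q_p, V_n),
\end{equation*}
which commutes with the stabilization maps because $\phi_n$ is $G$-equivariant. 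The sequence $\{V_n^{r_p}\}$ is monotone and uniformly representation stable with stable range $N$ since finite direct sums preserve these properties with a common stable range; its summand $\{C^p_n\}$, being the image of an $S_n$-equivariant idempotent commuting with stabilization, inherits monotone uniform representation stability with stable range $N$.

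Finally, since each differential $d^p_n$ is $S_n$-equivariant and commutes with stabilization, the hereditary properties from \cite{CHURCH} and \cite{CHURCH_FARB} applied to the monotone uniformly stable sequences $\{C^p_n\}$ imply that $\{\ker d^p_n\}$, $\{\operatorname{im} d^{p-1}_n\}$, and hence their quotient $\{H^p(G; V_n)\}$ are all monotone and uniformly representation stable with stable range $N$. The main technical obstacle is ensuring that every hereditary step (direct sums, images of equivariant idempotents, kernels, quotients) truly preserves \emph{monotonicity}, and not merely uniform stability, with the same stable range $N$; this is precisely the role monotonicity plays in Church's spectral sequence arguments, and invoking it correctly for the idempotent-summand step is the crux of the proof.
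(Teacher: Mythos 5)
Your proposal is correct and follows essentially the same route as the paper: identify the cochain complex $\mathcal{H}om_G(-,V_n)$ with copies of $V_n$ in each degree, then apply the heredity properties of monotone uniform representation stability (Propositions \ref{SUB_QUO} and \ref{KER_IM}) to kernels, images, and quotients to get stability of $H^p(G;V_n)$ with the same range $N$. The only difference is cosmetic: the paper takes a free resolution of finite type so that $\mathcal{H}om_G(E_p,V_n)\cong V_n^{\oplus d_p}$ on the nose, whereas you use a finitely generated projective resolution and realize $C^p_n$ as the image of an equivariant idempotent on $V_n^{r_p}$, a step that is justified by the same Proposition \ref{KER_IM}.
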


Monotonicity and uniform representation stability for the $E_2$-page follow from Theorem \ref{GSTAB}, as a consequence of the following result by Church \cite[Theorem 1]{CHURCH}.%, in the particular case where $M$ is a closed surface.

\begin{theo}[Church]\label{CHURCH_RS}
For any connected orientable manifold $M$ of finite type and any $q\geq 0$, the cohomology groups $\{H^q(C_n(M);\mathbb{Q})\}$ of the ordered configuration space $C_n(M)$ are monotone and uniformly representation stable, with stable range $n\geq 2q$ if $\dim$  $M\geq 3$ and stable range $n\geq 4q$ if $\dim$  $M=2$.
\end{theo}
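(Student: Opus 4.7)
The plan is to exploit the Leray--Totaro spectral sequence associated to the open inclusion $\iota\colon C_n(M)\hookrightarrow M^n$. For $M$ a connected orientable manifold of finite type and dimension $d$, Totaro's computation identifies the $E_2$ page as
\[
E_2^{p,q}(n)\ \cong\ \bigoplus_{s+t=q}\, H^p\!\bigl(M^n;\mathcal{L}_{s,t}(n)\bigr),
\]
where the local systems $\mathcal{L}_{s,t}(n)$ arise from the cohomology of the fiber $F_n\simeq C_n(\mathbb{R}^d)$ of $\iota$ and are indexed by combinatorial data (graphs/chord diagrams on $n$ vertices) built from the diagonal classes $e_{ij}$ in degree $d-1$. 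The crucial feature is that the $S_n$-action on $C_n(M)$ is compatible with the spectral sequence, and in the Totaro description it permutes indices of the variables and of the diagonals in a manner independent of $n$.

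First I would check representation stability and monotonicity on the $E_2$ page. Each summand $H^p(M^n;\mathcal{L}_{s,t}(n))$ splits as a tensor product of a polynomial-type piece in $H^*(M)^{\otimes n}$ with an $S_n$-equivariant piece coming from the graph combinatorics, and the latter is a sum of representations induced from small subgroups $S_a\times S_{n-a}$ --- exactly the situation handled by Church--Farb's results for induced representations and polynomial $S_n$-actions. Explicit bookkeeping on how many of the $n$ coordinates are ``active'' in degree $q$ shows that each $E_2^{p,q}(n)$ stabilizes once $n$ exceeds roughly $2q$ (or $4q$ when $d=2$, because the diagonal classes then have even degree $d-1=1$ and contribute twice as fast to the total degree). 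At the same time, the natural ``adding a point'' maps give these sequences compatible monotonicity with respect to the standard partial order on representations introduced in \cite{CHURCH}.

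The next step is to propagate stability through the spectral sequence. Here one uses the general principle that in a first-quadrant spectral sequence of $S_n$-representations, if each $E_r^{p,q}(n)$ is monotone and uniformly representation stable with a uniform stable range, then so is $E_{r+1}^{p,q}(n)$: monotonicity ensures that the kernels and cokernels of the $S_n$-equivariant differentials, as well as the extensions assembling the associated graded into $E_\infty$, remain stable. Since only finitely many differentials $d_r$ can affect $E_r^{p,q}$ with $p+q=q$ (and the bidegrees of $d_r$ are bounded by $d$), one obtains stability of the $E_\infty$ page, and hence of $H^q(C_n(M);\mathbb{Q})$, with the same range as on $E_2$.

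The main obstacle is the range, not the qualitative stability: one must verify that the combinatorial $E_2$ terms are stable at $n\geq 2q$ (resp.\ $4q$) and that neither the differentials nor the filtration extensions ever push the range up. This requires the careful use of monotonicity, which is precisely why the refined notion from \cite{CHURCH} is needed here rather than bare representation stability; a direct argument via injectivity and surjectivity of the stabilization maps, without monotonicity, would have to be redone for every page and would not yield a uniform bound.
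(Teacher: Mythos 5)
This theorem is not proved in the paper at all: it is Church's Theorem 1, imported from \cite{CHURCH} as ingredient (iv) of the main argument, so the only meaningful comparison is with Church's published proof. Your sketch does follow that proof's route: the Leray spectral sequence of the open inclusion $\iota\colon C_n(M)\hookrightarrow M^n$ with Totaro's description of the $E_2$ page, the decomposition of each $E_2$ term into representations induced from subgroups $S_a\times S_{n-a}$ with $a$ bounded by the number of ``active'' coordinates, and propagation through the pages using monotonicity in the sense of \cite{CHURCH} (the tools quoted in this paper as Propositions \ref{SUB_QUO} and \ref{KER_IM}).

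Three points in your write-up need repair, one of them substantive. First, $\iota$ is an open inclusion, not a fibration, so there is no ``fiber $F_n\simeq C_n(\mathbb{R}^d)$''; what enters is the higher direct image $R^q\iota_*\mathbb{Q}$, a sum of sheaves supported on diagonals whose stalks are built from cohomology of configuration spaces of small numbers of points in $\mathbb{R}^d$, generated by the degree-$(d-1)$ classes $G_{ab}$ (orientability of $M$ is used exactly here). Second, your explanation of the range $4q$ in dimension $2$ misfires: $d-1=1$ is odd, and the correct count is that degree-one diagonal classes allow up to $q$ of them, hence up to $2q$ active indices plus at most $p$ more from $H^{>0}(M)^{\otimes n}$, in total degree $p+q$, whereas for $d\geq 3$ at most $q/2$ such classes fit; combined with the stable range $n\geq 2(\text{number of active indices})$ for induced representations this gives $2(p+q)$ versus $4(p+q)$ at the level of $E_2$. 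Third, and this is the genuine gap, your claim that stability propagates through the pages ``with the same range'' is not automatic: in general the range degrades page by page, exactly as happens in Lemma \ref{Er} of this paper, where the $E_2$ range depends only on the fiber degree. The reason no degradation occurs in Church's setting is that the $E_2$ ranges just computed are (weakly) increasing in the total degree $p+q$, so the incoming differential into $E_r^{p,q}$ originates at total degree $p+q-1$ and its image is already stable within the range imposed at $(p,q)$; monotonicity then handles kernels, images, and the filtration extensions at fixed total degree. Your sketch gestures at this but does not supply the total-degree bookkeeping, which is precisely the step that turns the qualitative statement into the asserted ranges $2q$ and $4q$.
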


\begin{large}  {\bf Acknowledgments. }\end{large}I want to thank Benson Farb for being a patient guide and for his unconditional support along the way. I am indebted to him and Tom Church for all the useful conversations and for developing the language and the ideas which made possible the realization of this work.  I am grateful to Allen Hatcher and Peter May for their generous answers to my questions. I thank Ezra Getzler and Oscar Randal-Williams for suggesting additional applications, and Ulrike Tillmann for useful comments. I am thankful to the anonymous referee for pointing out a relevant reference that I was missing.
 
\section{Preliminaries}
The precise definition of representation stability and monotonicity are stated below. We also recall some useful facts about group extensions and cohomology of groups.
\subsection{Representation Stability and Monotonicity}\label{Subsec1}
Recall that a {\it rational $S_n$-representation} is a $\mathbb{Q}$-vector space equipped with a linear $S_n$-action. The irreducible representations of $S_n$ are classified by partitions $\lambda=(\lambda_1\geq\cdots\geq\lambda_l)$ of $n$ ( with $\lambda_1+\cdots+\lambda_l=n$). We denote the corresponding irreducible $S_n$-representation by $V_{\lambda}$. Every  $V_{\lambda}$ is defined over $\mathbb{Q}$ and any $S_n$-representation decomposes over $\mathbb{Q}$ into a direct sum of irreducibles (\cite{FULTON_HARRIS} is a standard reference).

If $\lambda$ is any  partition of $k$, then for any $n\geq k+ \lambda_1$ the \textit{padded partition} $\lambda[n]$ of $n$ is given by $\lambda[n]=(n-k,\lambda_1,\cdots,\lambda_l)$. Keeping the notation from \cite{CHURCH_FARB}  we set $V(\lambda)_n=V_{\lambda[n]}$  for any $n\geq k+\lambda_1$. Every irreducible $S_n$-representation is of the form $V(\lambda)_n$ for a unique partition $\lambda$.

The notion of representation stability for different families of groups was first defined in \cite{CHURCH_FARB}. We recall this notion for the case of $S_n$-representations.

\begin{definition}
 A sequence $\{V_n\}_{n=1}^{\infty}$ of finite dimensional rational $S_n$-representations with linear maps $\phi_n\colon V_n\rightarrow V_{n+1}$ is said to be \textit{uniformly representation stable with stable range $n\geq N$} if the following conditions are satisfied for all $n\geq N$:
\begin{itemize}
 \item[0.]{\textbf{Consistent Sequence.} The maps $\phi_n\colon V_n\rightarrow V_{n+1}$  are equivariant with respect to the natural inclusion $S_n\hookrightarrow S_{n+1}$.}
 \item[I.]{\textbf{Injectivity.} The maps $\phi_n\colon V_n\rightarrow V_{n+1}$ are injective.}
 \item[II.]{\textbf{Surjectivity.} The $S_{n+1}$-span of  $\phi_n(V_n)$ equals $V_{n+1}$.} 
 \item[III.]{\textbf{Uniformly  Multiplicity Stable with range $n\geq N$.} For each partition $\lambda$, the multiplicities $c_{\lambda}(V_n)$  of $V(\lambda)_n$ in $V_n$ are constant for all $n\geq N$.} 
\end{itemize}
\end{definition}

The notion of monotonicity introduced in \cite{CHURCH} will be key in our argument.

\begin{definition}
A consistent sequence $\{V_n\}_{n=1}^{\infty}$ of $S_n$-representations with injective maps $\phi_n\colon V_n\hookrightarrow V_{n+1}$ is \textit{monotone} for $n\geq N$ if for each subspace $W<V_n$ isomorphic to $V(\lambda)_n^{\oplus k}$, the $S_{n+1}$-span of $\phi_n(W)$ contains $V(\lambda)_{n+1}^{\oplus k}$ as a subrepresentation for $n\geq N$.
\end{definition}

Now we point out the properties of monotone sequences that are useful for our purpose. These results are proven in \cite[Sections 2.1 and 2.2]{CHURCH}.

\begin{prop}\label{SUB_QUO}
Given $\{W_n\}<\{V_n\}$, if the sequence $\{V_n\}$ is monotone then so is $\{W_n\}$. If $\{V_n\}$ and  $\{W_n\}$ are monotone and uniformly representation stable with stable range $n\geq N$, then $\{V_n/W_n\}$ is monotone and representation stable for $n\geq N$. Conversely, if $\{W_n\}$ and  $\{V_n/ W_n\}$ are monotone and uniformly representation stable with stable range $n\geq N$, then $\{V_n\}$ is monotone and uniformly representation stable for $n\geq N$.
\end{prop}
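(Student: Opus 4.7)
My plan is to verify each of the three claims in turn, using monotonicity in each case to control irreducible multiplicities under subrepresentations and quotients. Throughout, I will freely use semisimplicity of finite-dimensional $\mathbb{Q}[S_n]$-modules, which gives $S_n$-equivariant complements.

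\emph{Part (i), the subrepresentation statement.} If $U < W_n$ is isomorphic to $V(\lambda)_n^{\oplus k}$, I view $U$ as a subrepresentation of $V_n$. Monotonicity of $\{V_n\}$ implies that the $S_{n+1}$-span of $\phi_n(U)$ in $V_{n+1}$ contains $V(\lambda)_{n+1}^{\oplus k}$. Since $\phi_n(U)\subset W_{n+1}$ and $W_{n+1}$ is $S_{n+1}$-stable, this span already lies inside $W_{n+1}$. Thus $\{W_n\}$ inherits monotonicity automatically.

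\emph{Part (ii), the quotient statement.} For $\{V_n/W_n\}$ with induced maps $\bar\phi_n$, multiplicity stability is additive, $c_\lambda(V_n/W_n)=c_\lambda(V_n)-c_\lambda(W_n)$, and the $S_{n+1}$-span surjectivity of $\bar\phi_n(V_n/W_n)$ follows by projecting a generating set of $V_{n+1}$. The crux is injectivity of $\bar\phi_n$. I argue by contradiction: if $\phi_n(v)\in W_{n+1}$ for some $v\notin W_n$, split $V_n=W_n\oplus Q_n$ as $S_n$-modules and let $q\in Q_n$ be the nonzero component of $v$; then $\phi_n(q)\in W_{n+1}$ as well. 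Apply monotonicity of $\{V_n\}$ to $\tilde U := W_n\oplus\mathbb{Q}[S_n]\cdot q$: the $S_{n+1}$-span of $\phi_n(\tilde U)$ remains inside $W_{n+1}$ (because both summands map into $W_{n+1}$) and must contain $V(\lambda)_{n+1}^{\oplus(c_\lambda(W_n)+k_\lambda)}$ for every $\lambda$, where $k_\lambda$ is the multiplicity of $V(\lambda)_n$ in $\mathbb{Q}[S_n]\cdot q$. Comparing with multiplicity stability $c_\lambda(W_{n+1})=c_\lambda(W_n)$ forces $k_\lambda=0$ for every $\lambda$, contradicting $q\neq 0$. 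Monotonicity of the quotient is then a parallel calculation: given $U<V_n/W_n$ isomorphic to $V(\lambda)_n^{\oplus k}$, lift to $\tilde U<V_n$ with $\tilde U\cap W_n=0$, apply monotonicity of $\{V_n\}$ to $W_n\oplus\tilde U$ to put $V(\lambda)_{n+1}^{\oplus(c_\lambda(W_n)+k)}$ inside $W_{n+1}+R$ for $R$ the $S_{n+1}$-span of $\phi_n(\tilde U)$, and subtract off the $c_\lambda(W_{n+1})=c_\lambda(W_n)$ copies coming from $W_{n+1}$ to see $V(\lambda)_{n+1}^{\oplus k}$ in the image $R/(R\cap W_{n+1})$, which is exactly the $S_{n+1}$-span of $\bar\phi_n(U)$.

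\emph{Part (iii), the extension statement.} For the middle $\{V_n\}$: injectivity of $\phi_n$ is a two-step diagram chase using injectivity on $W_n$ and on the quotient; $S_{n+1}$-span surjectivity follows since the $S_{n+1}$-span of $\phi_n(V_n)$ already contains $W_{n+1}$ (from $W$-surjectivity) and maps onto all of $V_{n+1}/W_{n+1}$ (from quotient surjectivity). Multiplicity stability is again additive. For monotonicity, given $U<V_n$ isomorphic to $V(\lambda)_n^{\oplus k}$, write $U=U'\oplus U''$ with $U'=U\cap W_n\cong V(\lambda)_n^{\oplus j}$ and $U''\cap W_n=0$; apply monotonicity of $\{W_n\}$ to $U'$ to produce $A\cong V(\lambda)_{n+1}^{\oplus j}$ inside the $S_{n+1}$-span of $\phi_n(U')$ (hence inside $W_{n+1}$), and monotonicity of $\{V_n/W_n\}$ to the image of $U''$ in $V_n/W_n$ to produce $V(\lambda)_{n+1}^{\oplus(k-j)}$ inside the image of $R''$ in $V_{n+1}/W_{n+1}$, where $R''$ is the $S_{n+1}$-span of $\phi_n(U'')$. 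Using semisimplicity to split $R''=(R''\cap W_{n+1})\oplus C$, I can extract a subrep $B\subset C\subset R''$ with $B\cong V(\lambda)_{n+1}^{\oplus(k-j)}$ and $B\cap W_{n+1}=0$. Then $A\cap B=0$, so $A+B\cong V(\lambda)_{n+1}^{\oplus k}$ sits inside the $S_{n+1}$-span of $\phi_n(U)$.

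\emph{Main obstacle.} The technical heart is the injectivity assertion in Part (ii), and this is precisely where monotonicity (rather than bare representation stability) is genuinely needed: monotonicity promotes a putative nonzero kernel element $q$ to force extra irreducible summands in $W_{n+1}$, which multiplicity stability of $\{W_n\}$ then forbids. The monotonicity step in Part (iii) is also somewhat delicate because one must lift the quotient's $V(\lambda)_{n+1}^{\oplus(k-j)}$ away from $W_{n+1}$ before adding it to $A$, so that the two pieces intersect trivially and their multiplicities add rather than overlap.
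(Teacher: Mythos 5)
Your argument is correct. Note that the paper does not actually prove Proposition \ref{SUB_QUO} but defers to Church \cite[Sections 2.1 and 2.2]{CHURCH}, and your proof --- semisimplicity of rational $S_n$-representations plus counting the copies of $V(\lambda)_{n+1}$ that monotonicity forces into $W_{n+1}$, compared against multiplicity stability of $\{W_n\}$ --- is essentially the argument given there; the only cosmetic fix is that monotonicity is defined only for $\lambda$-isotypic subspaces, so where you ``apply monotonicity of $\{V_n\}$ to $\tilde U = W_n\oplus\mathbb{Q}[S_n]\cdot q$'' (and likewise to $W_n\oplus\tilde U$ in the quotient-monotonicity step) you should say you apply it to each $\lambda$-isotypic component of that subspace, which is exactly what your per-$\lambda$ count with $k_\lambda$ is already doing.
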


\begin{prop}\label {KER_IM}
Let $\{V_n\}$ and $\{W_n\}$ be monotone sequences for $n\geq N$, and assume that $\{V_n\}$  is uniformly representation stable for $n\geq N$. Then for any consistent sequence of maps $f_n\colon V_n\rightarrow W_n$ that makes the following diagram commutative
\begin{equation*}
\xymatrix{
V_n\ar[r]^{f_n}\ar[d]_{\phi_n}& W_n\ar[d]_{\psi_n}\\
V_{n+1}\ar[r]^{f_{n+1}}& W_{n+1},}
\end{equation*}

 the sequences $\{\text {ker}f_n\}$ and $\{\text{im}f_n\}$ are monotone and uniformly representation stable for $n\geq N$.
\end{prop}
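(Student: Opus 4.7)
The plan is to verify the four conditions of uniform representation stability---consistency, injectivity, surjectivity, and multiplicity stability---together with monotonicity, for $\{\ker f_n\}$ and $\{\mathrm{im}\, f_n\}$, all with stable range $n\geq N$. First, the commutativity of the given diagram makes $\{\ker f_n\}$ a consistent subsequence of $\{V_n\}$ and $\{\mathrm{im}\, f_n\}$ a consistent subsequence of $\{W_n\}$: if $f_n(v)=0$ then $f_{n+1}(\phi_n(v))=\psi_n(f_n(v))=0$, while $\psi_n(f_n(v))=f_{n+1}(\phi_n(v))\in\mathrm{im}\, f_{n+1}$; $S_n$-equivariance is inherited. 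Proposition \ref{SUB_QUO} then transfers monotonicity from $\{V_n\}$ and $\{W_n\}$ to the two subsequences for $n\geq N$, and their structure maps are injective because $\phi_n$ and $\psi_n$ are.

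The heart of the argument is uniform multiplicity stability. Working over $\mathbb{Q}$, rational $S_n$-representations are semisimple, so $V_n\cong\ker f_n\oplus\mathrm{im}\, f_n$ as $S_n$-representations, and for every partition $\lambda$,
$$c_\lambda(V_n)=c_\lambda(\ker f_n)+c_\lambda(\mathrm{im}\, f_n).$$
The left-hand side is constant for $n\geq N$ by uniform representation stability of $\{V_n\}$, while monotonicity of $\{\ker f_n\}$ and $\{\mathrm{im}\, f_n\}$ (applied to their full $\lambda$-isotypic components) forces both summands on the right to be non-decreasing in $n$ for $n\geq N$. Two non-decreasing non-negative integer sequences whose sum is constant must each themselves be constant, so $c_\lambda(\ker f_n)$ and $c_\lambda(\mathrm{im}\, f_n)$ are constant for $n\geq N$, establishing condition III with stable range $N$.

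With multiplicity stability in hand, surjectivity is essentially automatic: monotonicity applied to the $\lambda$-isotypic component of $\ker f_n$ produces a copy of $V(\lambda)_{n+1}^{\oplus c_\lambda(\ker f_n)}=V(\lambda)_{n+1}^{\oplus c_\lambda(\ker f_{n+1})}$ inside the $S_{n+1}$-span of $\phi_n(\ker f_n)$, and summing over $\lambda$ shows this span exhausts $\ker f_{n+1}$; the identical argument works for $\{\mathrm{im}\, f_n\}$ (alternatively one can observe directly that $\mathrm{im}\, f_{n+1}=f_{n+1}(V_{n+1})$ coincides with the $S_{n+1}$-span of $\psi_n(\mathrm{im}\, f_n)$, by surjectivity for $\{V_n\}$ and equivariance of $f_{n+1}$). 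The main obstacle is conceptual rather than technical: one must exploit monotonicity on \emph{both} $\{V_n\}$ and $\{W_n\}$ to obtain non-decreasing multiplicities on both $\{\ker f_n\}$ and $\{\mathrm{im}\, f_n\}$ simultaneously, so that the constancy of $c_\lambda(V_n)$ can be used to pin each summand down to being constant with the same range $N$, rather than merely eventually constant at some larger, uncontrolled threshold.
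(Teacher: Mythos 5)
Your proof is correct, and it is essentially the standard argument: the paper itself does not prove Proposition \ref{KER_IM} but cites Church \cite[Sections 2.1--2.2]{CHURCH}, where the result is established by the same mechanism you use --- kernels and images are consistent subsequences with inherited monotonicity, monotonicity applied to full isotypic components makes $c_\lambda(\ker f_n)$ and $c_\lambda(\mathrm{im}\, f_n)$ non-decreasing, and constancy of $c_\lambda(V_n)=c_\lambda(\ker f_n)+c_\lambda(\mathrm{im}\, f_n)$ then pins both down for $n\geq N$, with surjectivity following from the multiplicity count.
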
\medskip
The previous propositions apply also to $V(\lambda)_n$ for a single partition $\lambda$. In particular to the case of the trivial representation $V(0)_n$.
\begin{prop}\label{PARTITION}
For a fixed partition $\lambda$, assuming monotonicity just for $V(\lambda)_n^{\otimes k}$, Propositions \ref{SUB_QUO} and \ref{KER_IM} hold if we replace ``uniform representation stability'' by ``the multiplicity of $V(\lambda)_n$ is stable''.
\end{prop}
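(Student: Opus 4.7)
The plan is to revisit the arguments that establish Propositions \ref{SUB_QUO} and \ref{KER_IM} in \cite[Sections 2.1--2.2]{CHURCH} and observe that both proofs decompose naturally over the isotypic components of $S_n$-representations. Indeed, the multiplicity $c_\lambda(V_n)$ of $V(\lambda)_n$ in $V_n$ and the monotonicity condition for $V(\lambda)_n^{\oplus k}$-subspaces depend only on the $V(\lambda)_n$-isotypic summand of $V_n$. Since over $\mathbb{Q}$ every finite-dimensional $S_n$-representation canonically splits as a direct sum of its isotypic components, no assertion about the $\lambda$-multiplicity in the original proofs requires knowing anything about isotypic subspaces associated to other partitions.

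For the sub-quotient statement, given $W_n<V_n$, one has the elementary additivity $c_\lambda(V_n)=c_\lambda(W_n)+c_\lambda(V_n/W_n)$, which immediately yields both the forward and converse implications at the level of $V(\lambda)$-multiplicities. For monotonicity, the $V(\lambda)$-condition passes to subrepresentations tautologically; for quotients, one lifts a $V(\lambda)_n^{\oplus k}$-subspace $U\subset V_n/W_n$ to a $V(\lambda)_n^{\oplus k}$-subspace $\widetilde U\subset V_n$ using the canonical isotypic splitting, applies monotonicity of $V_n$ for $V(\lambda)$, and pushes the conclusion down. For the converse, given a $V(\lambda)_n^{\oplus k}$-subspace $U\subset V_n$, one writes $k=k'+k''$ with $U\cap W_n\cong V(\lambda)_n^{\oplus k''}$ and image in $V_n/W_n$ isomorphic to $V(\lambda)_n^{\oplus k'}$, and combines the monotonicity of $W_n$ and $V_n/W_n$ for $V(\lambda)$ to produce $V(\lambda)_{n+1}^{\oplus k}$ inside the $S_{n+1}$-span of $\phi_n(U)$.

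For the kernel-image statement, fix $\lambda$ and restrict the equivariant map $f_n:V_n\to W_n$ to $V(\lambda)$-isotypic components to obtain $f_n^{(\lambda)}:V_n^{(\lambda)}\to W_n^{(\lambda)}$, which is compatible with $\phi_n,\psi_n$. Since $\ker f_n$ and $\mathrm{im}\,f_n$ intersect the $V(\lambda)$-isotypic parts in $\ker f_n^{(\lambda)}$ and $\mathrm{im}\,f_n^{(\lambda)}$ respectively, it suffices to verify stability of the multiplicity of $V(\lambda)$ and monotonicity for $V(\lambda)$ on each of these. The argument of \cite[Section 2.2]{CHURCH} does exactly this: it uses monotonicity to pass an isotypic subspace from level $n$ to level $n+1$ and extract its image or preimage, and every such step is a statement about a single isotypic component.

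The main subtlety, which amounts to bookkeeping rather than new content, is checking that each invocation of monotonicity in \cite{CHURCH} can be isolated to the single partition $\lambda$. The hard point is the converse direction of Proposition \ref{SUB_QUO}, where monotonicity of $V_n$ for $V(\lambda)$ must be synthesized from the $V(\lambda)$-monotonicity of both $W_n$ and $V_n/W_n$, as sketched above. Once this is verified, the conclusions stated in Proposition \ref{PARTITION} follow by rerunning the original proofs verbatim, retaining only the $\lambda$-isotypic information throughout.
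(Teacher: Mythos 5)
Your overall strategy coincides with the paper's: the paper offers no independent argument for Proposition \ref{PARTITION}, it simply observes that Church's proofs of Propositions \ref{SUB_QUO} and \ref{KER_IM} can be rerun one isotypic component at a time, which is exactly what you propose. Your kernel/image discussion and your converse sub-quotient argument do go through using only $\lambda$-isotypic data (additivity of $c_\lambda$ over $\mathbb{Q}$, plus the fact that any subspace isomorphic to $V(\lambda)_n^{\oplus k}$ lies in the $\lambda$-isotypic component), and the hypotheses in the single-partition statement are precisely monotonicity for $V(\lambda)$ and stability of $c_\lambda$.

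One step of your sketch, as literally written, does not work: for monotonicity of the quotient you lift $U\subset V_n/W_n$ to $\widetilde U\subset V_n$, apply monotonicity of $\{V_n\}$ to find $V(\lambda)_{n+1}^{\oplus k}$ inside the $S_{n+1}$-span of $\phi_n(\widetilde U)$, and then ``push the conclusion down''. But the quotient map $V_{n+1}\to V_{n+1}/W_{n+1}$ can kill up to $c_\lambda(W_{n+1})$ of those copies, so this only produces $k-c_\lambda(W_{n+1})$ copies in the span of the image, not $k$. This is exactly where the stability hypothesis on $\{W_n\}$ (in the single-partition setting, stability of $c_\lambda(W_n)$) must enter: for instance, enlarge $\widetilde U$ by the full $\lambda$-isotypic component of $W_n$, obtaining a subspace isomorphic to $V(\lambda)_n^{\oplus (k+c)}$ with $c=c_\lambda(W_n)=c_\lambda(W_{n+1})$; monotonicity of $\{V_n\}$ for $V(\lambda)$ then gives $k+c$ copies upstairs, of which at least $k$ survive in $V_{n+1}/W_{n+1}$, and the image of that span is still the $S_{n+1}$-span of the image of $\phi_n(\widetilde U)$ because the extra summand maps into $W_{n+1}$. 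With this repair (which is how the cited argument of Church proceeds), the rest of your isotypic bookkeeping is correct.
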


\subsection{On the cohomology of group extensions} \label{Subsec2}
A \textit{group extension} of a group $Q$ by a group $H$ is a short exact sequence of groups 
\begin{equation}\label{EXT}
 1\rightarrow H\rightarrow G\rightarrow Q\rightarrow 1.
\end{equation}
Given a $G$-module $M$, the conjugation action  $(h,m)\mapsto(ghg^{-1},g\cdot m)$ of $G$ on $(H,M)$ 
induces an action of $G/H\cong Q$ on $H^*(H;M)$ as follows. Let $F\rightarrow\mathbb{Z}$ be a projective resolution of $\mathbb{Z}$ over $\mathbb{Z}G$ and consider the diagonal action of $G$ in the cochain complex $\mathcal{H}om(F,M)$ given by $f\mapsto [x\mapsto g\cdot f(g^{-1}\cdot x)]$, for $f\in\mathcal{H}om(F,M)$ and $g\in G$. This action restricts to the subcomplex $\mathcal{H}om_H(F,M)$ where $H$ acts trivially by definition, hence we get an induced action of $Q\cong G/H$ on $\mathcal{H}om_H(F,M)$. But the cohomology of this complex is $H^*(H;M)$, giving the desired action of $Q$ on $H^*(H;M)$.

The {\it cohomology Hochschild-Serre spectral sequence} for the group extension (\ref{EXT}) is a first quadrant spectral sequence converging to $H^*(G;M)$  whose $E_2$ page is of the form 
$$E_2^{p,q}=H^p(Q;H^q(H;M)).$$
Furthermore, from the construction of the Hochschild-Serre spectral sequence it can be shown that this spectral sequence is natural in the following sense. Assume we have group extensions (I) and (II) and  group homomorphisms $f_H$ and $f_G$ making the following diagram commute
\begin{equation*}
\xymatrix{
1\ar[r] &H_1 \ar[r]\ar[d]_{f_H} &G_1\ar[r]\ar[d]_{f_G} &Q\ar[r]\ar@{=}[d]_{\text{id}} &1&\text{ (I)}\\
1\ar[r] &H_2 \ar[r] &G_2\ar[r] &Q\ar[r] &1&\text{ (II)}}
\end{equation*}

Then the induced map $$f_H^*\colon H^*(H_2;\mathbb{Q})\rightarrow H^*(H_1;\mathbb{Q})$$ is $Q$-equivariant. Moreover, if $'E_*$ and $''E_*$ denote the Hochschild-Serre spectral sequences corresponding to the extensions (I) and (II), we have
\begin{itemize}
 \item [1)] {Induced maps $(f_H)^*_r\colon {''E_r^{p,q}}\rightarrow {'E_r^{p,q}}$ that commute with the differentials.} 
\item [2)] {The map $(f_G)^*\colon H^*(G_2;\mathbb{Q})\rightarrow H^*(G_1;\mathbb{Q})$ preserves the  natural filtrations of $H^*(G_1;\mathbb{Q})$ and $H^*(G_2;\mathbb{Q})$ inducing a map on the succesive quotients of the filtrations which is the map
$$(f_H)^*_{\infty}\colon {''E_{\infty}^{p,q}}\rightarrow {'E_{\infty}^{p,q}}.$$}
\item [3)] {The map $(f_H)^*_2\colon {''E_2^{p,q}}\rightarrow {'E_2^{p,q}}$ is the one induced by the group homomorphisms id$\colon Q\rightarrow Q$ and $f_H\colon H_1\rightarrow H_2$.}
\end{itemize}

For an explicit description of the Hochschild-Serre spectral sequence we refer the reader to \cite{BROWN} and \cite{MACLANE} (where it is called the Lyndon spectral sequence).

\section{The second cohomology $H^2(\mathcal{M}_{g}^n;\mathbb{Q})$}\label{H_2}

In this section we understand the consistent sequence of $S_n$-representations $$\{H^2(\PMod_{g}^n;\mathbb{Q}),\text{ }f_n^2\}$$ to give an explicit discussion of the phenomenon of representation stability proved on Theorem \ref{MAIN}.

The second cohomology group is given by:
\begin{equation}\label{secondcoho}
H^2(\mathcal{M}_{g,n};\mathbb{Q})\approx H^2(\PMod_{g}^n;\mathbb{Q})\approx H^2(\Mod_{g,n};\mathbb{Q})\oplus \mathbb{Q}^{n},\text{ for } g\geq 3.
\end{equation}
We want to compare $H^2(\PMod_g^n;\mathbb{Q})$ through the forgetful maps $$f_n^2\colon H^2(\PMod_{g}^n;\mathbb{Q})\rightarrow H^2(\PMod_{g}^{n+1};\mathbb{Q}).$$ We already know that  $f_n^2$ is never an isomorphism  (failure of homological stability). Instead, we consider $H^2(\PMod_g^n;\mathbb{Q})$ as an $S_n$-representation and we investigate how those representations depend on the parameter $n$. 
When $g\geq 4$,   $H^2(\Mod_{g,n};\mathbb{Q})\approx \mathbb{Q}$ \cite{HARERSecond} and the $S_n$-action on this summand is trivial. On the other hand, the summand $\mathbb{Q}^{n}$ is generated by classes $\tau_i\in H^2(\PMod_g^n;\mathbb{Q})$ ($i=1,\ldots, n$) corresponding to the central extensions $\PMod(X_i)$:
$$1\rightarrow \mathbb{Z}\rightarrow \PMod(X_i)\rightarrow \PMod_g^n\rightarrow 1.$$
The right map above is induced from the inclusion $X_i:=\Sigma_{g}-N_{\epsilon}(p_{i})\hookrightarrow\Sigma_g^n$, where $N_{\epsilon}(p_{i})=\{x\in\Sigma_{g}^n: d(x,p_i)<\epsilon\}$ for a small $\epsilon>0$. Notice that $X_i\simeq \Sigma_{g,1}^{n-1}$. The kernel is generated by a Dehn twist around the boundary component, which is the simple loop $\partial N_{\epsilon}(p_{i})$ around the puncture $p_i$ in $\Sigma_{g}^n$. Observe that a permutation of the punctures induces a corresponding permutation of the surfaces $\{X_1,\ldots,X_n\}$, hence of the classes $\tau_i$ in $H^2(\PMod_g^n;\mathbb{Q})$.

We can also think of $\tau_i$ as the first Chern class of the line bundle $\mathbb{L}_i$ over $\mathcal{M}_{g,n}$ defined as follows: at a point in $\mathcal{M}_{g,n}$, i.e. a Riemann surface $X$ with marked points $p_1,\ldots,p_n$, the fiber of $\mathbb{L}_i$ is the cotangent space to $X$ at $p_i$. In fact, the $\tau$-classes are the image of the $\psi$-classes under the surjective homomorphism $H^2(\overline{\mathcal{M}}_{g,n};\mathbb{Q}) \rightarrow H^2(\mathcal{M}_{g,n};\mathbb{Q})$, where $\overline{\mathcal{M}}_{g,n}$ is the Deligne-Mumford compactification of $\mathcal{M}_{g,n}$  (see \cite{HAINLOO}). A permutation of the marked points induces the same permutation of the classes $\tau_i$ in $H^2(\mathcal{M}_{g,n};\mathbb{Q})$. 
Therefore, $S_n$ acts on the summand $\mathbb{Q}^{n}$ in (\ref{secondcoho}) by permuting the generators.

Thus, for $g\geq 4$ and $n\geq 3$, the decomposition of (\ref{secondcoho}) into irreducibles is given by $$H^2(\PMod_{g}^n;\mathbb{Q})\approx V(0)_n\oplus V(0)_n\oplus V(1)_n,$$
where, following our notation from Section \ref{Subsec1}, $V(0)_n$ is the trivial $S_n$-representation and $V(1)_n$ is the standard $S_n$-representation.
Notice that, even though the dimension of $H^2(\PMod_{g}^n;\mathbb{Q})$ blows up as $n$ increases, the decomposition into irreducibles stabilizes. In terms of definition of representation stability stated in  Section \ref{Subsec1}, we have shown that the sequence of $S_n$-representations $\{H^2(\PMod_g^n;\mathbb{Q})\}$ is {\it uniformly multiplicity stable} with stable range $n\geq 3$.  This indicates to us that representation stability of the cohomology groups of $\PMod_{g}^n$ may be the phenomena to expect.

\section{Representation stability for $H^*(G;V_n)$}\label{Coeff}
We discuss here when representation stability for a sequence $\{V_n\}$ of $G$-modules will imply representation stability for the cohomology of a group $G$  with coefficients $V_n$. This is Theorem \ref{GSTAB} below and it is a key ingredient for the base of the induction in the proof of Theorem \ref{MAIN}.

\begin{definition}
Let $G$ be a group. We will say that a sequence of rational vector spaces $V_n$ with given maps $\phi_n\colon V_n\rightarrow V_{n+1}$ is {\it consistent} and {\it compatible with $G$-actions} if it satisfies the following:
\begin{itemize}
 \item[-]{\textbf{Consistent Sequence.} Each $V_n$ is a rational $S_n$-representation and the map $\phi_n\colon V_n\rightarrow V_{n+1}$ is equivariant with respect to the inclusion $S_n\hookrightarrow S_{n+1}$.} 
\item[-]{\textbf{Compatible with $G$-actions.} Each $V_n$ is a $G$-module and the maps $\phi_n\colon V_n\rightarrow V_{n+1}$ are $G$-maps. The $G$-action commutes with the $S_n$-action.}
\end{itemize}
\end{definition}

Notice that for a sequence as in the previous definition and $p\geq 0$,  we have that $\{H^p(G;V_n);\phi_n^*\}$ is a consistent sequence of rational $S_n$-representations. Here $$\phi_n^*\colon H^p(G;V_n)\rightarrow H^p(G;V_{n+1})$$ denotes the map induced by $\phi_n\colon V_n\rightarrow V_{n+1}$. 

\begin{rtheo2}[Representation stability with changing coefficients]
 Let $G$ be a group of type $FP_{\infty}$. Consider a consistent sequence $\{V_n,\phi_n\}_{n=1}^{\infty}$ of finite dimensional rational representations of $S_n$ compatible with $G$-actions. If the sequence $\{V_n,\phi_n\}_{n=1}^{\infty}$ is monotone and uniformly representation stable with stable range $n\geq N$, then for any non-negative integer $p$, the sequence $\{H^p(G;V_n), \phi_n^*\}_{n=1}^{\infty}$ is monotone and uniformly representation stable with the same stable range.
\end{rtheo2}

\begin{proof}
Take $E\rightarrow \mathbb{Z}$ a free resolution of $\mathbb{Z}$ over $\mathbb{Z}G$  of finite type. This means that each $E_p$ is a free $G$-module of finite rank, say $E_p\approx (\mathbb{Z}G)^{d_p}$ generated by $x_1,\ldots,x_{d_p}$. 

There is an $S_n$-action on the chain complex $\mathcal{H}om(E,V_n)$ given by $\sigma \cdot h\colon x \mapsto \sigma \cdot h(x)$ for any $h\in\mathcal{H}om(E,V_n)$ and $\sigma \in S_n$. Since the $S_n$-action and the $G$-action on $V_n$ commute, this action restricts to a well-defined $S_n$-action on $\mathcal{H}om_G(E,V_n)$ which makes each $\mathcal{H}om_G(E,V_n)^p:=Hom_G(E_p,V_n)$ into a rational $S_n$-representation.

Observe that any $G$-homomorphism $h\colon E_p\rightarrow V_n$ is completely determined by the $d_p$-tuple $(h(x_1),\ldots,h(x_{d_p}))$. Then the assignment $h\mapsto (h(x_1),\ldots,h(x_{d_p}))$ gives us an isomorphism $$\mathcal{H}om_G(E,V_n)^p\approx V_n^{\oplus d_p}$$ not just of rational vector spaces, but of $S_n$-representations. Notice that since $V_n$ is finite dimensional, $\mathcal{H}om_G(E,V_n)^p$ also has finite dimension.
Moreover, under this isomorphism the map $$\phi_n^p:=\mathcal{H}om_G(E,\phi_n)^p\colon \mathcal{H}om_G(E,V_n)^p\rightarrow \mathcal{H}om_G(E,V_{n+1})^p$$ is just $(\phi_n)^{\oplus d_p}\colon V_n^{\oplus d_p}\rightarrow V_{n+1}^{\oplus d_p}$.
From Proposition \ref{SUB_QUO}, it follows that the sequence $\{\mathcal{H}om_G(E,V_n)^p;\phi_n^p\}$ is monotone and uniformly representation stable for $n\geq N$. 

The differentials $\delta_p^n$ of the cochain complex $\mathcal{H}om_G(E,V_n)$ are a consistent sequence of maps, meaning that the following diagram commutes: 
\begin{equation*}
\xymatrix{
\mathcal{H}om_G(E,V_n)^p\ar[d]_{\delta_p^n}\ar[r]^{\phi_n^p}&\mathcal{H}om_G(E,V_{n+1})^p\ar[d]_{\delta_p^{n+1}}\\
\mathcal{H}om_G(E,V_n)^{p+1}\ar[r]^{\phi_n^{p+1}}&\mathcal{H}om_G(E,V_{n+1})^{p+1}}
\end{equation*}
From Proposition \ref{KER_IM} the subsequences $\{\text{ker } \delta_p^n \}$ and $\{\text{im } \delta_p^n \}$ are monotone and uniformly representation stable for $n\geq N$. Finally Proposition \ref{SUB_QUO} gives the desired result for $H^p(G;V_n):= \text{ker } \delta_p^n / \text{im }\delta_{p+1}^n$.
\end{proof}

Since $H^0(G;V_n)$ is equal to the $G$-invariants $V_n^G$, as a particular case of Theorem  \ref{GSTAB}, we get the following.
\begin{cor}
The sequence of $G$-invariants $\{V_n^G,\phi_n\}$ is monotone and uniformly representation stable with the same stable range as $\{V_n,\phi_n\}$.\end{cor}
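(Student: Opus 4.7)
The plan is to obtain this as the special case $p=0$ of Theorem \ref{GSTAB}, which has just been proved. First I would unwind definitions: for any $G$-module $W$ one has the canonical identification $H^0(G;W) \cong W^G$ as vector spaces, and when $W$ carries a commuting $S_n$-action, this is an isomorphism of $S_n$-representations. This is because the $S_n$-action on $H^0(G;W)$ comes by construction from the diagonal $S_n$-action on the cochain complex $\mathcal{H}om_G(E,W)$, which in degree zero restricts to the obvious $S_n$-action on $W^G \subseteq W$ (well-defined precisely because $S_n$ and $G$ commute on $W$).

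Next I would check that the induced map $\phi_n^*\colon H^0(G;V_n) \to H^0(G;V_{n+1})$ agrees, under this identification, with the restriction $\phi_n|_{V_n^G}\colon V_n^G \to V_{n+1}^G$. This restriction is well-defined because $\phi_n$ is a $G$-map, so it sends $G$-invariants to $G$-invariants. The compatibility is immediate from the explicit form of $\phi_n^0 = (\phi_n)^{\oplus d_0}$ appearing in the proof of Theorem \ref{GSTAB}, since $\mathcal{H}om_G(E_0, V_n) \cong V_n^{\oplus d_0}$ and the $0$-cocycles inside this are exactly the $d_0$-tuples of $G$-invariants.

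With these identifications in place, applying Theorem \ref{GSTAB} at $p=0$ yields that $\{V_n^G, \phi_n|_{V_n^G}\}_{n=1}^{\infty}$ is monotone and uniformly representation stable with stable range $n \geq N$. There is no real obstacle here; the corollary is essentially a tautology once one recognizes $H^0(G; -)$ as the $G$-invariants functor and matches up the $S_n$-equivariant structures.
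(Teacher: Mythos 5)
Your proposal is correct and matches the paper's argument: the paper derives this corollary exactly by observing that $H^0(G;V_n)=V_n^G$ and applying Theorem \ref{GSTAB} at $p=0$. Your additional checks that the $S_n$-structures and the maps $\phi_n^*$, $\phi_n|_{V_n^G}$ agree are just the routine details the paper leaves implicit.
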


\section{Representation stability for $H^*(\PMod_g^n;\mathbb{Q})$}\label{SectionMain} In this section we prove our main result Theorem \ref{MAIN} and some consequences of it. We will focus on the sequence of pure mapping class groups $\PMod_g^n$ and its cohomology with rational coefficients. We consider the case $g\geq 2$. 
\subsection {The ingredients for the proof of the main theorem}\label{Ingred} Here we describe three of the four main ingredients needed in our proof of Theorem \ref{MAIN} in Section \ref{proof}. The ingredient (iv) is Theorem \ref{CHURCH_RS} \cite[Theorem 1]{CHURCH}.
 
\begin{large}{\bf (i) The Birman exact sequence}\end{large}.\label{S_nAction}
Our approach relies on the existence of a nice short exact sequence, introduced by Birman in 1969, that relates the pure mapping class group with the pure braid group of the surface: the \textit{Birman exact sequence} ($Bir1_n$). 

Let $C_n(\Sigma_g)$ be the configuration space of $\Sigma_g$ and $\mathfrak{p}=(p_1,\cdots,p_n)\in C_n(\Sigma_g)$ the punctures or marked points in $\Sigma_g^n$. The map in ($Bir1_n$) that realizes $\pi_1(C_n(\Sigma_g),\mathfrak{p})$ as a subgroup of $\PMod_g^n$ is the {\it point-pushing map} $\text{Push}$. For an element $\gamma\in\pi_1(C_n(\Sigma_g), \mathfrak{p})$, consider the isotopy defined by ``pushing'' the $n$-tuple $(p_1,\cdots,p_n)$ along $\gamma$. Then $\Push(\gamma)$ is represented by the  diffeomorphism at the end of the isotopy.
The map $f$ in $(Bir1_n)$ is a {\it forgetful morphism} induced by the inclusion $\Sigma_g^n\hookrightarrow \Sigma_g$. 

Taking the quotient  $(Bir1_n)$ by the $S_n$-action there, we obtain the Birman exact sequence $(Bir2_n)$. The relation between these two sequences is illustrated in the following diagram. 
\begin{equation}\label{D1}
\xymatrix{
&1\ar[d]& 1\ar[d]&&&\\
1\ar[r]& \pi_1(C_n(\Sigma_g))\ar[d]_q\ar[r]^{\text{Push}} &\PMod_g^n \ar[d]_q\ar[r]^f &\Mod_g \ar@{=}[d]_{id}\ar[r] &1&\text{   ($Bir1_n$)}\\
1\ar[r]& \pi_1(B_n(\Sigma_g))\ar[d]\ar[r]^{\text{Push}} &\Mod_g^n \ar[d]\ar[r]^f &\Mod_g \ar[r] &1&\text{   ($Bir2_n$)}\\
&S_n\ar[d]\ar@{=}[r]^{id}& S_n\ar[d]&&\\
&1&1&&}
\end{equation}

The columns in this diagram relate the groups $\pi_1(C_n(\Sigma_g))$ and $\PMod_g^n$ with  $\pi_1(B_n(\Sigma_g))$ and $\Mod_g^n$, respectively, in the same way as the pure braid group $P_n$ is related to the braid group $B_n$ by the short exact sequence $$1\rightarrow P_n\rightarrow B_n\rightarrow S_n\rightarrow 1.$$ Proofs of the exactness of the sequences in diagram (\ref{D1}) can be found in \cite{BIRMAN} and \cite{FARBMARG}. The exactness of $(Bir1_1)$ and $(Bir2_n)$ requires $g\geq 2$.

Observe that from the short exact sequence ($Bir1_n$) we get an action of $\Mod_g$ on $H^*(\pi_1(C_n(\Sigma_g));\mathbb{Q})$. The second column in diagram (\ref{D1}) defines an $S_n$- action on $H^*(\PMod_g^n;\mathbb{Q})$ which restricts to the $S_n$-action on $H^*(\pi_1(C_n(\Sigma_g));\mathbb{Q})$ defined by the short exact sequence in the first column. The induced map $$\text{Push}^*\colon H^*(\PMod_g^n;\mathbb{Q})\rightarrow H^*(\pi_1(C_n(\Sigma_g));\mathbb{Q})$$ is a $S_n$-map between rational $S_n$-representations. Moreover, from the commutativity of diagram (\ref{D1}) we have the following.

\begin{prop}\label{COMMUTE}
The actions of $S_n$ and $\Mod_g$ on  $H^*(\pi_1(C_n(\Sigma_g));\mathbb{Q})$ commute.
\end{prop}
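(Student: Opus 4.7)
The plan is to produce a single ``master'' group in which $\pi_1(C_n(\Sigma_g))$ sits as a normal subgroup, realize both the $S_n$- and $\Mod_g$-actions as restrictions of its conjugation action, and then identify the relevant quotient with a direct product so the two factors commute automatically. The natural candidate for the master group is $\Mod_g^n$, which contains $\PMod_g^n$, $\pi_1(B_n(\Sigma_g))$ and $\pi_1(C_n(\Sigma_g))$ simultaneously via the diagram~(\ref{D1}).

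First I would check that $\pi_1(C_n(\Sigma_g))$ is normal in $\Mod_g^n$. The middle column of (\ref{D1}) shows that $\PMod_g^n$ is normal in $\Mod_g^n$, and $(Bir2_n)$ shows that $\pi_1(B_n(\Sigma_g))$ is normal in $\Mod_g^n$. Chasing (\ref{D1}), the subgroup $\pi_1(C_n(\Sigma_g))$ is exactly the set of elements of $\pi_1(B_n(\Sigma_g))$ that project trivially to $S_n$, so $\pi_1(C_n(\Sigma_g))=\pi_1(B_n(\Sigma_g))\cap \PMod_g^n$, which is normal as an intersection of normal subgroups. Set $Q:=\Mod_g^n/\pi_1(C_n(\Sigma_g))$. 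Since inner automorphisms act trivially on cohomology, the conjugation action of $\Mod_g^n$ on $\pi_1(C_n(\Sigma_g))$ induces a well-defined $Q$-action on $H^*(\pi_1(C_n(\Sigma_g));\mathbb{Q})$. The $\Mod_g$-action from $(Bir1_n)$ is the restriction of this $Q$-action along $\Mod_g\cong \PMod_g^n/\pi_1(C_n(\Sigma_g))\hookrightarrow Q$, and the $S_n$-action from the left column of (\ref{D1}) is the restriction along $S_n\cong \pi_1(B_n(\Sigma_g))/\pi_1(C_n(\Sigma_g))\hookrightarrow Q$.

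The heart of the proof is then showing $Q\cong \Mod_g\times S_n$ as an internal direct product of the two subgroups above. Both are normal in $Q$ because they are images of normal subgroups of $\Mod_g^n$. Their intersection is trivial, since a common element lifts to $\pi_1(B_n(\Sigma_g))\cap\PMod_g^n=\pi_1(C_n(\Sigma_g))$, which is killed in $Q$. They also generate $Q$: given $g\in\Mod_g^n$ with image $\sigma\in S_n$, the exactness of the left column of (\ref{D1}) lets us lift $\sigma$ to some $b\in\pi_1(B_n(\Sigma_g))$; then $b^{-1}g\in\PMod_g^n$, so $g=b(b^{-1}g)$. Two normal subgroups of a group with trivial intersection commute elementwise, hence $Q=\Mod_g\times S_n$, and the induced $\Mod_g$- and $S_n$-actions on $H^*(\pi_1(C_n(\Sigma_g));\mathbb{Q})$ commute. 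The main obstacle is precisely this final diagram chase: verifying the intersection is trivial and producing the factorization $g=b\cdot p$ requires keeping careful track of the different maps landing in $S_n$ in~(\ref{D1}), but once these are established the conclusion is purely formal.
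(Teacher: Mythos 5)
Your argument is correct and is essentially the paper's own (implicit) justification made precise: the paper simply invokes the commutativity of diagram~(\ref{D1}), and later (in the proof of Proposition \ref{S_nSPECTRAL}) realizes the $S_n$-action by conjugating by lifts $\tilde{\sigma}\in\Push(\pi_1(B_n(\Sigma_g)))<\Mod_g^n$, which is exactly the structure you formalize by showing $\Mod_g^n/\pi_1(C_n(\Sigma_g))\cong\Mod_g\times S_n$. Your diagram chase (normality of $\Push(\pi_1(C_n(\Sigma_g)))$, trivial intersection, and the factorization $g=b\,(b^{-1}g)$) is a complete and correct way to fill in the detail the paper leaves to the reader.
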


\begin{large}{\bf (ii) The Hochschild-Serre spectral sequence}\end{large}.
We denote the Hochschild-Serre spectral sequence associated to the short exact sequence ($Bir1_n$) by 
$E_*(n)$, where the $E_2$-page is given by:

\begin{equation*}
E_2^{p,q}(n)=H^p\big(\Mod_g;H^q(\pi_1(C_n(\Sigma_g));\mathbb{Q})\big),
\end{equation*}

and the spectral sequence converges to $H^{p+q}(\PMod_g^n;\mathbb{Q})$. This spectral sequence gives a natural filtration of $H^i(\PMod_g^n;\mathbb{Q})$:
\begin{equation} \label{FILT}
0\leq F^i_i(n)\leq F^i_{i-1}(n)\leq \cdots\leq F^i_1(n)\leq F^i_0(n)=H^i(\PMod_g^n;\mathbb{Q}),
\end{equation}
where the successive quotients are $F^i_p(n)/F^i_{p+1}(n)\cong E_{\infty}^{p,i-p}(n)$.
\bigskip

The following lemma is due to Harer (\cite[Theorem 4.1]{HARERVCD}) and establishes that $\Mod_g$ satisfies the finiteness conditions that our argument requires.

\begin{lemma}\label{HARER}
For $2g+s+r>2$, the mapping class group $\Mod_{g,r}^s$ is a virtual duality group with virtual cohomological dimension $d(g,r,s)$, where $d(g,0,0)=4g-5$, $d(g,r,s)=4g+2r+s-4$, $g>0$ and $r+s>0$, and $d(0,r,s)=2r+s-3$. In particular, $\Mod_{g,r}^s$ is a group of type $FP_{\infty}$, and for any rational $\Mod_{g,r}^s$-module $M$, we have $H^p(\Mod_{g,r}^s;M)=0$ for $p>d(g,r,s)$.
\end{lemma}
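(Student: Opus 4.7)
The plan is to follow Harer's original strategy of exhibiting a cocompact equivariant spine inside Teichmüller space. I would begin by recalling that $\mathcal{T}_{g,r}^s$ is a contractible real manifold on which $\Mod_{g,r}^s$ acts properly discontinuously with finite stabilizers. By Selberg's lemma, fix a torsion-free finite-index subgroup $\Gamma \leq \Mod_{g,r}^s$; it suffices to show $\Gamma$ is a duality group of dimension $d(g,r,s)$, whence $\Mod_{g,r}^s$ will be a virtual duality group of the same dimension and $FP_\infty$ will follow from cocompactness together with the standard transfer argument.

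Next I would construct the equivariant spine of $\mathcal{T}_{g,r}^s$. When $r+s \geq 1$ the natural tool is the \emph{arc complex} $\mathcal{A}(\Sigma_{g,r}^s)$, whose $k$-simplices are isotopy classes of systems of $k+1$ disjoint, pairwise non-isotopic essential arcs with endpoints at the punctures or boundary. The subcomplex $\mathcal{A}_{\mathrm{fill}}$ of filling arc systems admits a $\Mod_{g,r}^s$-equivariant deformation retraction onto a spine of $\mathcal{T}_{g,r}^s$ of dimension exactly $d(g,r,s)$, on which $\Mod_{g,r}^s$ acts cocompactly; the dimension computation comes from counting arcs in a maximal filling system and matches $4g+2r+s-4$ for $g>0$ (resp.\ $2r+s-3$ for $g=0$). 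For the closed case $r=s=0$, the analogous spine has dimension $4g-5$ and is extracted from the systole function or the Harvey curve complex structure at infinity.

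To promote cohomological dimension to duality I would apply the Bieri--Eckmann criterion: a group $\Gamma$ of type $FP$ with $\mathrm{cd}(\Gamma)=n$ is a duality group provided $H^i(\Gamma;\mathbb{Z}\Gamma)=0$ for $i<n$ and $H^n(\Gamma;\mathbb{Z}\Gamma)$ is torsion free. Using the spine as a model for $B\Gamma$, these cohomology groups can be identified with the compactly supported cohomology of the spine, or equivalently with the reduced homology of its boundary at infinity inside a suitable equivariant compactification. Harer's calculation shows that this boundary has the homotopy type of a wedge of $(n-1)$-spheres (the Steinberg-type module for the mapping class group), yielding vanishing in the required range and a free dualizing module in top degree.

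The main obstacle is the geometric core of Harer's argument: first, the careful surgery-theoretic proof that $\mathcal{A}_{\mathrm{fill}}$ is contractible (so that the spine is a legitimate model for $B\Gamma$); second, the precise dimension count $d(g,r,s)$ via maximal arc systems or triangulations, where boundary components and punctures contribute differently; and third, the Cohen--Macaulay-type spherical connectivity of the boundary at infinity needed for the duality conclusion. Once these are in hand, the vanishing $H^p(\Mod_{g,r}^s;M)=0$ for $p>d(g,r,s)$ and any rational module $M$ is immediate from $\mathrm{vcd}$, using a transfer from $\Gamma$ along the finite quotient $\Mod_{g,r}^s/\Gamma$.
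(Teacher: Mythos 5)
The paper offers no proof of this statement at all: it is quoted verbatim as Harer's theorem (\cite[Theorem 4.1]{HARERVCD}) and used as a black box, so any comparison is really between your sketch and Harer's published argument. For surfaces with $r+s>0$ your outline does follow Harer's route (torsion-free finite-index subgroup via Selberg, the arc-complex spine of Teichm\"uller space, cocompactness giving type $FP$, and the Bieri--Eckmann criterion with the ``sphere at infinity''/Steinberg-type module as dualizing module), although as you acknowledge the three hard steps --- contractibility of the filling arc complex, the exact dimension count, and the wedge-of-spheres statement at infinity --- are precisely the content of Harer's paper, so what you have is an outline rather than a proof.

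The genuine gap is in the closed case $r=s=0$, which is the case the paper actually needs for $\Mod_g$. You assert that ``the analogous spine has dimension $4g-5$ and is extracted from the systole function or the Harvey curve complex structure at infinity.'' No such spine is known: the arc-complex construction is unavailable without punctures or boundary, and the existence of a cocompact equivariant spine of dimension $4g-5$ (e.g.\ from the systole function) is a well-known open problem, not a theorem. Harer handles the closed case differently: he deduces it from the once-punctured case via the Birman exact sequence $1\rightarrow \pi_1(\Sigma_g)\rightarrow \Mod_g^1\rightarrow \Mod_g\rightarrow 1$, using that $\pi_1(\Sigma_g)$ is a $2$-dimensional Poincar\'e duality group together with the behavior of (virtual) duality groups in extensions, which gives $\mathrm{vcd}(\Mod_g)=(4g-3)-2=4g-5$ and the virtual duality property. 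So your argument as written does not establish the lemma in the case relevant to Theorem 1.1; either replace the closed-case step by this extension argument or simply cite Harer, as the paper does. The final deduction that $H^p(\Mod_{g,r}^s;M)=0$ for $p>d(g,r,s)$ and rational $M$ via transfer from the torsion-free subgroup is fine.
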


We now see that the terms of the spectral sequence $E_*(n)$ are finite dimensional $S_n$-representations.

\begin{prop}\label{S_nSPECTRAL}
For $2\leq r\leq\infty$, each $E_r^{p,q}(n)$ is a finite dimensional rational $S_n$-representation and the differentials $$d_r^{p,q}(n)\colon E_r^{p,q}(n)\rightarrow E_r^{p+r,q-r+1}(n)$$ are $S_n$-maps.
\end{prop}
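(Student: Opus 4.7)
The plan is to produce an $S_n$-action on the entire spectral sequence $E_*(n)$ by naturality of Hochschild--Serre, from which $S_n$-equivariance of the differentials is immediate and finite-dimensionality reduces to the $E_2$-page.

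First I would construct, for each $\sigma\in S_n$, an automorphism of the extension $(Bir1_n)$ that is the identity on the quotient $\Mod_g$. For this I use the second column of diagram (\ref{D1}): pick a lift $\tilde\sigma\in\Mod_g^n$ of $\sigma$ and let $c_{\tilde\sigma}$ denote conjugation by $\tilde\sigma$. Since $\PMod_g^n$ is normal in $\Mod_g^n$, $c_{\tilde\sigma}$ restricts to an automorphism of $\PMod_g^n$. The forgetful map $f\colon \PMod_g^n\to\Mod_g$ extends to $\Mod_g^n\to\Mod_g$, and $\tilde\sigma$ maps to the identity there, so $c_{\tilde\sigma}$ preserves the kernel $\pi_1(C_n(\Sigma_g))$ of $f$ and induces the identity on $\Mod_g$. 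Different lifts of $\sigma$ differ by an element of $\PMod_g^n$, and conjugation by an element of a group acts trivially on its own cohomology, so the maps induced on cohomology are independent of the lift.

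Next, by the naturality of the Hochschild--Serre spectral sequence recalled in Section \ref{Subsec2}, $c_{\tilde\sigma}$ induces compatible maps $(c_{\tilde\sigma})^*_r\colon E_r^{p,q}(n)\to E_r^{p,q}(n)$ for every $r\in\{2,\dots,\infty\}$ which commute with the differentials and which, in the limit $r=\infty$, are compatible with the filtration (\ref{FILT}) of $H^{p+q}(\PMod_g^n;\mathbb{Q})$. By the previous paragraph they assemble into an $S_n$-action on each page with $S_n$-equivariant differentials. On $E_2^{p,q}(n)=H^p\big(\Mod_g;H^q(\pi_1(C_n(\Sigma_g));\mathbb{Q})\big)$ this $S_n$-action agrees with the one inherited from the coefficient module, because the $S_n$-action on $H^q(\pi_1(C_n(\Sigma_g));\mathbb{Q})$ of Proposition \ref{COMMUTE} is defined by the very same conjugation rule using lifts inside the subgroup $\pi_1(B_n(\Sigma_g))\subset\Mod_g^n$; on $E_\infty$ it is precisely the $S_n$-action on the successive quotients of the filtration (\ref{FILT}).

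Finite-dimensionality then reduces to the $E_2$-page, since subsequent pages are subquotients. For $g\geq 2$ the configuration space $C_n(\Sigma_g)$ is aspherical by Fadell--Neuwirth, so $H^q(\pi_1(C_n(\Sigma_g));\mathbb{Q})\cong H^q(C_n(\Sigma_g);\mathbb{Q})$; iterating the Fadell--Neuwirth fibration, whose fiber is a punctured surface, and using the Serre spectral sequence shows this is finite-dimensional in every degree. By Lemma \ref{HARER}, $\Mod_g$ is of type $FP_\infty$, so $H^p(\Mod_g;V)$ is finite-dimensional for every finite-dimensional rational $\Mod_g$-module $V$, and in particular for $V=H^q(\pi_1(C_n(\Sigma_g));\mathbb{Q})$. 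The main obstacle in this outline is the identification in the second paragraph: checking that on $E_2$ the action obtained by propagating $c_{\tilde\sigma}$ through the spectral sequence coincides with the one obtained by first endowing the coefficient module with its $S_n$-structure and then taking $H^p(\Mod_g;-)$. This comes down to the fact that inner automorphisms act trivially on group cohomology, together with the observation that the $S_n$-actions appearing in the two columns of diagram (\ref{D1}) are induced by one and the same conjugation formula.
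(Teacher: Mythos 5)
Your strategy is the same as the paper's (conjugation by a lift of $\sigma$, naturality of Hochschild--Serre, finite-dimensionality checked on $E_2$), but there is a genuine error in the construction of the $S_n$-action. You take an \emph{arbitrary} lift $\tilde\sigma\in\Mod_g^n$ of $\sigma\in S_n$ and assert that it maps to the identity under the forgetful map $\Mod_g^n\to\Mod_g$. That is false in general: by exactness of $(Bir2_n)$ the kernel of this map is $\Push\big(\pi_1(B_n(\Sigma_g))\big)$, and a random lift of $\sigma$ need not lie in it. For such a lift, conjugation induces conjugation by $f(\tilde\sigma)\neq 1$ on the quotient $\Mod_g$, so you do not obtain a morphism of extensions over the identity of $\Mod_g$, and the naturality statement recalled in Section \ref{Subsec2} (formulated only for morphisms inducing the identity on the quotient) does not apply as you invoke it. The repair is exactly the paper's choice: since $\pi_1(B_n(\Sigma_g))$ surjects onto $S_n$, one can and must choose $\tilde\sigma\in\Push\big(\pi_1(B_n(\Sigma_g))\big)<\Mod_g^n$; these are precisely the lifts that act as the identity on $\Mod_g$. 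Relatedly, your independence-of-lift argument is aimed at the wrong object: ``conjugation acts trivially on its own cohomology'' concerns $H^*(\PMod_g^n;\mathbb{Q})$, whereas what is needed is independence on each page $E_r^{p,q}(n)$. With lifts chosen in $\Push\big(\pi_1(B_n(\Sigma_g))\big)$, two lifts of $\sigma$ differ by an element of the kernel $\Push\big(\pi_1(C_n(\Sigma_g))\big)$ of $(Bir1_n)$, and conjugation by an element of the kernel acts trivially on the coefficient module $H^q\big(\pi_1(C_n(\Sigma_g));\mathbb{Q}\big)$ by the very definition of the quotient action in Section \ref{Subsec2}; hence it acts trivially on $E_2$ and therefore on every subsequent page, which is the correct justification (and also what identifies the $E_2$-action with the one from Proposition \ref{COMMUTE} and the $E_\infty$-action with the one on the filtration quotients).

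The finite-dimensionality part of your argument is correct but takes a different route from the paper: you deduce finiteness of $H^q\big(C_n(\Sigma_g);\mathbb{Q}\big)$ from asphericity and iterated Fadell--Neuwirth fibrations via the Serre spectral sequence, while the paper quotes Totaro's description of $H^*\big(\pi_1(C_n(\Sigma_g));\mathbb{Q}\big)$; both are then combined with Lemma \ref{HARER} ($\Mod_g$ of type $FP_\infty$) in the same way, and passing to subquotients handles the pages with $r>2$.
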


\begin{proof}
Let $\sigma\in S_n$ and take $\tilde{\sigma}\in\text{Push}(\pi_1(B_n(\Sigma_g))<\Mod_g^n$ (see ($Bir2_n$)). Denote by $c(\tilde{\sigma})$ the conjugation by $\tilde{\sigma}$. Diagram (\ref{D1}) then gives:

\begin{equation*}
\xymatrix{
1\ar[r]& \pi_1(C_n(\Sigma_g))\ar[d]_{c(\tilde{\sigma})}\ar[r] &\PMod_g^n \ar[d]_{c(\tilde{\sigma})}\ar[r] &\Mod_g \ar@{=}[d]_{id}\ar[r] &1\\
1\ar[r]& \pi_1(C_n(\Sigma_g))\ar[r] &\PMod_g^n \ar[r] &\Mod_g \ar[r] &1}
\end{equation*}

The induced maps $c(\tilde{\sigma})^*_r\colon E_r^{p,q}(n)\rightarrow E_r^{p,q}(n)$ do not depend on the lift of $\sigma\in S_n$ and, by naturality of the Hochschild-Serre spectral sequence, they commute with the differentials. Hence we get an $S_n$-action on each $E_r^{p,q}(n)$ for $2\leq r\leq\infty$ that commutes with the differentials. Moreover, naturality also implies that the $S_n$-action on $H^*(\PMod_g^n;\mathbb{Q})$ induces the corresponding $S_n$-action on $E_{\infty}^{p,q}(n)$.

By Lemma \ref{HARER}, the group $\Mod_g$ is of type $FP_{\infty}$. Totaro showed in \cite[Theorem 4]{TOTARO} that the cohomology ring  $H^*(\pi_1(C_n(\Sigma_g));\mathbb{Q})$ is generated by cohomology classes from the rings $H^*(\Sigma_g;\mathbb{Q})$ and $H^*(P_n;\mathbb{Q})$. In particular, his result implies that $H^q(\pi_1(C_n(\Sigma_g));\mathbb{Q})$ is a finite dimensional $\mathbb{Q}$-vector space for $q\geq 0$. It follows that $$E_2^{p,q}(n)=H^p\big(\Mod_g;H^q(\pi_1(C_n(\Sigma_g));\mathbb{Q})\big)$$ is a finite dimensional $\mathbb{Q}$-vector space, and likewise for the subquotients $E_r^{p,q}(n)$.
\end{proof}

\begin{large}{\bf (iii) The forgetful map}\end{large}.
For the pure braid group, there is a natural map  $f_n\colon P_{n+1}\rightarrow P_{n}$ given by ``forgetting'' the last strand. Similarly, the inclusion $\Sigma_g^{n+1}\hookrightarrow \Sigma_g^n$ induces a homomorphism $$f_n\colon \PMod_g^{n+1} \rightarrow \PMod_g^{n} $$ that we call the \textit{forgetful map}. We can also think of this map as the one induced by ``forgetting a marked point'' in $\Sigma_g^n$.  When restricted to the subgroup $\text{Push}\big(\pi_1(C_{n+1}(\Sigma_g))\big)$ it corresponds to the homomorphism in fundamental groups induced by the map $C_{n+1}(\Sigma_g)\rightarrow C_{n}(\Sigma_g)$ given by ``forgetting the last coordinate''. This gives rise to the commutative diagram (3) that relates the exact sequences $(Bir1_{n+1})$ and $(Bir1_{n})$.

\begin{equation}\label{D2}
\xymatrix{
&1\ar[d]& 1\ar[d]&&&\\
&\pi_1(\Sigma_g^n)\ar[d]\ar@{=}[r]^{id}&\pi_1(\Sigma_g^n)\ar[d]&&\\
1\ar[r]& \pi_1(C_{n+1}(\Sigma_g))\ar[d]_{f_n}\ar[r] &\PMod_g^{n+1} \ar[d]_{f_n}\ar[r] &\Mod_g \ar@{=}[d]_{id}\ar[r] &1&\text{ ($Bir_{n+1}$)}\\
1\ar[r]& \pi_1(C_n(\Sigma_g))\ar[d]\ar[r] &\PMod_g^n \ar[d]\ar[r] &\Mod_g \ar[r] &1&\text{ ($Bir1_n$)}\\
&1&1&&\\}
%&\text{(C)}&\text{(D)}&&&}
\end{equation}
Diagram (\ref{D2}) and our remarks in Section \ref{Subsec2} imply the following.
\begin{prop}\label{G-map}
 The induced maps $f_n^*\colon H^*(\pi_1(C_n(\Sigma_g));\mathbb{Q})\rightarrow H^*(\pi_1(C_{n+1}(\Sigma_g);\mathbb{Q})$ are $\Mod_g$-maps.
\end{prop}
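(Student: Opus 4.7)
The plan is to derive the claim directly from the naturality of the $\Mod_g$-action on the cohomology of a normal subgroup under a morphism of group extensions, exactly as recorded in Section \ref{Subsec2}.

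First, I would identify the input: diagram (\ref{D2}) displays two group extensions with common quotient $Q=\Mod_g$, namely $(Bir1_{n+1})$ with kernel $\pi_1(C_{n+1}(\Sigma_g))$ and total group $\PMod_g^{n+1}$, and $(Bir1_n)$ with kernel $\pi_1(C_n(\Sigma_g))$ and total group $\PMod_g^n$. The forgetful maps $f_n$ supply homomorphisms between the kernels and between the total groups that cover the identity on $Q$, and the commutativity of the middle rows of (\ref{D2}) is precisely the hypothesis of the naturality package recalled in Section \ref{Subsec2}. Applying that package immediately yields that the induced map $f_n^*\colon H^*(\pi_1(C_n(\Sigma_g));\mathbb{Q})\to H^*(\pi_1(C_{n+1}(\Sigma_g));\mathbb{Q})$ is $\Mod_g$-equivariant.

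If a more hands-on verification is preferred, I would unwind this at the level of the conjugation action, mirroring the proof of Proposition \ref{S_nSPECTRAL}. For $h\in\Mod_g$, choose any lift $\tilde h\in\PMod_g^{n+1}$; then $f_n(\tilde h)\in\PMod_g^n$ is a lift of $h$ in the lower extension. The commutativity of (\ref{D2}) gives the identity $f_n\circ c(\tilde h)=c(f_n(\tilde h))\circ f_n$ of homomorphisms $\pi_1(C_{n+1}(\Sigma_g))\to\pi_1(C_n(\Sigma_g))$. Since the $\Mod_g$-action on the cohomology of each kernel is independent of the chosen lift, passing to cohomology delivers the desired equivariance of $f_n^*$.

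I do not expect a real obstacle here: both the commutativity of (\ref{D2}) and the naturality formalism for Hochschild--Serre have already been set up earlier in the paper, so the proposition reduces to a direct diagram chase.
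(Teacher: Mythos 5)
Your proposal is correct and matches the paper's own argument: the paper deduces Proposition \ref{G-map} exactly by applying the naturality statement of Section \ref{Subsec2} to the commutative diagram (\ref{D2}), which is your first paragraph, and your lift-and-conjugation unwinding is just a correct expansion of that same mechanism. No gaps.
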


Moreover, diagram (\ref{D2}) and naturality of the Hochschild-Serre spectral sequence give us:

\begin{itemize}
 \item [1)] {Induced maps $(f_n)^*_r\colon E_r^{p,q}(n)\rightarrow E_r^{p,q}(n+1)$ that commute with the differentials. This means that the differentials $d_r^{p,q}(n)$ are consistent maps in the sense of Proposition \ref{KER_IM}}. 
\item [2)] {The map $(f_n)^*\colon H^*(\PMod_g^n;\mathbb{Q})\rightarrow H^*(\PMod_g^{n+1};\mathbb{Q})$ preserves the filtrations (\ref{FILT}) inducing a map on the succesive quotients $E_{\infty}^{p,q}(n)$ which is the map
$(f_n)^*_{\infty}\colon E_{\infty}^{p,q}(n)\rightarrow E_{\infty}^{p,q}(n+1)$.}
\item [3)] {The map $(f_n)^*_2\colon E_2^{p,q}(n)\rightarrow E_2^{p,q}(n+1)$ is the one induced by the group homomorphisms id$\colon \Mod_g\rightarrow \Mod_g$ and $f_n\colon \pi_1(C_{n+1}(\Sigma_g))\rightarrow \pi_1(C_n(\Sigma_g))$.}
\end{itemize}

\subsection{The proof of the main theorem (Theorem \ref{MAIN})} \label{proof}
In order to prove Theorem \ref{MAIN} we use an inductive argument on the pages of the spectral sequence described in Section \ref{Ingred} (ii). The following lemma gives us the base of the induction.

\begin{lemma}\label{E2} For each $p\geq 0$ and $q\geq 0$, the consistent sequence of rational $S_n$-representations $$\{ E_2^{p,q}(n)=H^p\big(\Mod_g;H^q(\pi_1(C_n(\Sigma_g));\mathbb{Q})\big)\}$$ is monotone and uniformly representation stable with stable range $n\geq 4q$. 
\end{lemma}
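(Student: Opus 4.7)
The plan is to deduce this lemma by applying Theorem \ref{GSTAB} (representation stability with changing coefficients) with $G = \Mod_g$ and $V_n = H^q(\pi_1(C_n(\Sigma_g)); \mathbb{Q})$, using Church's Theorem \ref{CHURCH_RS} as the input. To do this, three items must be verified: (a) $\{V_n, f_n^*\}$ is a consistent sequence of finite dimensional rational $S_n$-representations compatible with $\Mod_g$-actions; (b) this sequence is monotone and uniformly representation stable with stable range $n\geq 4q$; (c) $\Mod_g$ is of type $FP_{\infty}$.

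For (c), this is immediate from Lemma \ref{HARER}. For (a), since $g\geq 2$ the configuration space $C_n(\Sigma_g)$ is a $K(\pi,1)$, so $V_n = H^q(C_n(\Sigma_g);\mathbb{Q})$, and this is finite dimensional by Totaro's result already invoked in Proposition \ref{S_nSPECTRAL}. The $\Mod_g$-action on $V_n$ coming from the short exact sequence $(Bir1_n)$ and the $S_n$-action commute by Proposition \ref{COMMUTE}. Proposition \ref{G-map} states that the forgetful maps $f_n^*$ are $\Mod_g$-equivariant; they are $S_n$-equivariant because the underlying group homomorphism $f_n \colon \pi_1(C_{n+1}(\Sigma_g)) \to \pi_1(C_n(\Sigma_g))$ is equivariant with respect to $S_n \hookrightarrow S_{n+1}$ (it is induced by forgetting the last coordinate of a configuration, which obviously commutes with permutations of the first $n$ coordinates). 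Hence $\{V_n, f_n^*\}$ is consistent and compatible with $\Mod_g$-actions in the sense of Section \ref{Coeff}.

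For (b), Theorem \ref{CHURCH_RS} (Church) says that the sequence $\{H^q(C_n(\Sigma_g);\mathbb{Q})\}$ is monotone and uniformly representation stable with range $n\geq 4q$, since $\dim \Sigma_g = 2$.

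Once (a)--(c) are in place, Theorem \ref{GSTAB} yields immediately that $\{H^p(\Mod_g; V_n), \phi_n^*\} = \{E_2^{p,q}(n), (f_n)_2^*\}$ is monotone and uniformly representation stable with the same stable range $n\geq 4q$, which is precisely the claim. The only mild subtlety in this argument is confirming that the $\Mod_g$-action on $H^q(\pi_1(C_n(\Sigma_g));\mathbb{Q})$ featured in the $E_2$-term of the Hochschild--Serre spectral sequence of $(Bir1_n)$ coincides with the conjugation action used throughout Section \ref{Ingred}; this is built into the construction of the spectral sequence and is where Proposition \ref{COMMUTE} enters, ensuring the $S_n$-action does not interfere with the coefficient $\Mod_g$-module structure.
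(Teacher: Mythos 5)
Your proposal is correct and follows exactly the paper's own argument: identify $H^q(\pi_1(C_n(\Sigma_g));\mathbb{Q})$ with $H^q(C_n(\Sigma_g);\mathbb{Q})$ via asphericity, invoke Church's Theorem \ref{CHURCH_RS} for monotonicity and uniform stability with range $n\geq 4q$, use Propositions \ref{COMMUTE} and \ref{G-map} for compatibility with the $\Mod_g$-action, Lemma \ref{HARER} for the $FP_\infty$ condition, and conclude with Theorem \ref{GSTAB}. The extra details you supply (finite dimensionality via Totaro, $S_n$-equivariance of the forgetful maps) are consistent with what the paper establishes elsewhere.
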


\begin{proof}
Let $q\geq 0$. Since $C_n(\Sigma_g)$ is aspherical, by Theorem \ref{CHURCH_RS} of Church we have that the consistent sequence of rational $S_n$-representations $\{H^q(\pi_1(C_n(\Sigma_g));\mathbb{Q})\}$ with the forgetful maps $$f_n\colon H^q(\pi_1(C_n(\Sigma_g));\mathbb{Q})\rightarrow  H^q(\pi_1(C_{n+1}(\Sigma_g));\mathbb{Q})$$ is monotone and uniformly representation stable with stable range $n\geq 4q$. Moreover, Propositions \ref{COMMUTE} and \ref{G-map} imply that the sequence is compatible with the $\Mod_g$-action. The group $\Mod_g$ is $FP_{\infty}$ (Lemma \ref{HARER}). Hence we can apply Theorem \ref{GSTAB}.
\end{proof}

From Lemma \ref{E2}, we follow the same type of inductive argument from \cite[Section 3]{CHURCH} that Church uses in order to prove his main result \cite[Theorem 1]{CHURCH}. Here we get monotonicity and uniform representation stability for all the pages of the spectral sequence $E_*(n)$. We include the proofs here for completeness.

\begin{lemma}\label{Er}
The sequence $\{E_{r}^{p,q}(n)\}$ is monotone and uniformly representation stable with stable range $n\geq 4q+2(r-1)(r-2)$.
\end{lemma}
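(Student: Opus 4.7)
The plan is to proceed by induction on $r\geq 2$, with the base case $r=2$ being exactly Lemma \ref{E2} (note that the asserted bound $4q+2(r-1)(r-2)$ specializes to $4q$ when $r=2$). For the inductive step, assume the statement for page $r$ and all $p,q\geq 0$; we must deduce it for page $r+1$.

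The key input is that by definition $E_{r+1}^{p,q}(n)$ is the cohomology at the middle term of the three-term complex
\begin{equation*}
E_r^{p-r,q+r-1}(n)\xrightarrow{\;d_r\;} E_r^{p,q}(n)\xrightarrow{\;d_r\;} E_r^{p+r,q-r+1}(n),
\end{equation*}
and that by item (1) of the naturality discussion following diagram (\ref{D2}), the forgetful maps $(f_n)_r^*$ commute with $d_r$, so the differentials form consistent sequences of $S_n$-equivariant maps across $n$. Thus I can apply Proposition \ref{KER_IM} to the differential emanating from $E_r^{p,q}(n)$ (using that $E_r^{p,q}(n)$ is monotone and uniformly representation stable by induction, and that the target sequence is monotone) to conclude that $\{\ker d_r\subset E_r^{p,q}(n)\}$ is monotone and uniformly representation stable in the appropriate range. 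Then I apply Proposition \ref{KER_IM} again to the differential landing in $E_r^{p,q}(n)$ to see that $\{\operatorname{im} d_r\subset E_r^{p,q}(n)\}$ is monotone and uniformly representation stable, and finally Proposition \ref{SUB_QUO} on the quotient $\ker d_r/\operatorname{im} d_r=E_{r+1}^{p,q}(n)$.

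The only remaining point is to track stable ranges. By the inductive hypothesis, the three terms $E_r^{p-r,q+r-1}(n)$, $E_r^{p,q}(n)$, $E_r^{p+r,q-r+1}(n)$ are monotone and uniformly representation stable for $n$ at least $4(q+r-1)+2(r-1)(r-2)$, $4q+2(r-1)(r-2)$, and $4(q-r+1)+2(r-1)(r-2)$ respectively (with the last bound interpreted as $0$ when $q<r-1$, in which case the term vanishes). The binding constraint is the first, giving the threshold
\begin{equation*}
n\geq 4(q+r-1)+2(r-1)(r-2)=4q+2(r-1)\bigl[2+(r-2)\bigr]=4q+2r(r-1),
\end{equation*}
which equals $4q+2((r+1)-1)((r+1)-2)$, precisely the bound required for page $r+1$. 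This closes the induction.

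The main obstacle, already handled by the paper's setup, is ensuring that the $S_n$-action on the pages of $E_*(n)$ is compatible with both the differentials $d_r$ and the stabilization maps $(f_n)_r^*$; once Proposition \ref{S_nSPECTRAL} and the naturality discussion after diagram (\ref{D2}) are in hand, the argument is a purely formal combination of Propositions \ref{SUB_QUO} and \ref{KER_IM} with the arithmetic of stable ranges above.
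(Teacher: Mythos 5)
Your proposal is correct and is essentially the paper's own argument: induction on $r$ starting from Lemma \ref{E2}, using the consistency of the differentials (naturality after diagram (\ref{D2})) to apply Proposition \ref{KER_IM} to $\ker d_r$ and $\operatorname{im} d_r$ and then Proposition \ref{SUB_QUO} to the quotient, with the stable range driven by the source term $E_r^{p-r,q+r-1}(n)$. Your range bookkeeping $4(q+r-1)+2(r-1)(r-2)=4q+2r(r-1)$ matches the paper's $4(q+\sum_{k=1}^{r-1}k)$ exactly.
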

\begin{proof}
The proof is done by induction on $r$ where the base case $r=2$ is given by Lemma \ref{E2}.
Assume that $\{E_r^{p,q}(n)\}$ is monotone and uniformly representation stable for $n\geq 4(q+\sum_{k=1}^{r-2} k)$.

As noted before, the differentials $$d_r^{p,q}(n)\colon E_r^{p,q}(n)\rightarrow E_{r}^{p+r,q-r+1}(n)$$ are a consistent sequence of maps in the sense of Proposition \ref{KER_IM}. Then $\{\text{ker } d_r^{p,q}(n)\}$ is monotone and uniformly representation stable for $n\geq 4(q+\sum_{k=1}^{r-2} k)$. Moreover $\{\text{im } d_r^{p-r,q+r-1}(n)\}$ is monotone and uniformly representation stable for $n\geq 4(q+(r-1)+\sum_{k=1}^{r-2} k)$. Therefore by Proposition \ref{SUB_QUO} the next page in the spectral sequence
$$E_r^{p,q}(n)\cong \text{ker } d_r^{p,q}(n)/ \text{im } d_r^{p-r,q+r-1}$$
is monotone and uniformly representation stable for $n\geq 4(q+\sum_{k=1}^{r-1} k)$.
\end{proof}

\begin{lemma}\label{Einf}
For every $p,q\geq 0$ and every $n\geq 2$, we have $E_{\infty}^{p,q}(n)=E_{R}^{p,q}(n)$, where $$R=4g-4=\text{vcd}(\Mod_g)+1.$$
\end{lemma}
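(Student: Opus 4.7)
The plan is to use Harer's finiteness result (Lemma \ref{HARER}) to show that the spectral sequence $E_*(n)$ is concentrated in a horizontal strip, and then to observe that once the page index exceeds the width of this strip, all differentials necessarily vanish.

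First, I would recall that by Lemma \ref{HARER} we have $\text{vcd}(\Mod_g) = d(g,0,0) = 4g-5$, so for any rational $\Mod_g$-module $M$ one has $H^p(\Mod_g; M) = 0$ whenever $p > 4g-5$. Applied to $M = H^q(\pi_1(C_n(\Sigma_g)); \mathbb{Q})$, this yields
\begin{equation*}
E_2^{p,q}(n) = H^p\bigl(\Mod_g; H^q(\pi_1(C_n(\Sigma_g));\mathbb{Q})\bigr) = 0 \quad \text{for all } p \geq 4g-4 = R.
\end{equation*}
Since each $E_r^{p,q}(n)$ for $r \geq 2$ is a subquotient of $E_2^{p,q}(n)$, the same vanishing persists on every page: $E_r^{p,q}(n) = 0$ whenever $p \geq R$.

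Next I would use this vanishing to kill every differential on page $R$ and beyond. Fix $p,q \geq 0$ and $r \geq R$. The outgoing differential
\begin{equation*}
d_r^{p,q}(n)\colon E_r^{p,q}(n) \longrightarrow E_r^{p+r,\, q-r+1}(n)
\end{equation*}
has target with first coordinate $p + r \geq r \geq R$, hence the target vanishes. The incoming differential
\begin{equation*}
d_r^{p-r,\,q+r-1}(n)\colon E_r^{p-r,\,q+r-1}(n) \longrightarrow E_r^{p,q}(n)
\end{equation*}
has source with first coordinate $p - r$; if $E_r^{p,q}(n) \neq 0$ then $p \leq R-1$, so $p - r \leq R - 1 - r \leq -1 < 0$ and the source vanishes. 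Hence both the kernel and the image of the differentials into and out of $E_r^{p,q}(n)$ are trivial for all $r \geq R$.

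It follows that the natural surjection $E_r^{p,q}(n) \twoheadrightarrow E_{r+1}^{p,q}(n)$ is in fact an isomorphism for every $r \geq R$, so
\begin{equation*}
E_R^{p,q}(n) = E_{R+1}^{p,q}(n) = E_{R+2}^{p,q}(n) = \cdots = E_\infty^{p,q}(n),
\end{equation*}
which is the desired statement. There is no real obstacle here beyond carefully verifying the two ``strip'' bounds on the $p$-coordinate; the result is a direct consequence of the fact that $R$ exceeds the cohomological dimension of $\Mod_g$.
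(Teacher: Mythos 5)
Your proposal is correct and follows essentially the same route as the paper: Harer's bound $\mathrm{vcd}(\Mod_g)=4g-5$ kills $E_2^{p,q}(n)$ (hence every later page) for $p\geq R$, and then for $r\geq R$ the outgoing differentials land in that vanishing strip while the incoming ones originate at negative first coordinate in a first-quadrant spectral sequence, so all differentials from page $R$ on vanish and $E_R^{p,q}(n)=E_\infty^{p,q}(n)$. Your write-up even makes explicit the iteration over all $r\geq R$, which the paper carries out only at $r=R$ with the general case implicit.
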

\begin{proof}
The Hochschild-Serre spectral sequence $E_*(n)$ is a first-quadrant spectral sequence. Moreover, from Lemma \ref{HARER} it follows that for every $p>4g-5$ $$0=H^p(\Mod_g;H^q(\pi_1(C_n(\Sigma_g))=E_2^{p,q}(n)=E_r^{p,q}(n).$$  Therefore for $R=4g-4$, $q\geq 0$ and $0\leq p\leq 4g-5$, we have that $E_R^{p-R,q+R-1}(n)=0$ since $p-R<0$ and $E_R^{p+R,q-R+1}(n)=0$ since $p+R>4g-5$. Then the differentials $d_R^{p,q}$ and $d_R^{p-R,q+R-1}$ are zero and hence
$$E_{R+1}^{p,q}(n)=\text{ ker } d_R^{p,q} / \text{im } d_R^{p-R,q+R-1}=E_{R}^{p,q}(n).$$
\end{proof}

Having built up, we are now able to prove our main result: uniform representation stability of $\{H^i(\PMod_{g,r}^n;\mathbb{Q})\}_{n=1}^{\infty}$.

\begin{rtheo1} 
 For any $i\geq 0$ and $g\geq 2$ the sequence of cohomology groups $\{H^i(\PMod_g^n;\mathbb{Q})\}_{n=1}^{\infty}$ is monotone and uniformly representation stable with stable range $$n\geq\text{min } \{4i+2(4g-6)(4g-5), 2i^2+6i\}.$$
\end{rtheo1}
\begin{proof}
Each of the successive quotients of the natural filtration (\ref{FILT}) of $H^i(\PMod_g^n;\mathbb{Q})$ give us a sequence $$\{F^i_p(n)/F^i_{p+1}(n)\approx E_{\infty}^{p,i-p}(n)\}$$ which, by Lemmas \ref{Er} and \ref{Einf}, is monotone and uniformly representation stable with stable range $n\geq 4(i-p)+2(4g-6)(4g-5)$. This is the case, in particular, for $F^i_{i-1}(n)/F^i_{i}(n)$ and $F^i_{i}(n)\approx E_{\infty}^{i,0}(n)$. Then by Proposition \ref{SUB_QUO} we have that $F^i_{i-1}(n)$ is monotone and uniformly representation stable. Reverse induction and Proposition \ref{SUB_QUO} imply that the sequences $\{F^i_p(n)\}$ ($0\leq p\leq i$) are monotone and uniformly representation stable with the same stable range. In particular this is true for $F^i_0(n)=H^i(\PMod_g^n;\mathbb{Q})$.

Observe that $$ 4(i-p)+2(4g-6)(4g-5)+4p\geq 4(i-p)+2(4g-6)(4g-5)$$ for all $0\leq p\leq i$, which give us the desired stable range.

Finally, we notice that for a fixed $i\geq 0$, the group $H^i(\PMod_g^n;\mathbb{Q})$ only depends on the terms $E_{\infty}^{p,i-p}(n)=E_{i+2}^{p,i-p}(n)$, $i\geq p\geq 0$. Hence from Lemma \ref{Er} we get a stable range that does not depend on the genus $g$. However, this stable range is quadratic on $i$: the sequence $\{H^i(\PMod_g^n;\mathbb{Q})\}$ is monotone and uniformly representation stable for $n\geq 4i+2(i+1)(i)=(2i)(i+3)$.
\end{proof}

\subsection{Rational homological stability for $\Mod_g^n$}\label{Conseq}
From the short exact sequence in the second column of diagram (1), we have that any rational $S_n$-representation can be regarded as a representation of $\Mod_g^n$ by composing with the projection $\Mod_g^n\rightarrow S_n$. As a consequence of Theorem $\ref{MAIN}$ we get cohomological stability for $\Mod_g^n$ with twisted coefficients. 

\begin{rcor4}
 For any  partition $\lambda$ , the sequence $\{H^i\big(\Mod_g^n;V(\lambda)_n\big)\}_{n=1}^{\infty}$ of twisted cohomology groups satisfies classical cohomological stability: for fixed $i\geq 0$ and $g\geq 2$, there is an isomorphism  $$H^i\big(\Mod_g^n;V(\lambda)_n\big)\approx H^i\big(\Mod_g^{n+1};V(\lambda)_{n+1}\big),$$ if $n\geq \text{min }\{4i+2(4g-6)(4g-5), n\geq 2i^2+6i\}$.
\end{rcor4}
\begin{proof} 
This is just the argument by Church-Farb in \cite[Corollary 4.4]{CHURCH_FARB}. The group $\PMod_g^n$ is a finite index subgroup of $\Mod_g^n$ and the coefficients $V(\lambda)_n$ are rational vector spaces, therefore the transfer map (see \cite{BROWN}) give us an isomorphism   $$H^i\big(\Mod_g^n;V(\lambda)_n\big)\approx H^i\big(\PMod_g^n;V(\lambda)_n\big)^{S_n}.$$
Moreover, $V(\lambda)_n$ is a trivial $\PMod_g^n$-representation, since the action of $\Mod_g^n$ on $V(\lambda)_n$ factors through $S_n$. Hence, from the universal coefficient theorem, we have  
\begin{equation}\label{INV}
H^i\big(\PMod_g^n;V(\lambda)_n)\big)^{S_n}\approx \Big(H^i(\PMod_g^n;\mathbb{Q})\otimes V(\lambda)_n\Big)^{S_n}.
\end{equation}
For two partitions $\lambda$ and $\mu$ of $n$ the representation $V(\lambda)\otimes V(\mu)$ contains the trivial representation if and only if $\lambda=\mu$, in which case it has multiplicity 1 (see \cite{FULTON_HARRIS}) . Therefore the dimension of (\ref{INV}) is the multiplicity of $V(\lambda)_n$ in $H^i(\PMod_g^n;\mathbb{Q})$ which is constant for  $n\geq 4i+2(4g-6)(4g-5)$ by Theorem \ref{MAIN}.
\end{proof}

In particular, the multiplicity of the trivial representation in $H^i(\PMod_g^n;\mathbb{Q})$, which equals $H^i(\Mod_g^n;\mathbb{Q})$, is constant for $n\geq 4i+2(4g-6)(4g-5)$. In fact, the stable range in this case can be slightly improved.

\begin{cor}\label{CLASSTAB}
For any $i\geq 0$ and a fixed $g\geq 2$, the sequence of mapping class groups $\{\Mod_g^n\}_{n=1}^{\infty}$ satisfies rational cohomological stability:  
\begin{equation*}
H^i(\Mod_g^n;\mathbb{Q})\approx H^i(\Mod_g^{n+1};\mathbb{Q}),
\end{equation*}
if $n\geq \text{max } \{i+(2g-3)(4g-5), 2i^2+4i\}$.
\end{cor}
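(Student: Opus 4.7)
The plan is to refine the proof of Corollary \ref{TWISTSTAB} in the case $\lambda=0$, tracking only the multiplicity of the trivial representation $V(0)_n$ at each page of the Hochschild-Serre spectral sequence $E_*(n)$ rather than the full representation-stability data. The transfer argument of Corollary \ref{TWISTSTAB} already identifies
\begin{equation*}
H^i(\Mod_g^n;\mathbb{Q})\ \cong\ H^i(\PMod_g^n;\mathbb{Q})^{S_n},
\end{equation*}
and its dimension equals the multiplicity of $V(0)_n$ in $H^i(\PMod_g^n;\mathbb{Q})$; so it is enough to show that this multiplicity is eventually constant in the claimed range.

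By Proposition \ref{PARTITION}, the entire inductive framework of Section \ref{proof} (Lemmas \ref{E2}, \ref{Er}, \ref{Einf}, and the reverse-induction on the filtration (\ref{FILT})) carries over verbatim with ``uniform representation stability'' replaced by ``stability of the multiplicity of $V(0)_n$'' and monotonicity assumed only for $V(0)_n^{\oplus k}$. The improvement in the stable range therefore has to come from a sharper base case at the $E_2$-page. Here the multiplicity of $V(0)_n$ in $H^q(\pi_1(C_n(\Sigma_g));\mathbb{Q})$ coincides with $\dim H^q(B_n(\Sigma_g);\mathbb{Q})$, since the $S_n$-action on $C_n(\Sigma_g)$ is free with quotient $B_n(\Sigma_g)$. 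Classical rational homological stability for the unordered configuration space $B_n(\Sigma_g)$ of the closed surface $\Sigma_g$ is known to hold with a strictly better stable range than the range $n\geq 4q$ of Theorem \ref{CHURCH_RS}, and this sharper base range is transported to each $E_2^{p,q}(n) = H^p\bigl(\Mod_g;H^q(\pi_1(C_n(\Sigma_g));\mathbb{Q})\bigr)$ via the multiplicity-only form of Theorem \ref{GSTAB}.

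Feeding this sharper base case into the page-by-page propagation of Lemmas \ref{Er} and \ref{Einf} yields the two bounds of the statement, in direct parallel with the two bounds of Theorem \ref{MAIN}: the genus-dependent bound $n \geq i + (2g-3)(4g-5)$ comes from the vanishing $E_\infty = E_{4g-4}$ supplied by Lemma \ref{Einf} (using $\text{vcd}(\Mod_g) = 4g-5$), while the genus-independent bound $n \geq 2i^2 + 4i$ comes from the first-quadrant identification $E_\infty^{p,i-p}(n) = E_{i+2}^{p,i-p}(n)$. The main obstacle is pinning down the precise sharper $E_2$-base case, namely the stable range of rational classical homological stability for $H^q(B_n(\Sigma_g);\mathbb{Q})$ on the \emph{closed} surface $\Sigma_g$, which is genuinely more delicate than the open case; once this input is available the remainder is bookkeeping that exactly mirrors the proof of Theorem \ref{MAIN}.
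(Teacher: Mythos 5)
Your architecture is exactly the paper's: the paper proves this corollary by passing to the $S_n$-invariants of the spectral sequence---which it identifies with the Hochschild--Serre spectral sequence of the extension $1\to\pi_1(B_n(\Sigma_g))\to\Mod_g^n\to\Mod_g\to 1$, converging to $H^{*}(\Mod_g^n;\mathbb{Q})$---and then rerunning the inductive argument of Section \ref{proof} via Proposition \ref{PARTITION}, with an improved base case at the $E_2$-page. Your transfer identification $H^i(\Mod_g^n;\mathbb{Q})\cong H^i(\PMod_g^n;\mathbb{Q})^{S_n}$, the reduction to the multiplicity of $V(0)_n$, and the identification of that multiplicity in $H^q(\pi_1(C_n(\Sigma_g));\mathbb{Q})$ with $\dim H^q(B_n(\Sigma_g);\mathbb{Q})$ are all correct and match the paper.

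The genuine gap is the base case itself, which you explicitly leave open. The corollary asserts explicit stable ranges, and these cannot be extracted from the assertion that rational stability for $B_n(\Sigma_g)$ holds ``with a strictly better range'': you need a concrete linear bound. The paper gets it from \cite[Corollary 3]{CHURCH}---the same reference as Theorem \ref{CHURCH_RS}---which states that $\dim H_q(C_n(\Sigma_g);\mathbb{Q})^{S_n}$ (equivalently, by the transfer argument you invoke, $\dim H_q(B_n(\Sigma_g);\mathbb{Q})$) is constant for $n>q$; this result already covers closed orientable surfaces, so the closedness of $\Sigma_g$ is not the extra difficulty you anticipate. Once the linear range is fed into the multiplicity-only form of Theorem \ref{GSTAB}, the per-page increments in the analogue of Lemma \ref{Er} drop from $4(r-1)$ to $(r-1)$, so stabilizing at $E_{4g-4}$ (Lemma \ref{Einf}) gives $n\geq (i-p)+\tfrac{(4g-5)(4g-6)}{2}\leq i+(2g-3)(4g-5)$, while truncating at $E_{i+2}$ gives a genus-independent bound within the stated $2i^2+4i$. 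With that one citation supplied, the rest of your outline is indeed bookkeeping and reproduces the paper's proof; without it, the stated numerical ranges are not established.
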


\begin{proof}
For any $n$ the $S_n$-invariants of the spectral sequence  $(E_2^{p.q})^{S_n}$ form a spectral sequence that converges to $H^{p+q}(\PMod_g^n;\mathbb{Q})^{S_n}$. In fact, $(E_2^{p.q})^{S_n}$  is just the $(p,q)$-term of the $E_2$-page of the Hochschild-Serre spectral sequence of the group extension $(Bir2_n)$ converging to  $H^{p+q}(\Mod_g^n;\mathbb{Q})$.
In \cite[Corollary 3]{CHURCH} a better stable range than the one in Theorem \ref{CHURCH_RS} is obtained when restricted to the $S_n$-invariants: the dimension of $H_q(C_n(\Sigma_g);\mathbb{Q})^{S_n}$ is constant for $n> q$.  As a consequence the dimension of  $(E_2^{p.q})^{S_n}$ is constant for $n\geq q$. 
Proposition \ref{PARTITION} allows us to repeat the general argument for this spectral sequence of $S_n$-invariants in order to get the desired stable range. 
\end{proof}

\subsection{Non-closed surfaces}

Our main result is also true if we consider a non-closed surface $\Sigma_{g,r}^s$ of genus $g$, with $r$ boundary components and $s$ punctures with $2g+r+s>2$. 

Let $p_1,\ldots, p_n$ be distinct points in the interior of  $\Sigma_{g,r}^s$. We define the {\it mapping class group}  $\Mod^n(\Sigma_{g,r}^s)$ as the group of isotopy classes of orientation-preserving self-diffeomorphisms of $\Sigma_{g,r}^s$ that permute the distinguished points $p_1,\ldots, p_n$ and that restrict to the identity on the boundary components. The {\it pure mapping class group} $\PMod_{g,r}^n$ is defined analogously by asking that the distinguished points $p_1,\ldots, p_n$ remain fixed pointwise.

When $2g+r+s>2$ we have again a Birman exact sequence (see \cite{FARBMARG}):
$$1\rightarrow \pi_1(C_n(\Sigma_g^{r+s}))\rightarrow \PMod^n(\Sigma_{g,r}^s)\rightarrow \Mod_{g,r}^s\rightarrow 1.$$
In particular, this includes the three punctured sphere $\Sigma_0^{3}$ and the punctured torus $\Sigma_1^{1}$.

Using this short exact sequence and Theorem \ref{CHURCH_RS} we can use the previous arguments to get representation stability for the cohomology of $\PMod^n(\Sigma_{g,r}^s)$, when $2g+s+r>2$.
\begin{theo}\label{BOUNDARY}
 For any $i\geq 0$ and  $2g+s+r>2$ the sequence  $\{H^i(\PMod^n(\Sigma_{g,r}^s);\mathbb{Q})\}_{n=1}^{\infty}$ is monotone and uniformly representation stable with stable range $$n\geq\text{min } \{4i+2\big(d(g,r,s)\big)\big(d(g,r,s)-1\big), 2i^2+6i\}.$$

\noindent Furthermore for any  partition $\lambda$ and any fixed $i\geq 0$ and  $2g+s+r>2$, there is an isomorphism  $$H^i\big(\Mod^n(\Sigma_{g,r}^s);V(\lambda)_n\big)\approx H^i\big(\Mod^{n+1}(\Sigma_{g,r}^s);V(\lambda)_{n+1}\big),$$ if $n\geq\text{min } \{4i+2\big(d(g,r,s)\big)\big(d(g,r,s)-1\big), 2i^2+6i\}.$
\end{theo}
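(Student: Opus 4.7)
The plan is to mimic the proof of Theorem \ref{MAIN} essentially verbatim, replacing $\Sigma_g$ by $\Sigma_{g,r}^s$ and $\Mod_g$ by $\Mod_{g,r}^s$ throughout, and then to replay the transfer argument from Corollary \ref{TWISTSTAB} for the second assertion. All the structural ingredients from Section \ref{Ingred} carry over under the hypothesis $2g+r+s>2$: the Birman exact sequence stated just before the theorem realizes $\pi_1(C_n(\Sigma_g^{r+s}))$ as a normal subgroup of $\PMod^n(\Sigma_{g,r}^s)$ with quotient $\Mod_{g,r}^s$, and the analogue of diagram (\ref{D2}) produced by the forgetful morphism $\Sigma_{g,r}^{s,n+1}\to\Sigma_{g,r}^{s,n}$ gives the same naturality properties between the Hochschild--Serre spectral sequences $E_*(n)$ for successive $n$. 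In particular, the maps $f_n^*\colon H^q(\pi_1(C_n(\Sigma_g^{r+s}));\mathbb{Q})\to H^q(\pi_1(C_{n+1}(\Sigma_g^{r+s}));\mathbb{Q})$ are $\Mod_{g,r}^s$-equivariant, and the $S_n$-action on the terms of each page commutes with the differentials.

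First I would establish the base of the induction, the analogue of Lemma \ref{E2}. Since $\Sigma_g^{r+s}$ is a connected orientable manifold of finite type of dimension $2$, Theorem \ref{CHURCH_RS} applies and gives that the consistent sequence $\{H^q(\pi_1(C_n(\Sigma_g^{r+s}));\mathbb{Q})\}$ is monotone and uniformly representation stable for $n\geq 4q$, and by the analogues of Propositions \ref{COMMUTE} and \ref{G-map}, this sequence is compatible with the $\Mod_{g,r}^s$-action. By Lemma \ref{HARER}, $\Mod_{g,r}^s$ is of type $FP_\infty$ with virtual cohomological dimension $d(g,r,s)$, so Theorem \ref{GSTAB} yields that the $E_2$-page $\{E_2^{p,q}(n)=H^p(\Mod_{g,r}^s;H^q(\pi_1(C_n(\Sigma_g^{r+s}));\mathbb{Q}))\}$ is monotone and uniformly representation stable with the same stable range $n\geq 4q$.

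Next I would run the induction over the pages exactly as in Lemma \ref{Er}: consistency of the differentials with respect to the forgetful maps together with Propositions \ref{SUB_QUO} and \ref{KER_IM} propagate monotonicity and uniform representation stability from page $r$ to page $r+1$, losing $4(r-1)$ in the stable range. The key numerical input is that by Lemma \ref{HARER} we have $E_2^{p,q}(n)=0$ for $p>d(g,r,s)$, so the spectral sequence collapses at page $R=d(g,r,s)+1$, giving $E_\infty^{p,q}(n)=E_R^{p,q}(n)$ (this is the analogue of Lemma \ref{Einf}). Reverse induction on the filtration (\ref{FILT}) of $H^i(\PMod^n(\Sigma_{g,r}^s);\mathbb{Q})$ through its successive quotients $E_\infty^{p,i-p}(n)$ then yields the stated stable range $n\geq 4i+2\,d(g,r,s)(d(g,r,s)-1)$; alternatively, observing that only pages up to $r=i+2$ contribute to total degree $i$ gives the genus-free quadratic bound $n\geq 2i^2+6i$, exactly as at the end of the proof of Theorem \ref{MAIN}.

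The second assertion follows by the transfer argument of Corollary \ref{TWISTSTAB}: $\PMod^n(\Sigma_{g,r}^s)$ sits as a finite-index subgroup of $\Mod^n(\Sigma_{g,r}^s)$ with quotient $S_n$, so for rational coefficients $H^i(\Mod^n(\Sigma_{g,r}^s);V(\lambda)_n)\cong H^i(\PMod^n(\Sigma_{g,r}^s);V(\lambda)_n)^{S_n}\cong (H^i(\PMod^n(\Sigma_{g,r}^s);\mathbb{Q})\otimes V(\lambda)_n)^{S_n}$, whose dimension is the multiplicity of $V(\lambda)_n$ in $H^i(\PMod^n(\Sigma_{g,r}^s);\mathbb{Q})$; this is constant in the asserted range by the first part. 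The only real obstacle is verifying that Church's theorem, our Theorem \ref{CHURCH_RS}, genuinely applies to the non-closed surface $\Sigma_g^{r+s}$ (it does, being a connected orientable manifold of finite type), and that the naturality diagram (\ref{D2}) extends to this setting, which is immediate from the construction of the Birman sequence for surfaces with boundary and punctures in \cite{FARBMARG}. Once these are in place, the argument is formally identical to that of Theorem \ref{MAIN} and Corollary \ref{TWISTSTAB}.
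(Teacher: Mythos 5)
Your proposal is correct and follows essentially the same route as the paper, which itself proves Theorem \ref{BOUNDARY} by invoking the Birman exact sequence for $2g+r+s>2$, Church's Theorem \ref{CHURCH_RS} for $C_n(\Sigma_g^{r+s})$, Harer's finiteness result (Lemma \ref{HARER}) with Theorem \ref{GSTAB}, and the page-by-page induction and transfer argument already used for Theorem \ref{MAIN} and Corollary \ref{TWISTSTAB}. Your bookkeeping of the stable range, with collapse at page $R=d(g,r,s)+1$ giving $4i+2\,d(g,r,s)(d(g,r,s)-1)$ and the genus-free bound $2i^2+6i$, matches the paper's computation.
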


Here $d(g,r,s)$ denotes the virtual cohomological dimension of $\Mod_{g,r}^s$ as in Lemma \ref{HARER}.

In the case of trivial coefficients $V(0)_n=\mathbb{Q}$ we recover puncture stability for the rational cohomology groups of $\Mod^n(\Sigma_{g,r}^s)$ for  $2g+s+r>2$. 

\section{Pure mapping class groups of higher dimensional manifolds}\label{Section5}
We now explain how the key ideas from before can be applied to obtain representation stability for the cohomology of pure mapping class groups of higher dimensional manifolds.

\subsection{Representation stability for $H^*(\PMod^n(M);\mathbb{Q})$} \label {Mod_M}
Let $M$ be a connected, smooth manifold and consider  the mapping class group $\Mod^n(M)$ and the pure mapping class group $\PMod^n(M)$ as defined in the introduction. We now show how, in some cases, the previous techniques and Proposition \ref{PRODUCT} from Section \ref{Subsec5} can be used to prove representation stability for $\{H^i(\PMod^n(M);\mathbb{Q}), f_n^i\}$.\bigskip

{\bf Notation: }We denote by $C_n(M)$ (resp. $B_n(M)$) the configuration space of $n$ distinct ordered (resp. unordered) points in the interior of any manifold $M$. We refer to $p_1,\ldots, p_n$ as the ``punctures'' or the ``marked points''. We will usually take the $n$-tuple $\mathfrak{p}=(p_1,\ldots, p_n)\in C_n(M)$  as the base point of $\pi_1(C_n(M))$ (resp. $\pi_1(B_n(M))$). The group $P_n:=\pi_1(C_n(\mathbb{R}^2),\mathfrak{p})\approx\PMod_{0,1}^n$ is the {\it pure braid group} and the {\it braid group} is $B_n:=\pi_1(B_n(\mathbb{R}^2),\mathfrak{p})\approx\Mod_{0,1}^n$.

The inclusion $$\big(M-\{p_1,\ldots, p_n, p_{n+1}\}\big)\hookrightarrow \big( M-\{p_1,\ldots, p_n\}\big)$$ induces the \textit{forgetful homomorphism}  $$f_n\colon \PMod^{n+1}(M)\rightarrow \PMod^{n}(M).$$

Recall that one of the main ingredients needed in our proof of Theorem \ref{MAIN} is the existence of a Birman exact sequence that allows us to relate $\pi_1(C_n(M),\mathfrak{p})$ with $\PMod^n(M)$. First we notice that, when the dimension of $M$ is $d\geq 3$, the group $\pi_1(C_n(M))$ can be completely understood in terms of $\pi_1(M)$.

\begin{lemma}\label{PRODBIR}
Let $M$ be a smooth connected manifold of dimension $d\geq 3$. Then for any $n\geq 1$ the inclusion map $C_n(M)\hookrightarrow M^n$ induces an isomorphism $\pi_1(C_n(M),\mathfrak{p})\approx \pi_1(M^n,\mathfrak{p})\approx \prod_{i=1}^n \pi_1(M,p_i)$.
\end{lemma}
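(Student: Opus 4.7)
The plan is to show that the inclusion $i: C_n(M) \hookrightarrow M^n$ induces an isomorphism $i_*$ on $\pi_1$; the second identification $\pi_1(M^n, \mathfrak{p}) \approx \prod_{i=1}^n \pi_1(M, p_i)$ is the standard product formula. The key geometric input is that the ``fat diagonal'' $M^n - C_n(M) = \bigcup_{i<j} \Delta_{ij}$, where $\Delta_{ij} = \{\mathbf{x} \in M^n : x_i = x_j\}$, is a finite union of codimension-$d$ submanifolds, so for $d \geq 3$ it can be avoided by generic maps of a loop or a disk. (An equivalent route is by induction on $n$ using the Fadell-Neuwirth fibration $M - \{p_1,\ldots,p_{n-1}\} \to C_n(M) \to C_{n-1}(M)$ together with the five lemma, using that removing codimension-$\geq 3$ points leaves $\pi_1$ unchanged, but I will sketch the direct transversality approach since it makes the role of $d \geq 3$ most transparent.)

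For surjectivity of $i_*$, I would take a representative loop $\gamma = (\gamma_1, \ldots, \gamma_n): S^1 \to M^n$ of a class in $\pi_1(M^n, \mathfrak{p})$. For each pair $i<j$, the map $(\gamma_i,\gamma_j): S^1 \to M \times M$ is $1$-dimensional while $\Delta_{ij}$ has codimension $d \geq 3$; by transversality, a small perturbation of $\gamma$ rel basepoint (the basepoint $\mathfrak{p}$ already lies in $C_n(M)$) yields a homotopic loop whose coordinates are pointwise distinct, hence a preimage of $[\gamma]$ in $\pi_1(C_n(M), \mathfrak{p})$.

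For injectivity, suppose $\gamma: S^1 \to C_n(M)$ bounds a null-homotopy $H: D^2 \to M^n$. Each pair projection $(H_i, H_j): D^2 \to M \times M$ is $2$-dimensional, while $\Delta_{ij}$ has codimension $d \geq 3 > 2$, so relative transversality allows us to perturb $H$ rel $\partial D^2$ to miss every $\Delta_{ij}$. Only finitely many pairs appear, so the perturbations can be performed one at a time and assembled into a null-homotopy of $\gamma$ inside $C_n(M)$.

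The main point requiring care is the simultaneous transversality across all $\binom{n}{2}$ diagonals and keeping the basepoint fixed throughout; this is routine once the codimension counts are in place. The hypothesis $d \geq 3$ is essential in the injectivity step, and its failure for $d = 2$ is precisely why surface braid groups $\pi_1(C_n(\Sigma_g))$ are much larger than $\pi_1(\Sigma_g)^n$, motivating the separate treatment of the surface case earlier in the paper.
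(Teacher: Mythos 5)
Your proof is correct, and it is worth pointing out that the paper itself does not prove this lemma at all: it simply cites Birman's 1969 theorem for the closed case and remarks, following Hatcher, that the statement holds in the generality needed. Your general-position argument is precisely the standard proof underlying that citation, and the codimension counts are right: the fat diagonal is a finite union of codimension-$d$ submanifolds $\Delta_{ij}\cong M^{n-1}$, so a based loop ($1$-dimensional source) can be pushed off it whenever $d\geq 2$, giving surjectivity of $i_*$, while a nullhomotopy $D^2\to M^n$ of a loop already in $C_n(M)$ can be pushed off it rel $\partial D^2$ exactly when $d\geq 3$, giving injectivity; this also correctly isolates why $d=2$ fails and surface braid groups are genuinely larger. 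Two routine points to keep in mind if you write this up carefully: you should smooth the continuous loop or nullhomotopy before invoking transversality, and since the paper's convention is that $C_n(M)$ consists of configurations in the \emph{interior} of $M$, you should either work inside the interior (whose inclusion into $M$ is a homotopy equivalence) or keep your perturbations away from $\partial M$; your remark that avoiding the finitely many closed strata is an open condition, so the perturbations can be performed one at a time, is the right way to handle simultaneity. Your parenthetical Fadell--Neuwirth alternative also works, but the five lemma needs a small adjustment: one does not know a priori that $\pi_2(C_{n-1}(M))\to\pi_2(M^{n-1})$ is onto; instead, comparing with the product fibration $M^n\to M^{n-1}$ (whose connecting map to $\pi_1$ of the fiber vanishes) and using that $\pi_1(M-\{p_1,\ldots,p_{n-1}\})\to\pi_1(M)$ is an isomorphism for $d\geq 3$ forces the connecting homomorphism $\pi_2(C_{n-1}(M))\to\pi_1(M-\{p_1,\ldots,p_{n-1}\})$ to be zero, after which the induction goes through.
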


The case for closed manifolds is due to Birman (\cite[Theorem 1]{BIRMAN1969}).  As Allen Hatcher explained to me, there are many manifolds for which there is a Birman exact sequence.

\begin{lemma}[Existence of a Birman Exact Sequence] \label{EXISTENCEBIR}
Let $M$ be a smooth connected manifold of dimension $d\geq 3$. If the fundamental group $\pi_1(M)$ has trivial center or $\Diff (M)$ is simply connected, then there exists a Birman exact sequence 
\begin{equation}\label{BIRDIM>3}
\xymatrix{
1\ar[r]& \pi_1(C_n(M))\ar[r] &\PMod^n (M)\ar[r] &\Mod (M) \ar[r] &1.}
\end{equation}
\end{lemma}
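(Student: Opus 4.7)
The plan is to derive the exact sequence (\ref{BIRDIM>3}) from the long exact sequence of homotopy groups of the evaluation fibration, and then to identify the connecting homomorphism and show it vanishes under either of the two given hypotheses.

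First I would set up the evaluation map $\mathrm{ev}\colon \Diff(M\text{ rel }\partial M)\to C_n(M)$ sending $\phi\mapsto (\phi(p_1),\ldots,\phi(p_n))$, whose fiber over the basepoint $\mathfrak{p}=(p_1,\ldots,p_n)$ is exactly $\PDiff^n(M)$. That $\mathrm{ev}$ is a Serre fibration is the classical consequence of the parametrized isotopy extension theorem (Palais--Cerf). Since $M$ is connected and $d\geq 3$, the target $C_n(M)$ is path-connected, so the tail of the long exact sequence of homotopy groups takes the form
\[
\pi_1\bigl(\Diff(M\text{ rel }\partial M)\bigr)\xrightarrow{\partial}\pi_1(C_n(M),\mathfrak{p})\longrightarrow \PMod^n(M)\longrightarrow \Mod(M)\longrightarrow 1,
\]
where the middle arrow is precisely the point-pushing homomorphism (obtained from isotopy extension applied to paths in $C_n(M)$) and the right arrow is the map induced by the inclusion $\PDiff^n(M)\hookrightarrow \Diff(M\text{ rel }\partial M)$. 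The desired short exact sequence (\ref{BIRDIM>3}) follows as soon as $\partial=0$.

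The main obstacle is to control $\partial$ explicitly. Unwinding the definition, a loop $\{\phi_t\}_{t\in[0,1]}$ at $\phi_0=\phi_1=\mathrm{id}$ is sent by $\partial$ to the tuple of loops $\gamma_i(t):=\phi_t(p_i)$ in $M$, viewed as an element of $\pi_1(C_n(M),\mathfrak{p})$. Here I would invoke Lemma \ref{PRODBIR} to identify $\pi_1(C_n(M),\mathfrak{p})\cong\prod_{i=1}^n\pi_1(M,p_i)$, so that showing $\partial=0$ reduces to showing that each class $[\gamma_i]\in\pi_1(M,p_i)$ is trivial.

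Each of the two hypotheses disposes of this independently. If $\Diff(M)$ is simply connected, the source of $\partial$ is trivial and $\partial=0$ is immediate (after observing that the natural map $\pi_1(\Diff(M\text{ rel }\partial M))\to \pi_1(\Diff(M))$ is compatible with evaluation, so the former's image in $\pi_1(C_n(M))$ factors through a trivial group). If instead $Z(\pi_1(M))=1$, I would apply the standard centrality argument: for any $[\alpha]\in\pi_1(M,p_i)$, the assignment $(s,t)\mapsto \phi_t(\alpha(s))$ defines a based homotopy between $\alpha\cdot\gamma_i$ and $\gamma_i\cdot\alpha$, so $[\gamma_i]$ lies in $Z(\pi_1(M,p_i))$. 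Since this center is trivial, each $\gamma_i$ is null-homotopic, hence $\partial=0$, yielding (\ref{BIRDIM>3}).
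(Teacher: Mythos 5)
Your proof is correct, and it shares the paper's skeleton: the evaluation fibration $\mathrm{ev}\colon\Diff(M\text{ rel }\partial M)\to C_n(M)$ with fiber $\PDiff^n(M)$, its long exact sequence of homotopy groups, and Lemma \ref{PRODBIR}; the simply connected case is handled identically. Where you genuinely diverge is the centerless case. The paper works \emph{downstream} of the connecting map $\delta\colon\pi_1(C_n(M))\to\pi_0(\PDiff^n(M))$: it composes $\delta$ with the action $\psi\colon\pi_0(\PDiff^n(M))\to\Aut[\pi_1(C_n(M))]$, identifies $\psi\circ\delta$ with $\sigma\mapsto c(\sigma)$, and concludes $\delta$ is injective because $\pi_1(C_n(M))\cong\prod_i\pi_1(M,p_i)$ is centerless. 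You instead work \emph{upstream}: you show the evaluation-induced map $\pi_1(\Diff(M\text{ rel }\partial M))\to\pi_1(C_n(M))$ has trivial image, via the Gottlieb-type argument that each coordinate loop $\gamma_i(t)=\phi_t(p_i)$ is central in $\pi_1(M,p_i)$, and then invoke exactness. Both routes use Lemma \ref{PRODBIR} (you for the coordinatewise reduction, the paper to transfer centerlessness to the product), and both deliver injectivity of $\delta$; yours has the small bonus of explaining \emph{why} the image of $\pi_1$ of the diffeomorphism group dies (it lies in the center), while the paper's is slightly shorter since it never unwinds the evaluation map on loops. Your treatment of the boundaryless/rel-boundary distinction in the simply connected case is, if anything, more careful than the paper's. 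One terminological slip worth fixing: the map you label $\partial$ is not the connecting homomorphism of the fibration but the map induced by $\mathrm{ev}$ on $\pi_1$; the actual connecting homomorphism is the point-pushing map $\pi_1(C_n(M))\to\PMod^n(M)$. This does not affect the argument, since the exact sequence you display is precisely the tail of the fibration sequence.
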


\begin{proof}
The evaluation map $$ev\colon \Diff(M)\rightarrow C_n(M),$$ given  by $ f\mapsto (f(p_1),\ldots,f(p_n))$ is a fibration with fiber $\PDiff^n(M)$. Consider the associated long exact sequence in homotopy groups 
\begin{equation*}
\xymatrix{
\cdots\ar[r]& \pi_1(\Diff(M))\ar[r] &\pi_1(C_n(M))\ar[r]^{\delta} &\pi_0(\PDiff^n (M))\ar[r] &\pi_0(\Diff(M)) \ar[r] &1.}
\end{equation*}
If $\Diff (M)$ is simply connected, then the existence of the short exact sequence (\ref{BIRDIM>3}) follows. On the other hand, we may consider the map $$\psi: \pi_0(\PDiff^n (M))\rightarrow \text{Aut}[\pi_1(C_n(M))]$$ given by $[f]\mapsto [\gamma\mapsto f\circ\gamma].$ 

The composition
\begin{equation*}
\xymatrix{
\pi_1(C_n(M))\ar[r]^{\delta} &\pi_0(\PDiff^n (M))\ar[r]^{\psi} &\Aut[\pi_1(C_n(M))]}
\end{equation*}
sends $\sigma\in\pi_1(C_n(M))$ to the inner automorphism $c(\sigma)$ given by conjugation by $\sigma$. 
If the dimension $d\geq 3$ and $\pi_1(M)$ has trivial center, then so does $\pi_1(C_n(M))$ by Lemma \ref{PRODBIR}. In this case, the boundary map $\delta$ is injective and we get the desired Birman exact sequence (\ref{BIRDIM>3}).
\end{proof}

The $E_2$-page of the Hochschild-Serre spectral sequence associated to (\ref{BIRDIM>3}) is then
$$ E_2^{p,q}(n)=H^p\big(\Mod (M);H^q(\pi_1(C_n(M));\mathbb{Q})\big).$$

By Lemma \ref{PRODBIR} 
$$H^q(\pi_1(C_n(M));\mathbb{Q}))= H^q(\pi_1(M)^n;\mathbb{Q}).$$  

Moreover, by Proposition \ref{PRODUCT} below, if the group $\pi_1(M)$ is of type $FP_{\infty}$,  the consistent sequence $\{H^q(\pi_1(M)^n;\mathbb{Q})\}_{n=1}^{\infty}$ is monotone and uniformly representation stable, with stable range $n\geq 2q$. Hence when $\Mod(M)$ is also of type $FP_{\infty}$ (e.g. $M$ is compact),  Theorem \ref{GSTAB} and the same inductive argument on the succesive pages of spectral sequence yield the following:

\begin{lemma}\label{E2DIM>3} For every $i\geq 0$ and every $n\geq 2$, the consistent sequence of rational $S_n$-representations $$\{ E_2^{i-q,q}(n)=H^{i-q}\big(\Mod(M);H^q(\pi_1(C_n(M));\mathbb{Q})\big)\}_{n=1}^{\infty}$$ is monotone and uniformly representation stable with stable range $n\geq 2q$. Furthermore $E_{\infty}^{i-q,q}(n)=E_{i+2}^{i-q,q}(n)$, which is monotone and uniformly representation stable with stable range $$n\geq 2q+2(i+1)(i).$$
\end{lemma}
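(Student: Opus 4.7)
The plan is to mimic the inductive strategy of Lemmas \ref{Er} and \ref{Einf}, now fueled by Proposition \ref{PRODUCT} in place of Church's configuration-space theorem (Theorem \ref{CHURCH_RS}). The Birman exact sequence (\ref{BIRDIM>3}) provided by Lemma \ref{EXISTENCEBIR} plays the role that $(Bir1_n)$ plays in the surface case, and everything formal about the spectral sequence $E_*(n)$ transfers.

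For the base case at the $E_2$-page, Lemma \ref{PRODBIR} identifies $\pi_1(C_n(M))$ with $\pi_1(M)^n$, so $H^q(\pi_1(C_n(M));\mathbb{Q})\cong H^q(\pi_1(M)^n;\mathbb{Q})$. Since $\pi_1(M)$ is of type $FP_\infty$, Proposition \ref{PRODUCT} gives monotonicity and uniform representation stability of this sequence with stable range $n\geq 2q$. Running the same naturality arguments as in Propositions \ref{COMMUTE} and \ref{G-map}, applied to (\ref{BIRDIM>3}) and to the analogue of diagram (\ref{D2}), shows that the $S_n$-action on $H^q(\pi_1(C_n(M));\mathbb{Q})$ commutes with the induced $\Mod(M)$-action and that the forgetful maps $f_n^*$ are $\Mod(M)$-equivariant. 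Since $\Mod(M)$ is of type $FP_\infty$, Theorem \ref{GSTAB} applies and yields that $\{E_2^{p,q}(n)\}$ is monotone and uniformly representation stable with stable range $n\geq 2q$.

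For the induction on $r$, one adapts Proposition \ref{S_nSPECTRAL} via conjugation by lifts of $\sigma\in S_n$ inside $\Mod^n(M)$, so that each $E_r^{p,q}(n)$ carries an $S_n$-action under which the differentials $d_r^{p,q}(n)$ are $S_n$-equivariant; naturality under the forgetful map $f_n\colon\PMod^{n+1}(M)\to\PMod^n(M)$ then makes the differentials consistent sequences of maps in the sense of Proposition \ref{KER_IM}. Running the induction of Lemma \ref{Er} verbatim, but with base stable range $2q$ instead of $4q$, and combining Propositions \ref{KER_IM} and \ref{SUB_QUO} on kernels, images, and successive quotients, gives that $\{E_r^{p,q}(n)\}$ is monotone and uniformly representation stable with stable range $n\geq 2q+2(r-1)(r-2)$ for every $r\geq 2$.

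Finally, for the collapse $E_\infty^{i-q,q}(n)=E_{i+2}^{i-q,q}(n)$: the spectral sequence lies in the first quadrant, so for $r\geq i+2$ the outgoing differential from $E_r^{i-q,q}$ lands in bidegree $(i-q+r,q-r+1)$ with second coordinate $q-r+1\leq -1$, and the incoming differential originates in bidegree $(i-q-r,q+r-1)$ with first coordinate $i-q-r\leq -2$; both terms vanish, giving $E_\infty^{i-q,q}(n)=E_{i+2}^{i-q,q}(n)$. Plugging $r=i+2$ into the stable range from the previous paragraph yields the claimed bound $n\geq 2q+2(i+1)i$. The only nontrivial obstacle is the technical setup of the $S_n$-action on $E_*(n)$ and the compatibility diagrams for the higher-dimensional Birman sequence; once the analogue of the second column of (\ref{D1}) has been produced, the rest is a literal transcription of the surface-case argument.
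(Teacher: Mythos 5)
Your proposal is correct and follows essentially the same route as the paper, which proves this lemma by exactly the combination you describe: Lemma \ref{PRODBIR} plus Proposition \ref{PRODUCT} and Theorem \ref{GSTAB} for the $E_2$-page, the page-by-page induction of Lemma \ref{Er} transplanted to the Birman sequence (\ref{BIRDIM>3}), and a first-quadrant degree argument (rather than a vcd bound) to get $E_{\infty}^{i-q,q}(n)=E_{i+2}^{i-q,q}(n)$. Your handling of the $S_n$-action via conjugation by lifts of $\sigma$ (chosen, as in Proposition \ref{S_nSPECTRAL}, in the kernel of the forgetful map so the induced map on $\Mod(M)$ is the identity) and your stable-range bookkeeping both match the paper's statement.
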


Observe that now we have all the ingredients needed in order to reproduce our arguments from Section \ref{proof} and prove Theorem \ref{MAINDIM>3} and Corollary \ref{TWISTSTABDIM>3}.

\subsection{Representation stability of $H^*(G^n;\mathbb{Q})$}\label{Subsec5}
Given a group $G$, we may consider the sequence of groups $\{G^n=\prod_{i=1}^nG\}$ with the corresponding $S_n$-action given by permuting the factors. The natural homomorphism $G^{n+1}\rightarrow G^n$ by forgetting the last coordinate is equivariant with respect to the inclusion $S_n\hookrightarrow S_{n+1}$. For a fixed $q\geq 0$ the induced maps $$\phi_n\colon H^q(G^n;\mathbb{Q})\rightarrow H^q(G^{n+1};\mathbb{Q})$$ give us a consistent sequence of $S_n$-representations. If $G$ is of type $FP_{\infty}$, we have finite dimensional representations. Monotonicity and uniform representation stability of this sequence are a particular case of  \cite[Proposition 3.1]{CHURCH} (corresponding to the first row in the spectral sequence). Since this result gives us the inductive hypothesis for the proof of Theorem \ref{MAINDIM>3}, we present here a complete proof for the reader's convenience.
 
For a fixed $S_l$-representation $V$ and each $n\geq l$, we denote by $V_{\alpha}\boxtimes\mathbb{Q}$ the corresponding $(S_l\times S_{n-l})$-representation, where  the factor $S_{n-l}$ acts trivially. We can then consider the sequence of $S_n$-representation $\{\text{Ind}_{S_l\times S_{n-l}}^{S_n} V_{\alpha}\boxtimes\mathbb{Q}\}$ with the natural inclusions $$\iota_n\colon \text{Ind}_{S_l\times S_{n-l}}^{S_n} V_{\alpha}\boxtimes\mathbb{Q}\hookrightarrow \text{Ind}_{S_l\times S_{n+1-l}}^{S_{n+1}} V_{\alpha}\boxtimes\mathbb{Q}.$$ 
This sequence is monotone and uniform representation stable as proved in \cite[Theorem 2.11]{CHURCH}: 

\begin{lemma}\label{IND}
Let $V$ be a finite dimensional $S_l$-representation, then the sequence of induced representations $\{\text{Ind}_{S_l\times S_{n-l}}^{S_n} V\boxtimes\mathbb{Q}\}_{n=1}^{\infty}$ is monotone and uniformly representation stable for $n\geq 2l$. 
\end{lemma}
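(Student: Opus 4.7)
The plan is to explicitly decompose the induced representation using Pieri's rule and then verify the axioms of monotone representation stability directly on the resulting list of irreducibles.

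First, since induction commutes with direct sums, one reduces to the case $V = V_\mu$ for a single partition $\mu$ of $l$. Pieri's rule (the special case of the Littlewood--Richardson rule for inducing the trivial representation in the second factor) then gives
$$\text{Ind}_{S_l\times S_{n-l}}^{S_n}(V_\mu\boxtimes\mathbb{Q})\cong\bigoplus_\lambda V_\lambda,$$
the sum running over partitions $\lambda$ of $n$ with $\lambda\supset\mu$ and $\lambda/\mu$ a horizontal strip. Writing such a $\lambda$ as $(\lambda_1,\nu_1,\nu_2,\ldots)$, the horizontal-strip condition becomes $\mu_{i+1}\le\nu_i\le\mu_i$ for $i\ge 1$, together with $\lambda_1\ge\mu_1$. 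The tails $\nu = (\nu_1,\nu_2,\ldots)$ thus range over a finite set $\mathcal T(\mu)$ that depends only on $\mu$, and each satisfies $|\nu|\le l$ and $\nu_1\le\mu_1\le l$. For $n\ge 2l$ this yields $\lambda_1 = n-|\nu|\ge\mu_1\ge\nu_1$, so $\lambda = \nu[n]$ in the padded-partition notation and $V_\lambda = V(\nu)_n$. Hence
$$\text{Ind}_{S_l\times S_{n-l}}^{S_n}(V_\mu\boxtimes\mathbb{Q})\cong\bigoplus_{\nu\in\mathcal T(\mu)} V(\nu)_n\qquad(n\ge 2l),$$
giving uniform multiplicity stability with range $n\ge 2l$; in particular each $V(\nu)_n$ occurs with multiplicity $0$ or $1$ in this range.

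Next, the maps $\iota_n$ arise from the inclusion of $l$-element subsets of $\{1,\ldots,n\}$ into those of $\{1,\ldots,n+1\}$, once one realizes the induced representation as the $\mathbb{Q}$-vector space with basis indexed by $l$-subsets tensored with a copy of $V_\mu$ (modulo the $S_l$-action on an ordering). Injectivity of $\iota_n$ is then immediate, and condition II follows from transitivity of $S_{n+1}$ on $l$-subsets of $\{1,\ldots,n+1\}$. For monotonicity, since the Pieri decomposition has multiplicities in $\{0,1\}$, any $W\le V_n$ of type $V(\nu)_n^{\oplus k}$ has $k\le 1$; the case $k=0$ is trivial, and for $k=1$ the injectivity of $\iota_n$ forces $\iota_n(W)\ne 0$, while the branching rule (removing one box from the first row of $\nu[n+1]$ gives $\nu[n]$) combined with the $S_{n+1}$-transitivity on $l$-subsets shows that the $S_{n+1}$-span of $\iota_n(W)$ meets the $V(\nu)_{n+1}$-summand nontrivially and hence contains it.

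The main obstacle is this last step: certifying that $\iota_n(W)$ generates exactly the $V(\nu)_{n+1}$-isotypic piece, rather than being absorbed into other $S_{n+1}$-irreducibles whose restrictions to $S_n$ also contain a copy of $V(\nu)_n$. One resolves this either by exhibiting an explicit $V(\nu)_n$-generator (e.g.\ via a Young symmetrizer acting on a distinguished $l$-subset) and tracking its image, or by using the stable Pieri decomposition from the first paragraph together with Frobenius reciprocity to read off the unique copy of $V(\nu)_{n+1}$ in $V_{n+1}$ and match it with the $S_{n+1}$-submodule generated by $\iota_n(W)$.
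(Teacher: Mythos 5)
Note first that the paper does not prove this lemma at all: it is quoted verbatim from Church's monotonicity paper (Theorem 2.11 there), so your proposal is an attempt at an independent proof rather than a variant of an argument in the text. The stability half of your attempt is essentially correct: reducing to $V=V_\mu$, applying Pieri's rule, observing that for $n\geq 2l$ every constituent is $V(\nu)_n$ with $\nu$ in the fixed finite set $\mathcal{T}(\mu)$ and multiplicity one, and verifying conditions I and II via the $l$-subset model of the induced module and transitivity of $S_{n+1}$ on $l$-subsets. (One small tacit step: for monotonicity the reduction to irreducible $V_\mu$ uses that a direct sum of monotone sequences is monotone; this is true, but it is not automatic from ``induction commutes with direct sums'' and deserves a line.)

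The genuine gap is in the monotonicity step, and you flag it yourself. Since $V_{n+1}$ is multiplicity-free, the $S_{n+1}$-span of $\iota_n(W)$ contains $V(\nu)_{n+1}$ if and only if the projection of $\iota_n(W)$ onto that summand is nonzero, and nothing you write establishes this. Injectivity of $\iota_n$ plus the branching rule only show that a nonzero projection is \emph{possible}: by the same branching rule, $V(\nu)_n$ also occurs in $\mathrm{Res}_{S_n}V(\nu')_{n+1}$ for every $\nu'\in\mathcal{T}(\mu)$ obtained from $\nu$ by adding one box (matching first rows forces $|\nu'|=|\nu|+1$), so a priori $\iota_n(W)$ could be absorbed entirely into those other summands; transitivity on $l$-subsets gives condition II for the whole module but says nothing about where this particular isotypic piece lands. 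Already for $\mu=(1)$, so $V_n=\mathbb{Q}^n$, the claim for $W=V(0)_n$ amounts to checking that the image of the all-ones vector has nonzero trivial component in $\mathbb{Q}^{n+1}$ --- true, but only because of what $\iota_n$ concretely is, not because of branching or transitivity. Your closing paragraph names two strategies without carrying either out, and ``match it with the $S_{n+1}$-submodule generated by $\iota_n(W)$'' is a restatement of the claim to be proved. Since monotonicity is precisely the half of the lemma that goes beyond Church--Farb stability (and is the reason the paper cites Church's Theorem 2.11), the proof is incomplete as it stands: you must either track an explicit generator of $W$ (for instance the canonical Frobenius-reciprocity copy of $V_\mu$ and its translates) through $\iota_n$ and compute its projection, or fall back on the citation as the paper does.
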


This lemma and the K\"{u}nneth formula give us the following result.

\begin{prop}\label{PRODUCT} Let $G$ be any group of type $FP_{\infty}$ and $q\geq 0$. The consistent sequence of $S_n$-representations
$\{H^q(G^n;\mathbb{Q}),\phi_n\}_{n=1}^{\infty}$ is monotone and uniformly representation stable for $n\geq 2q$.
\end{prop}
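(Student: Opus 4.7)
The plan is to apply the Künneth formula to decompose $H^q(G^n;\mathbb{Q})$, organize the resulting direct sum according to how many tensor factors carry nontrivial cohomology degree, and then identify each piece with an induced representation so that Lemma \ref{IND} applies.

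First I would use the Künneth formula (valid since $G$ is of type $FP_\infty$, so each $H^j(G;\mathbb{Q})$ is finite-dimensional) to write
\begin{equation*}
H^q(G^n;\mathbb{Q}) \;\cong\; \bigoplus_{q_1+\cdots+q_n=q} H^{q_1}(G;\mathbb{Q})\otimes\cdots\otimes H^{q_n}(G;\mathbb{Q}),
\end{equation*}
with $S_n$ acting by permutation of the tensor factors. For each multi-index, let $l=|\{i:q_i>0\}|$ denote the cardinality of its support, and note that $l\leq q$ since each nonzero $q_i$ is at least $1$. Collecting summands by the value of $l$ and observing that $S_n$ acts transitively on the set of $l$-element subsets of $\{1,\ldots,n\}$ with stabilizer $S_l\times S_{n-l}$, I would identify the group of summands with support size $l$ as $\mathrm{Ind}_{S_l\times S_{n-l}}^{S_n}(V_l\boxtimes\mathbb{Q})$, where
\begin{equation*}
V_l := \bigoplus_{\substack{q_1+\cdots+q_l=q\\ q_i\geq 1}} H^{q_1}(G;\mathbb{Q})\otimes\cdots\otimes H^{q_l}(G;\mathbb{Q})
\end{equation*}
is a finite-dimensional $S_l$-representation (the sum is finite because $q_i\geq 1$ forces $l\leq q$). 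This yields the $S_n$-equivariant isomorphism
\begin{equation*}
H^q(G^n;\mathbb{Q}) \;\cong\; \bigoplus_{l=0}^{q} \mathrm{Ind}_{S_l\times S_{n-l}}^{S_n}(V_l\boxtimes\mathbb{Q}).
\end{equation*}

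Next I would check that the forgetful map $\phi_n$ corresponds, summand by summand, to the natural inclusion $\iota_n$ used in Lemma \ref{IND}. Tracing through Künneth, the projection $G^{n+1}\to G^n$ onto the first $n$ coordinates sends a summand $H^{q_1}\otimes\cdots\otimes H^{q_n}$ of $H^q(G^n;\mathbb{Q})$ to $H^{q_1}\otimes\cdots\otimes H^{q_n}\otimes H^0(G;\mathbb{Q})$ in $H^q(G^{n+1};\mathbb{Q})$; this is precisely the inclusion $\iota_n$ that enlarges the ambient symmetric group while leaving the support and its representation unchanged. By Lemma \ref{IND}, each sequence $\{\mathrm{Ind}_{S_l\times S_{n-l}}^{S_n}(V_l\boxtimes\mathbb{Q}),\iota_n\}$ is monotone and uniformly representation stable for $n\geq 2l$, and since $l$ ranges only up to $q$, each summand is stable for $n\geq 2q$.

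Finally, since monotonicity and uniform representation stability are preserved under finite direct sums (this follows from the converse direction of Proposition \ref{SUB_QUO}, viewing each summand as a subrepresentation with complementary quotient), the full sequence $\{H^q(G^n;\mathbb{Q}),\phi_n\}_{n=1}^\infty$ is monotone and uniformly representation stable for $n\geq 2q$. The main bookkeeping obstacle is the identification of the Künneth summands, grouped by support size, with the induced representations $\mathrm{Ind}_{S_l\times S_{n-l}}^{S_n}(V_l\boxtimes\mathbb{Q})$ in a way compatible with $\phi_n$; once this is in place, the result is an immediate consequence of Lemma \ref{IND}.
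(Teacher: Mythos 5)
Your proposal is correct and takes essentially the same route as the paper: K\"unneth decomposition, identification of the grouped summands as representations induced from $S_l\times S_{n-l}$, Lemma \ref{IND}, and Proposition \ref{SUB_QUO}, with the same range $n\geq 2l\leq 2q$. The only (harmless) difference is that you group the K\"unneth summands by support size $l$ rather than by the partition $\alpha$ of $q$, which lets you bypass the paper's intermediate induction from the stabilizer subgroup $H<S_l$; just note that the $S_l$-action on your $V_l$ should carry the Koszul signs coming from graded commutativity, as the paper remarks.
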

\begin{proof}
By the K\"{u}nneth formula we have $$H^q(G^n;\mathbb{Q})\approx\bigoplus_{\mathfrak{a}} H^{\mathfrak{a}}(G^n)$$ where the sum is over all tuples $\mathfrak{a}=(a_1,\ldots, a_n)$ such that $a_j\geq 0$ and $\sum a_j=q$ and $H^{\mathfrak{a}}(G^n)$ denotes $H^{a_1}(G;\mathbb{Q})\otimes\cdots\otimes H^{a_n}(G;\mathbb{Q}).$

Let $\overline{\mathfrak{a}}=\alpha$ where $\alpha=(\alpha_1\geq \alpha_2\geq\ldots\geq\alpha_l)$ is a partition of $q$ and the $\alpha_j$ are the positive values of $\mathfrak{a}$ arranged in decreasing order. We define $\text{supp}(\mathfrak{a})$ as the subset of $\{1,2,\ldots, n\}$ for which $a_i\neq0$. Observe that the length of $\alpha$ is $l=\vline \text{supp}(\mathfrak{a})\vline\leq q$. Therefore we have
$$H^q(G^n;\mathbb{Q})= \bigoplus_{\alpha} H^{\alpha}(G^n)$$
where now the sum is over all partitions $\alpha$ of $q$ of length $l\leq q$ and $H^{\alpha}(G^n)=\bigoplus_{\overline{\mathfrak{a}}=\alpha} H^{\mathfrak{a}}(G^n)$.

The natural $S_n$-action on $G^n$ induces an $S_n$-action on $H^q(G^n;\mathbb{Q})$. More precisely, the group $S_n$ acts on $n$-tuples $\mathfrak{a}$ by permuting the coordinates. This induces an action on $\bigoplus_{\overline{\mathfrak{a}}=\alpha} H^{\mathfrak{a}}(G^n)$ by permuting the summands accordingly (with a sign, since cohomology is graded commutative) . Hence, under this action, each  $H^{\alpha}(G^n)$ is $S_n$-invariant. We now describe  $H^{\alpha}(G^n)$ as an induced representation.

For a given $\alpha$, take $\mathfrak{b}=(\alpha_1,\ldots,\alpha_l,0,\cdots,0)$.  Observe that we can identify the $S_n$-translates of $H^{\mathfrak{b}}(G^n)$ with the cosets $S_n/\text{Stab}(\mathfrak{b})$ by an orbit-stabilizer argument. Thus
$$H^{\alpha}(G^n)=\text{Ind}_{\text{Stab}(\mathfrak{b})}^{S_n} H^{\mathfrak{b}}(G^n).$$
Moreover, $S_{n-l}<\text{Stab}(\mathfrak{b})<S_l\times S_{n-l}$, where $S_l$ permutes coordinates $\{1,\ldots ,l\}$ and $S_{n-l}$ permutes coordinates $\{l+1,\ldots, n\}$. Therefore $\text{Stab}(\mathfrak{b})=H\times S_{n-l}$, for some subgroup  $H<S_l$.

Notice that $$H^{\mathfrak{b}}(G^n)=H^{b_1}(G;\mathbb{Q})\otimes\cdots\otimes H^{b_l}(G;\mathbb{Q})\otimes\cdots\otimes H^0(G;\mathbb{Q})\approx H^{b_1}(G;\mathbb{Q})\otimes\cdots\otimes H^{b_l}(G;\mathbb{Q})$$ can be regarded as an $H$-representation. 

Let $V_{\alpha}:= \text{Ind}_{H}^{S_l} H^{\mathfrak{b}}(G^n)$ and let $V_{\alpha}\boxtimes\mathbb{Q}$ denote the corresponding $(S_l\times S_{n-l})$-representation. Then

$$\hspace{27 mm}H^{\alpha}(G^n)=\text{Ind}_{\text{Stab}(\mathfrak{b})}^{S_n} H^{\mathfrak{b}}(G^n)
=  \text{Ind}_{H\times S_{n-l}}^{S_n} \Big(H^{\mathfrak{b}}(G^n)\boxtimes\mathbb{Q}\Big)$$
$$\hspace{29 mm}= \text{Ind}_{S_l\times S_{n-l}}^{S_n} 
\Big(\text{Ind}_{H\times S_{n-l}}^{S_l\times S_{n-l}} \big( H^{\mathfrak{b}}(G^n)\boxtimes\mathbb{Q}\big)\Big)$$
$$\hspace{21 mm}=\text{Ind}_{S_l\times S_{n-l}}^{S_n} \Big(\big(\text{Ind}_{H}^{S_l} H^{\mathfrak{b}}(G^n)\big)\boxtimes\mathbb{Q}\Big)$$
$$=\text{Ind}_{S_l\times S_{n-l}}^{S_n} V_{\alpha}\boxtimes\mathbb{Q}\text{ .} $$

Moreover, we notice that the forgetful map $\phi_n$ restricted to the summand $H^{\alpha}(G^n)$ corresponds to the inclusion $$\text{Ind}_{S_l\times S_{n-l}}^{S_n} V_{\alpha}\boxtimes\mathbb{Q}\hookrightarrow \text{Ind}_{S_l\times S_{n+1-l}}^{S_{n+1}} V_{\alpha}\boxtimes\mathbb{Q}.$$ Therefore, by  Lemma \ref{IND}, the consistent sequence $\{H^{\alpha}(G^n)\}$ is monotone and uniformly representation stable with stable range $n\geq 2l$, where $l$ is the length of $\alpha$ and $l\leq q$. The result for $\{H^q(G^n;\mathbb{Q}),\phi_n\}$ then follows from Proposition \ref{SUB_QUO}.
\end{proof}

We illustrate the notation in the previous proof with the concrete case of $G=\mathbb{Z}$.
 
By the K\"{u}nneth formula we have $$H^q(\mathbb{Z}^n;\mathbb{Q})\approx\bigoplus_{\sum a_i=q} H^{a_1}(\mathbb{Z};\mathbb{Q})\otimes\cdots\otimes H^{a_n}(\mathbb{Z};\mathbb{Q}).$$

Following our previous notation we take the $n$-tuple $\mathfrak{b}=(1,\ldots,1,0,\ldots,0)$ with $\vline \text{supp}(\mathfrak{b})\vline= q$ and $\alpha:=\overline{\mathfrak{b}}$. Since $H^q(\mathbb{Z};\mathbb{Q})=\mathbb{Q}$ for $q=0,1$ and zero otherwise,  we have that
$$H^q(\mathbb{Z}^n;\mathbb{Q})= \bigoplus_{\overline{\mathfrak{a}}=\alpha} H^{\mathfrak{a}}(\mathbb{Z}^n)=\text{Ind}_{\text{Stab}(\mathfrak{b})}^{S_n} H^{\mathfrak{b}}(\mathbb{Z}^n).$$

\noindent Notice that $\text{Stab}(\mathfrak{b})=S_q\times S_{n-q}$. The corresponding  $(S_q\times S_{n-q})$-representation is 

$$ H^{\mathfrak{b}}(\mathbb{Z}^n)=H^{1}(\mathbb{Z};\mathbb{Q})\otimes\cdots\otimes H^{1}(\mathbb{Z};\mathbb{Q})\otimes\cdots\otimes H^0(\mathbb{Z};\mathbb{Q})\approx V_{\alpha}\boxtimes\mathbb{Q} $$

\noindent where   $V_{\alpha}:= H^{1}(\mathbb{Z};\mathbb{Q})\otimes\cdots\otimes H^{1}(\mathbb{Z};\mathbb{Q})\approx H^{\mathfrak{b}}(\mathbb{Z}^n)$ is regarded as an $S_q$-representation.
Then, as an induced representation,

$$H^q(\mathbb{Z}^n;\mathbb{Q})=\text{Ind}_{S_q\times S_{n-q}}^{S_n}V_{\alpha}\boxtimes\mathbb{Q}.$$
 
Moreover, if $\mathbb{Q}^n$  denotes the permutation $S_n$-representation, then
$$\text{Ind}_{S_q\times S_{n-q}}^{S_n}V_{\alpha}\boxtimes\mathbb{Q}={\bigwedge}^q (\mathbb{Q}^n)={\bigwedge}^q \big(V(0)_n\oplus V(1)_n\big)$$ 
$$\hspace{28mm} =\Big({\bigwedge}^q V(1)_n\Big)\oplus\Big({\bigwedge}^{q-1} V(1)_n\Big)$$
$$\hspace{25mm}= V(\underbrace{1,\ldots,1}_{q})_n\oplus V(\underbrace{1,\ldots,1}_{q-1})_n $$

\noindent Hence, we see explicitly how uniform multiplicity stability holds for this particular case.

\section{Classifying spaces for diffeomorphism groups}\label{LAST}

In this last section we see how the same ideas also imply  representation stability for the cohomology of classifying spaces for diffeomorphism groups.

Let $M$ be a connected and compact smooth manifold of dimension $d\geq 3$. We denote by $\mathcal{E}(M,\mathbb{R}^\infty)$ the space of smooth embeddings $M\rightarrow \mathbb{R}^{\infty}$. It is a contractible space and $\Diff(M\text{ rel }\partial M)$ acts freely by pre-composition. The quotient space $\mathcal{E}(M,\mathbb{R}^\infty)/ \Diff(M\text{ rel }\partial M)$ is %the space of smooth submanifolds of $\mathbb{R}^{\infty}$ diffeomorphic to $M$ and it is 
a classifying space $\BDiff(M\text{ rel }\partial M))$ for the group $\Diff(M\text{ rel }\partial M))$. Similarly we can consider the action of the subgroup $\PDiff^n(M)$ of  $\Diff(M\text{ rel }\partial M)$ (defined in the Introduction) on  $\mathcal{E}(M,\mathbb{R}^\infty)$. The quotient space is a classifying space $\BPDiff^n(M)$ for $\PDiff^n(M)$ and we have a fiber bundle
\begin{equation}\label{FIB}
\BPDiff^n(M)\rightarrow \BDiff(M\text{ rel }\partial M)
\end{equation}
\noindent where the fiber is given by $\Diff(M\text{ rel }\partial M)/\PDiff^n(M) \approx C_n(M)$, the configuration space of $n$ ordered points in $M$.

On the other hand we can consider the forgetful homomorphism $\PDiff^{n+1}(M)\rightarrow \PDiff^n(M)$, which induces a corresponding map between classifying spaces $$f_n: \BPDiff^{n+1}(M)\rightarrow \BPDiff^n(M).$$

There is a Leray-Serre spectral sequence associated to the fiber bundle (\ref{FIB}) that converges to the cohomology $H^*(\BPDiff^n(M);\mathbb{Q})$ with $E_2$-page given by
\begin{equation}\label{LERAY}
 E_2^{p,q}(n)=H^p\big(\BDiff(M\text{ rel }\partial M);H^q(C_n(M);\mathbb{Q})\big).
\end{equation}

\noindent Here, we regard (\ref{LERAY}) as the $p$th cohomology group of $\BDiff(M\text{ rel }\partial M)$ with local coefficients in the $G$-module $H^q(C_n(M);\mathbb{Q})$, where $G=\pi_1(\BDiff(M\text{ rel }\partial M))$ (see \cite[Section 3.H]{HATCHER}). Notice that the actions of $S_n$ and $G$ on $H^q(C_n(M);\mathbb{Q})$ commute. Therefore  $\{H^q(C_n(M);\mathbb{Q})\}_{n=1}^{\infty}$ is a consistent sequence compatible with $G$-actions. Moreover, by  Theorem \ref{CHURCH_RS}, it is monotone and uniformly representation stable, with stable range $n\geq 2q$. Monotonicity and uniform representation stability for the terms in the $E_2$-page will be a consequence of the following result, which is essentially Theorem \ref{GSTAB} from before.

\begin{theo}[Representation stability with changing coefficients 2]\label{GSTAB2} Let $G$ be the fundamental group of a connected CW complex $X$ with finitely many cells in each dimension. Consider a consistent sequence $\{V_n,\phi_n\}_{n=1}^{\infty}$ of finite dimensional rational representations of $S_n$ compatible with $G$-actions. If the sequence $\{V_n,\phi_n\}_{n=1}^{\infty}$ is monotone and uniformly representation stable with stable range $n\geq N$, then for any non-negative integer $p$, the sequence of cohomology groups with local coefficients $\{H^p(X;V_n), \phi_n^*\}_{n=1}^{\infty}$ is monotone and uniformly representation stable with the same stable range.
\end{theo}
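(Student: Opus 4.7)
The plan is to mimic the proof of Theorem \ref{GSTAB} essentially verbatim, replacing the projective resolution of $\mathbb{Z}$ over $\mathbb{Z}G$ used there by the cellular chain complex of the universal cover of $X$. Let $\tilde{X}\to X$ be the universal cover and write $C_*(\tilde{X})$ for its cellular chain complex. The fundamental group $G=\pi_1(X)$ acts freely and cellularly on $\tilde{X}$, so each $C_p(\tilde{X})$ is a free $\mathbb{Z}G$-module; the hypothesis that $X$ has finitely many cells in each dimension ensures that each $C_p(\tilde{X})$ has finite rank, say $C_p(\tilde{X})\cong(\mathbb{Z}G)^{d_p}$, with generators $x_1,\ldots,x_{d_p}$ given by lifts of the $p$-cells of $X$.

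By definition, cohomology of $X$ with local coefficients in the $G$-module $V_n$ is computed by the cochain complex $\mathcal{H}om_G(C_*(\tilde{X}),V_n)$ (see \cite[Section 3.H]{HATCHER}). Just as in the proof of Theorem \ref{GSTAB}, since the $G$- and $S_n$-actions on $V_n$ commute, the diagonal $S_n$-action $\sigma\cdot h\colon x\mapsto\sigma\cdot h(x)$ on $\mathcal{H}om(C_*(\tilde{X}),V_n)$ restricts to a well-defined $S_n$-action on $\mathcal{H}om_G(C_*(\tilde{X}),V_n)$, and evaluation on the chosen $\mathbb{Z}G$-basis $x_1,\ldots,x_{d_p}$ gives an isomorphism of $S_n$-representations
$$\mathcal{H}om_G(C_p(\tilde{X}),V_n)\cong V_n^{\oplus d_p},$$
under which the map induced by $\phi_n$ corresponds to $(\phi_n)^{\oplus d_p}$. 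In particular, each term of the cochain complex is a finite-dimensional rational $S_n$-representation, and by Proposition \ref{SUB_QUO} the sequence $\{\mathcal{H}om_G(C_p(\tilde{X}),V_n)\}$ is monotone and uniformly representation stable for $n\geq N$.

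From here the argument is word-for-word the end of the proof of Theorem \ref{GSTAB}. The differentials of the cochain complex $\mathcal{H}om_G(C_*(\tilde{X}),V_n)$ commute with the maps induced by $\phi_n$, so they form a consistent sequence of $S_n$-equivariant maps. Proposition \ref{KER_IM} then shows that the kernels and images of these differentials are monotone and uniformly representation stable for $n\geq N$, and Proposition \ref{SUB_QUO} applied to $H^p(X;V_n)$, viewed as the corresponding subquotient, yields the result. The only new content beyond the proof of Theorem \ref{GSTAB} is the identification of cohomology with local coefficients as the cohomology of $\mathcal{H}om_G(C_*(\tilde{X}),-)$, with the finite-type CW hypothesis on $X$ playing the role of the $FP_{\infty}$ condition on $G$; no serious obstacle arises.
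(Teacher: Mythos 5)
Your proposal is correct and is essentially the paper's own argument: the paper likewise replaces the free resolution in the proof of Theorem \ref{GSTAB} by the cellular chain complex of the universal cover $\tilde{X}$, using the finite-type CW hypothesis to get free $\mathbb{Z}G$-modules of finite rank with a preferred basis of lifted cells, and then runs the same argument via Propositions \ref{SUB_QUO} and \ref{KER_IM}. Your write-up just makes explicit the details the paper leaves to the reader.
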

\begin{proof}
Since $G=\pi_1(X)$, the universal cover $\tilde{X}$ of $X$  has a $G$-equivariant cellular chain complex. Given that $X$ has finitely many cells in each dimension, for each $p$ the group  $C_p(\tilde{X})$ is a free $G$-module of finite rank, where a preferred $G$-basis can be provided by selecting a $p$-cell in $\tilde{X}$ over each $p$-cell in $X$. Hence, the proof of Theorem \ref{GSTAB2} is the same as the one for Theorem \ref{GSTAB}, by replacing the notions of cohomology of groups by cohomology of a space with local coefficients. 
\end{proof}

Hence when $\BDiff(M \text{ rel } \partial M)$ has the homotopy type of a CW-complex with finitely many cells in each dimension, we can apply the inductive argument from Section \ref{proof} on the successive pages of the Leray-Serre spectral sequence from above and obtain the following result.

\begin{lemma}\label{E2DIM>3} For every $i\geq 0$ and every $n\geq 2$, the consistent sequence of rational $S_n$-representations $$\{ E_2^{i-q,q}(n)=H^{i-q}\big(\BDiff(M\text{ rel }\partial M);H^q(C_n(M);\mathbb{Q})\big)\}_{n=1}^{\infty}$$ is monotone and uniformly representation stable with stable range $n\geq 2q$. Furthermore $E_{\infty}^{i-q,q}(n)=E_{i+2}^{i-q,q}(n)$, which is monotone and uniformly representation stable with stable range $$n\geq 2q+2(i+1)(i).$$
\end{lemma}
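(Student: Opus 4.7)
The plan is to mirror the two-stage structure used for the earlier Lemma with the same label in Section \ref{Mod_M}, replacing the Hochschild--Serre spectral sequence with the Leray--Serre spectral sequence of (\ref{FIB}) and using Theorem \ref{GSTAB2} in place of Theorem \ref{GSTAB}. The starting point is Theorem \ref{CHURCH_RS} of Church: since $\dim M\geq 3$, the consistent sequence $\{H^q(C_n(M);\mathbb{Q})\}_{n=1}^{\infty}$ with the forgetful maps is monotone and uniformly representation stable with stable range $n\geq 2q$.

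For the base case $r=2$, I need to verify that $\{H^q(C_n(M);\mathbb{Q})\}$ is a consistent sequence compatible with the action of $G=\pi_1(\BDiff(M\text{ rel }\partial M))$. The $S_n$-action permutes the labels of the marked points, while the $G$-action comes from monodromy in the fiber bundle (\ref{FIB}); since the deck transformation permuting the marked points is induced by a covering of the configuration-space fiber and commutes with the monodromy, the two actions commute (the argument is entirely parallel to Proposition \ref{COMMUTE}, with (\ref{FIB}) replacing the Birman exact sequence). The hypothesis on $\BDiff(M\text{ rel }\partial M)$ is precisely what Theorem \ref{GSTAB2} requires, so applying it to each fixed cohomological degree $i-q$ gives that $\{E_2^{i-q,q}(n)\}_n$ is monotone and uniformly representation stable with stable range $n\geq 2q$.

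For the remainder, I would induct on $r$ to promote stability through the pages, following verbatim the argument of Lemma \ref{Er}. Naturality of the Leray--Serre spectral sequence applied to $f_n\colon \BPDiff^{n+1}(M)\to\BPDiff^n(M)$ provides $S_n$-equivariant consistent maps $(f_n)^*_r\colon E_r^{p,q}(n)\to E_r^{p,q}(n+1)$ commuting with the differentials. Proposition \ref{KER_IM} then gives monotonicity and uniform representation stability for the kernels $\ker d_r^{p,q}(n)$ and the images $\operatorname{im}d_r^{p-r,q+r-1}(n)$, and Proposition \ref{SUB_QUO} applied to the quotient yields stability for $E_{r+1}^{p,q}(n)$, with the stable range increasing by $2(r-1)$ at each step (since the source of the incoming differential lies $r-1$ rows higher and the base range grows linearly in $q$ with slope $2$). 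The collapse $E_\infty^{i-q,q}(n)=E_{i+2}^{i-q,q}(n)$ is a purely formal first-quadrant observation: the outgoing differential $d_r\colon E_r^{i-q,q}\to E_r^{i-q+r,q-r+1}$ vanishes for $r\geq q+2$, and the incoming differential from $E_r^{i-q-r,q+r-1}$ vanishes for $r\geq i-q+1$, so both vanish once $r\geq i+2$.

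The only step that demands real care is the compatibility input for Theorem \ref{GSTAB2}: one must ensure that the coefficient system $H^q(C_n(M);\mathbb{Q})$ on $\BDiff(M\text{ rel }\partial M)$ genuinely underlies a consistent sequence compatible with $G$-actions, and that the forgetful map $f_n$ induces at the $E_2$-level the map on cohomology with local coefficients determined by the sequence $\{\phi_n\}$ of $G$-equivariant $S_n$-maps between the $H^q(C_n(M);\mathbb{Q})$. Once this bookkeeping is in place, everything else is an automatic replay of the arguments already developed for the Hochschild--Serre setting, and tracking the stable range through the induction delivers the asserted bound $n\geq 2q+2(i+1)i$ at page $r=i+2$.
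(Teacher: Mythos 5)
Your proposal follows essentially the same route as the paper: Church's theorem plus Theorem \ref{GSTAB2} (with the commuting $S_n$- and $G$-actions on $H^q(C_n(M);\mathbb{Q})$) for the $E_2$-page, then the naturality of the spectral sequence under $f_n$ together with Propositions \ref{KER_IM} and \ref{SUB_QUO} to run the page-by-page induction of Lemma \ref{Er}, and the formal first-quadrant collapse at page $i+2$. Your bookkeeping of the stable range in fact yields $n\geq 2q+i(i+1)$, which is sharper than and hence implies the stated bound $n\geq 2q+2(i+1)i$, so the argument is correct.
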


As a consequence we get Theorem \ref{MAINBDIFF>3} for the cohomology of the classifying space of a group of diffeomorphisms.

If the manifold $M$ is orientable, we can replace $\Diff(M \text{ rel } \partial M)$ by the group of orientation-preserving diffeomorphims $\Diff^+(M \text{ rel } \partial M)$ in the above argument. In particular, Hatcher and McCullough proved in \cite{HATCHER_MC} that if $M$ is an irreducible, compact connected orientable $3$-manifold with nonempty boundary, then  $\BDiff^+(M\text{ rel }\partial M)$ is a finite K$(\pi, 1)$-space for the mapping class group $\Mod(M)$. Therefore, Theorem \ref{MAINBDIFF>3} is true for this type of manifold. Moreover, if $M$ satisfies conditions (i)-(iv) in \cite[Section 3]{HATCHER_MC}, then $\pi_1(M)$ is centerless and we can apply Theorem  \ref{MAINDIM>3} to get uniform representation stability for the cohomology of $\PMod^n(M)$.       
\bibliographystyle{amsalpha}
\bibliography{referPureMCG}\bigskip\bigskip

%\begin{small}
%\noindent Department of Mathematics\\
%University of Chicago\\
%5734 University Ave.\\
%Chicago, IL 60637\\
%E-mail: \texttt{\href{mailto:atir83@math.uchicago.edu}{atir83@math.uchicago.edu}}
%\end{small}

 \end{document}